\theoremstyle{plain}
\newtheorem{stthm}{Theorem}[section]
\numberwithin{equation}{section}
\numberwithin{figure}{section}
\numberwithin{table}{section}
\def\newrefformat#1#2{%
  \@namedef{pr@#1}##1{#2}}
\def\prettyref#1{\@prettyref#1|}
\def\@prettyref#1|#2|{%
  \expandafter\ifx\csname pr@#1\endcsname\relax%
    \PackageWarning{prettyref}{Reference format #1\space undefined}%
    \ref{#1|#2}%
  \else%
    \csname pr@#1\endcsname{#1|#2}%
  \fi%
}
  \def\AS{\\ }
  \def\SU{\substack}
\def\Compatamsartbabel#1#2{%
\def\rightmark{#1}%
\def\leftmark{#2}%
}
 \theoremstyle{definition}
 \newtheorem{sddefn}{Definition}[section]
 \theoremstyle{remark}
 \newtheorem*{rem*}{Remark}
 \theoremstyle{plain}
 \newtheorem{sllem}{Lemma}[section]
 \theoremstyle{plain}
 \newtheorem*{lem*}{Lemma}
\begin{document}

\Compatamsartbabel{SOLUTIONS OF THE $\bar\partial$-EQUATION FOR LINEALLY CONVEX DOMAINS AND ZEROS SETS OF THE NEVANLINNA CLASS}{P. CHARPENTIER, Y. DUPAIN \& M. MOUNKAILA}

\title{Estimates for Solutions of the $\bar{\partial}$-equation and Application
to the Characterization of the zero varieties of the Functions of
the Nevanlinna class for lineally convex domains of Finite Type}

\author{Philippe Charpentier, Yves Dupain \& Modi Mounkaila}
\begin{abstract}
In the late ten years, the resolution of the equation $\bar{\partial}u=f$
with sharp estimates has been intensively studied for convex domains
of finite type in $\mathbb{C}^{n}$ by many authors. Generally they
used kernels constructed with holomorphic support function satisfying
{}``good'' global estimates. In this paper, we consider the case
of lineally convex domains. Unfortunately, the method used to obtain
global estimates for the support function cannot be carried out in
that case. Then we use a kernel that does not gives directly a solution
of the $\bar{\partial}$-equation but only a representation formula
which allows us to end the resolution of the equation using Kohn's
$L^{2}$ theory.

As an application we give the characterization of the zero sets of
the functions of the Nevanlinna class for lineally convex domains of finite
type.
\end{abstract}

\keywords{lineally convex, finite type, $\bar{\partial}$-equation, Nevanlinna
class}

\subjclass[2000]{32F17, 32T25, 32T40}

\address{P. Charpentier \& Y. Dupain, Université Bordeaux I, Institut de Mathématiques
de Bordeaux, 351, Cours de la Libération, 33405, Talence, France}

\address{M. Mounkaila, Université Abdou Moumouni, Faculté des Sciences, B.P.
10662, Niamey, Niger}

\email{P. Charpentier: philippe.charpentier@math.u-bordeaux1.fr}

\email{Y. Dupain: yves.dupain@math.u-bordeaux1.fr}

\email{M. Mounkaila: modi.mounkaila@yahoo.fr}

\maketitle

\section{\label{sec|Introduction}Introduction}

The general notion of extremal basis and the class of {}``geometrically
separated'' domains has been introduced in \cite{CD08}. For such
domains it is proved that if there exist {}``good'' plurisubharmonic
functions, in which case the domains are called {}``completely geometrically
separated'', then sharp estimates on the Bergman and Szegö projections
and on the classical invariants metrics can be obtained.

Moreover, using the description of the complex geometry of lineally
convex domains of finite type initiated in \cite{Conrad_lineally_convex}
and the construction of a local support function described in \cite{Diederich-Fornaess-Support-Func-lineally-cvx},
it is shown, already in \cite{CD08}, that every lineally convex domain
of finite type is completely geometrically separated.

The present paper is a continuation of the study of complex analysis
in such domains. We are now interested in the problem of the characterization
of the zero sets of functions in the Nevanlinna class. The main result
obtained concerns the class of lineally convex domains of finite type
and generalizes the results obtained in the case of convex domains
(\cite{Bruna-Charp-Dupain-Annals,Cumenge-Navanlinna-convex,DM01}):
\begin{stthm}
\label{thm|zeros-Nevalinna}Let $\Omega$ be a bounded lineally convex
domain of finite type in $\mathbb{C}^{n}$ with smooth boundary. Then
a divisor in $\Omega$ can be defined by a function of the Nevanlinna
class of $\Omega$ if and only if it satisfies the Blaschke condition.
\end{stthm}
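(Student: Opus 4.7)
The necessity is a generality about smooth bounded pseudoconvex domains and does not use the finite type or lineal convexity hypotheses. I plan to apply a Jensen--Stoll formula to a smooth defining function $\rho$ of $\Omega$ and to $\log|f|$: for $f\in N(\Omega)$ with divisor $X$, this yields
\[
\int_{X}\delta(w)\,d\lambda_{2n-2}(w)\;\le\;C\,\sup_{\varepsilon>0}\int_{\partial\Omega_{\varepsilon}}\log^{+}|f|\,d\sigma_{\varepsilon}\,<\,\infty,
\]
so that $X$ satisfies the Blaschke condition.

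For the sufficiency I intend to follow the Skoda--Henkin strategy. Since $\Omega$ is Stein, any divisor $X$ is principal: pick a holomorphic $g$ with $(g)=X$, so that $dd^{c}\log|g|=[X]$. The problem reduces to constructing a second real function $V$ on $\Omega$ such that $dd^{c}V=[X]$ in the sense of currents and such that $V^{+}$ is uniformly integrable on the level sets $\partial\Omega_{\varepsilon}$. Granting such a $V$, the difference $V-\log|g|$ is pluriharmonic on $\Omega$; since every bounded lineally convex domain in $\mathbb{C}^{n}$ is contractible, this function equals $\operatorname{Re}h$ for some $h\in\mathcal{O}(\Omega)$, and $f:=ge^{h}$ then satisfies $\log|f|=V$, hence lies in $N(\Omega)$ and has divisor exactly $X$.

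The construction of $V$ will be performed through an explicit integral representation
\[
V(z)\;=\;\int_{X}N(z,w)\,d\lambda_{2n-2}(w),
\]
with a kernel $N(z,w)$ modelled on the local holomorphic support function of \cite{Diederich-Fornaess-Support-Func-lineally-cvx} together with the anisotropic pseudo-distance produced by the extremal bases of \cite{CD08}. I will require two properties of $N$: it must carry the Lelong singularity on the diagonal that forces $dd^{c}V=[X]$, and it must satisfy the weighted integrability bound $\int_{\partial\Omega}|N(z,w)|\,d\sigma(z)\le C\,\delta(w)$, which converts the Blaschke assumption on $X$ directly into the desired $L^{1}$ bound for $V^{+}$ on $\partial\Omega$.

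The principal obstacle, exactly the one signalled in the abstract, is that the support function of \cite{Diederich-Fornaess-Support-Func-lineally-cvx} is only local and does not carry the global lower estimates available in the convex case. Consequently, $N$ will not yield a genuine potential for $[X]$ on the first try; it will only furnish a homotopy-type representation formula modulo a smooth remainder. I plan to absorb this remainder by solving an auxiliary $\bar{\partial}$-equation with the sharp anisotropic estimates that form the technical body of the present paper, closing the argument with Kohn's $L^{2}$-theory on the pseudoconvex domain $\Omega$. Once $V$ has been produced with the announced boundary behaviour, the pluriharmonic decomposition above delivers the Nevanlinna function $f$ realising the divisor $X$, completing the proof.
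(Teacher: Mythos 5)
Your high-level scheme (necessity via Jensen--Stoll; sufficiency via a function $V$ with $i\partial\bar\partial V=[X]$ and $V\in L^{1}(\partial\Omega)$, followed by the pluriharmonic decomposition $V-\log|g|=\operatorname{Re}h$) is exactly the classical Henkin--Skoda scheme that the paper also follows, and the idea of absorbing a smooth remainder coming from the merely local support function by Kohn's theory is indeed what Section 2 of the paper does for the $\bar\partial$-step. However, there is a real gap in the way you propose to go from the Blaschke hypothesis to the $L^{1}(\partial\Omega)$ bound. You ask for a single kernel $N(z,w)$ carrying the Lelong singularity and satisfying $\int_{\partial\Omega}|N(z,w)|\,d\sigma(z)\le C\,\delta(w)$. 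In a finite-type setting, any kernel of this type naturally produces estimates in the anisotropic norm $\|\cdot\|_{k}$ built from the radii $\tau_{i}(z,\varepsilon)$, not in the Euclidean norm, so the bound you need would read $\int_{\partial\Omega}|V|\,d\sigma\lesssim\int_{\Omega}\delta_{\partial\Omega}\|\Theta\|_{k}$, which is \emph{not} the Blaschke condition: the Blaschke condition only controls $\int_{\Omega}\delta_{\partial\Omega}\|\Theta\|_{E}$. Bridging this requires the nonisotropic estimate on closed positive currents
\[
\int_{\Omega}\delta_{\partial\Omega}(z)\left\Vert \Theta(z)\right\Vert _{k}dV(z)\leq C\int_{\Omega}\delta_{\partial\Omega}(z)\left\Vert \Theta(z)\right\Vert _{E}dV(z),
\]
(the ``Malliavin condition'', \prettyref{thm|theorem-global-estimate-currents}), whose proof depends essentially on the positivity and closedness of $\Theta$ and on a Stokes argument over a decomposition of $\Omega$ into anisotropic polydiscs. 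Your kernel proposal makes no use of positivity at all, so it cannot produce this estimate as a by-product, and without it the boundary bound you announce for $N$ cannot hold for general $(z,w)$ in a finite-type domain.

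A second, related omission is the $d$-equation. The actual route to $i\partial\bar\partial V=[X]$ is: first solve $dw=\Theta$ with $\int\|w\|_{k}\lesssim\int\delta_{\partial\Omega}\|\Theta\|_{k}$ (done in \prettyref{sec|Non-isotropic-estimates-for-d} via a Poincar\'e homotopy and the radius estimates of \prettyref{lem|lemma-1.1} and \prettyref{lem|lemma-1.2}), then solve $\bar\partial v=w^{0,1}$ with $\int_{\partial\Omega}|v|\,d\sigma\lesssim\int\|w^{0,1}\|_{k}$ (this is \prettyref{thm|L1-estimate_boundary_d-bar}, where the local support function and Kohn's theory enter), and take $V$ to be the appropriate real combination of $v$. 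Your proposal compresses these two resolutions into a single unspecified $N(z,w)$, which is where the work is being hidden. Finally, a small correction: you invoke ``contractibility'' of bounded lineally convex domains; the paper only uses $H^{2}(\Omega,\mathbb{C})=0$, which is what \cite{Conrad_lineally_convex} establishes and what is needed both to make $[X]$ exact (so $dw=\Theta$ has a global solution) and to carry out the Cousin-II/pluriharmonic step, so you should verify that weaker topological fact rather than assert contractibility.
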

The general scheme of the proof is identical to the one used in the
convex case and consists in three steps. First, for the general case
of geometrically separated domains, we prove some {}``Malliavin conditions''
on closed positive $\left(1,1\right)$-currents $\Theta$ and then
we solve the equation $dw=\Theta$ with good estimates. The third
step, which solves the $\bar{\partial}$-equation for $\left(0,1\right)$-form with
$L^{1}$ estimates on the boundary, is only done in the case of lineally convex domains
of finite type:
\begin{stthm}
\label{thm|general-d-bar-resolution}Let $\Omega$ be a bounded lineally
convex domain of finite type in $\mathbb{C}^{n}$ with smooth boundary.
Let $f$ be a $\left(0,1\right)$-form in $\Omega$ whose coefficients are
$\mathcal{C}^1(\overline\Omega)$ functions and which is $\bar{\partial}$-closed. Then there
exists a solution of the equation $\overline\partial u=f$, smooth on $\Omega$ and continuous
on $\overline\Omega$ such that $\left\Vert u\right\Vert_{L^1(\partial\Omega)}\leq C|\!|\!|f|\!|\!|_{k}$
(see \prettyref{sec|Geometry-and-local-support-function} formula \prettyref{eq|k-morm-measure}), the constant $C$ depending only on $\Omega$. In other words, there exists a
solution of the equation $\overline{\partial_{b}}u=f$, in the sense of \cite{Sko76},
in $L^{1}(\partial\Omega)$.
\end{stthm}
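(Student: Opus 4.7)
The plan is to follow the three-step template used in the convex case while compensating for the absence of a global holomorphic support function on a lineally convex domain. Concretely, first I would build an explicit integral solution operator from the local Diederich-Fornaess support function, yielding a representation formula with a smooth error term; then I would estimate that operator in $L^{1}(\partial\Omega)$ against $|\!|\!|f|\!|\!|_{k}$; finally I would clean up the smooth error by Kohn's $L^{2}$ theory and pass to the boundary trace to recover the Skoda-type equation $\bar\partial_{b}u=f$.

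The first step is to construct a Berndtsson-Andersson / Cauchy-Fantappi\`e-Leray type kernel $K(\zeta,z)$ from the local support function of \cite{Diederich-Fornaess-Support-Func-lineally-cvx}, patch the local pieces via a partition of unity along $\partial\Omega$, and obtain integral operators $T_{q}$ on $(0,q)$-forms satisfying a homotopy identity
\[
 f \;=\; \bar\partial(T_{q}f) \;+\; T_{q+1}(\bar\partial f) \;+\; Rf,
\]
where $R$ has a smooth kernel arising from the cut-off commutators and from the regions where the local support function is no longer valid. For $\bar\partial$-closed measure-valued $f$ this reduces to $f = \bar\partial(T_{q}f) + Rf$. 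The heart of the proof is then the bound $\|T_{q}f\|_{L^{1}(\partial\Omega)}\lesssim |\!|\!|f|\!|\!|_{k}$. Here the finite-type lineally convex structure enters decisively: one combines the anisotropic polydiscs of \cite{Conrad_lineally_convex}, the extremal bases of \cite{CD08} and the sharp pointwise estimates of \cite{Diederich-Fornaess-Support-Func-lineally-cvx} to control the size of $K$ by anisotropic pseudo-distance factors, and a Fubini/covering argument reduces the matter to integrating the mass of $f$ against non-isotropic boundary balls, which is exactly what $|\!|\!|\cdot|\!|\!|_{k}$ measures. This is the main obstacle, since the pointwise kernel estimates and the definition of the norm must dovetail so that $K$ lies in the Carleson / tent-type space dual to $|\!|\!|\cdot|\!|\!|_{k}$; in particular the fact that only a local support function is available forces one to work very carefully at the transition between the pieces of the partition of unity.

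To conclude, the remainder $Rf$ is smooth and $\bar\partial$-closed on $\Omega$, so Kohn's $L^{2}$ theory on the bounded pseudoconvex lineally convex domain $\Omega$ supplies a smooth solution $v$ of $\bar\partial v = Rf$ (smoothness up to the boundary gives in particular $v\in L^{1}(\partial\Omega)$). Setting $u := T_{q}f + v$, one gets $u\in L^{1}(\partial\Omega)$ with $\bar\partial u = f$ on $\Omega$ in the sense of currents. Taking boundary traces then translates this equation into $\bar\partial_{b}u=f$ on $\partial\Omega$ in the sense of \cite{Sko76}, completing the argument.
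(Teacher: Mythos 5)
Your proposal follows essentially the same route as the paper: a Berndtsson--Andersson kernel built from the local Diederich--Fornaess support function, truncated where the local estimates fail (the paper uses a single cutoff $\chi(z,\zeta)$ switching to the Euclidean kernel rather than a partition of unity, but to the same effect), yielding a representation formula with a smooth remainder that is removed via Kohn's $L^2$ theory, plus the key $L^1(\partial\Omega)$ estimate of the main kernel term against $|\!|\!|f|\!|\!|_k$ via anisotropic polydisc estimates. The only step you pass over is that the paper first establishes the boundary $L^1$ estimate for smooth $\bar\partial$-closed forms (Theorem \ref{thm|L1-estimate_boundary_d-bar}) and then deduces the measure-coefficient statement by a standard regularization in the sense of Skoda, using property \eqref{geometry-5} of Section \ref{sec|Geometry-of-lineally-convex}.
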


\section{\label{sec|resolution-d-bar-equation}Solutions for the $\bar{\partial}$-equation
for lineally convex domains of finite type}

First of all, we recall the definition of lineally convex domain:
\begin{sddefn}
A domain $\Omega$ in $\mathbb{C}^{n}$, with smooth boundary is said
to be lineally convex at a point $p\in\partial\Omega$ if there exists
a neighborhood $W$ of $p$ such that, for all point $z\in\partial\Omega\cap W$,\[
\left(z+T_{z}^{10}\right)\cap(D\cap W)=\emptyset,\]
where $T_{z}^{10}$ is the holomorphic tangent space to $\partial\Omega$
at the point $z$.
\end{sddefn}
Furthermore, we always suppose that $\partial\Omega$ is of finite
type at every point of $\partial\Omega\cap W$. Shrinking $W$ if
necessary, we may assume that there exists a $\mathcal{C}^{\infty}$defining
function $\rho$ for $\Omega$ and a number $\eta_{0}>0$ such that
$\nabla\rho(z)\neq0$ at every point of $W$ and the level sets $\left\{ z\in W\mbox{ such that }\rho(z)=\eta\right\} $,
$-\eta_{0}\leq\eta\leq\eta_{0}$, are lineally convex of finite type.

As we want to obtain global results, we need these properties at every
boundary point. Thus, in all our work, by {}``lineally convex domain''
we mean a bounded smooth domain having a (global) defining function
satisfying the previous hypothesis at every point of $\partial\Omega$.

\medskip{}

In \prettyref{sec|Geometry-and-local-support-function} we define
a punctual anisotropic norm for forms, $\left\Vert .\right\Vert _{k}$,
related to the geometry of the domain (formula \prettyref{eq|anisotropic_distance_for_0-q_forms}).
With this notation, the main goal of this Section is to prove the
following reformulation of \prettyref{thm|general-d-bar-resolution} for
$\left(0,q\right)$-forms:
\begin{stthm}
\label{thm|L1-estimate_boundary_d-bar}Let $\Omega$ be a smooth bounded
lineally convex domain of finite type in $\mathbb{C}^{n}$. Then there
exists a constant $C>0$ such that, for any smooth $\bar{\partial}$-closed
$\left(0,q\right)$-form $f$, $1\leq q\leq n-1$, on $\overline{\Omega}$
there exists a solution $u$ of the equation $\overline{\partial}u=f$,
continuous on $\overline{\Omega}$, such that
\[\int_{\partial\Omega}\left|u(z)\right|d\sigma(z)\leq C\int_{\Omega}\left\Vert f(z)\right\Vert _{k}dV(z).\]

\end{stthm}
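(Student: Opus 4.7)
The plan is to build an integral representation of Berndtsson--Andersson / Koppelman type from the local Diederich--Fornaess holomorphic support function for lineally convex domains of finite type, use this representation to write $f = \bar{\partial}(Tf) + Rf$ with $Tf$ admitting the desired $L^{1}(\partial\Omega)$-estimate and $Rf$ smooth and $\bar{\partial}$-closed, and then absorb $Rf$ through Kohn's $L^{2}$ theory.

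First I fix a defining function $\rho$ and a finite covering of $\partial\Omega$ by neighborhoods on which the local support function $S(z,\zeta)$ of \cite{Diederich-Fornaess-Support-Func-lineally-cvx} is available. On each such neighborhood $S$ is holomorphic in $z$ and satisfies an anisotropic lower bound of the form
\[
\bigl|S(z,\zeta)\bigr| \;\gtrsim\; |\rho(\zeta)| + |\rho(z)| + d_{k}(z,\zeta),
\]
where $d_{k}$ is the pseudo-distance attached to the extremal basis adapted to $\rho$, as constructed in \cite{CD08}. A partition of unity subordinate to this covering glues the local Leray sections into a global smooth section $s(z,\zeta)$ defined near the diagonal; away from the diagonal one completes by the Bochner--Martinelli section. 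Plugging $s$ into the Berndtsson--Andersson construction with a large holomorphic weight factor provides a kernel $K(z,\zeta)$ satisfying the Koppelman identity
\[
f(z) \;=\; \bar{\partial}_{z}(Tf)(z) + (Rf)(z), \qquad (Tf)(z) = \int_{\Omega} K(z,\zeta) \wedge f(\zeta),
\]
where $Rf$ collects the commutators coming from the cut-offs and from the Bochner--Martinelli completion, and is therefore smooth on $\overline{\Omega}$ and $\bar{\partial}$-closed.

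The main estimate, and the principal obstacle, is the $L^{1}(\partial\Omega)$ bound
\[
\int_{\partial\Omega} |Tf(z)|\, d\sigma(z) \;\leq\; C \int_{\Omega} \left\Vert f(\zeta)\right\Vert_{k} dV(\zeta).
\]
By Fubini this reduces to a uniform kernel estimate, for $\zeta \in \Omega$ fixed, of the dual anisotropic norm of $K(\cdot,\zeta)$ integrated over $\partial\Omega$. To establish it, I decompose $\partial\Omega$ dyadically into the McNeal-type polydiscs associated to the extremal basis at $\zeta$, namely shells of the form $\{z \in \partial\Omega : 2^{j}\tau_{k}(\zeta,2^{j}|\rho(\zeta)|) \le d_{k}(z,\zeta) < 2^{j+1}\tau_{k}(\zeta,\dots)\}$, and use the support-function lower bound above to control each factor of $\langle s,\zeta-z\rangle$ appearing in $K$. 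The volume of each polydisc cancels against the kernel's anisotropic decay in the correct directions, and the large weight exponent forces summability in $j$; this is the technical heart of the proof and is precisely where geometric separation and the complex geometry of lineally convex domains developed in \cite{CD08} and \cite{Conrad_lineally_convex} are used.

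Once $Tf$ is controlled, $Rf$ is smooth and $\bar{\partial}$-closed on $\overline{\Omega}$, so Kohn's $L^{2}$ theory (combined with the finite-type subelliptic estimates) furnishes a smooth solution $v$ of $\bar{\partial}v = Rf$ on $\overline{\Omega}$; in particular $v$ is bounded on $\partial\Omega$. Setting $u = Tf + v$ produces a smooth solution of $\bar{\partial}u = f$ with the required $L^{1}$ boundary bound, since $\|v\|_{L^{1}(\partial\Omega)} \lesssim \|Rf\|_{L^{2}(\Omega)} \lesssim \|f\|_{L^{2}(\Omega)} \lesssim \int_{\Omega}\|f\|_{k}dV$ thanks to the trivial domination of $\|\cdot\|_{L^{2}}$ by the $k$-norm on a bounded domain. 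The main obstacle therefore lies entirely in the kernel estimate of the third step: contrary to the convex case, the absence of a global support function with good global estimates forces the cut-off construction, and the anisotropic control of $K$ near $\partial\Omega$ must be extracted by dyadic summation on McNeal polydiscs rather than from a single clean global inequality.
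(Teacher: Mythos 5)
Your approach is essentially the one the paper uses: the Diederich--Fornaess local support function, a Berndtsson--Andersson kernel built from it after a cut-off, a dyadic McNeal-polydisc estimate of the main kernel, and absorption of the remainder via Kohn's theory. Two points, however, need correction.

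First, your final chain of inequalities is broken where you claim
\[
\left\Vert f\right\Vert _{L^{2}(\Omega)}\lesssim\int_{\Omega}\left\Vert f(\zeta)\right\Vert _{k}\, dV(\zeta).
\]
The right-hand side is an $L^{1}$-type quantity; it cannot dominate an $L^{2}$-norm (scale a bump of height $\varepsilon^{-n}$ on a ball of radius $\varepsilon$). The correct route does not pass through $L^{2}$ of $f$ at all: since $P(z,\zeta)$ is $\mathcal{C}^{\infty}(\overline{\Omega}\times\overline{\Omega})$, one has $\left\Vert Rf\right\Vert _{W^{s}(\Omega)}\lesssim_{s}\left\Vert f\right\Vert _{L^{1}(\Omega)}$ for every $s$, and Kohn's $\bar{\partial}$-Neumann theory then yields $v$ with $\left\Vert v\right\Vert _{W^{s}(\Omega)}\lesssim_{s}\left\Vert f\right\Vert _{L^{1}(\Omega)}$; a trace inequality finishes the $L^{1}(\partial\Omega)$ bound. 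Finally $\left\Vert f\right\Vert _{L^{1}(\Omega)}\lesssim\int_{\Omega}\left\Vert f\right\Vert _{k}dV$ follows from the pointwise domination $\left\Vert f(z)\right\Vert _{k}\gtrsim\left|f(z)\right|$ (because $k(z,v)=\delta_{\partial\Omega}(z)/\tau(z,v,\delta_{\partial\Omega}(z))\lesssim1$).

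Second, your description of the kernel is ambiguous where it matters most. You speak of ``gluing local Leray sections'' with a partition of unity and of a ``large holomorphic weight factor,'' but the whole difficulty here is that the weight form $Q$ built from the cut-off support function $S=\chi S_{0}-(1-\chi)\left|z-\zeta\right|^{2}$ is \emph{not} holomorphic in $z$ outside a neighborhood of the diagonal. Consequently the extra term $P(z,\zeta)\sim G^{(n)}(\cdot)(dQ)^{n}$ in the Koppelman identity does not vanish; it must be shown smooth up to the boundary. The argument is not that it ``collects commutators'' but rather the following dichotomy: where $Q$ is holomorphic in $z$ (i.e.\ $\left|z-\zeta\right|<R/2$ and $\zeta$ near $\partial\Omega$), the $(0,q)$-in-$z$ component of $(dQ)^{n}$ vanishes identically for $q\geq1$, so $P\equiv0$ there; elsewhere $\left|z-\zeta\right|\geq R/2$ or $\zeta$ is far from the boundary, and the lower bound from the support-function lemma gives $\left|\frac{1}{K_{0}}S+\rho(\zeta)\right|\gtrsim R^{2m}$, so $P$ and all its $z$-derivatives are bounded. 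Without this observation the claim that $Rf$ is smooth on $\overline{\Omega}$ and absorbable by Kohn has no justification. You should also note, as in Skoda, that the uniform integrability estimate $\int_{\Omega\cap P_{\varepsilon}(z)}\left|K^{1}\right|dV=\mathrm{O}(\varepsilon^{1/m})$ is what allows the Koppelman representation, a priori valid only for $z\in\Omega$, to pass to boundary values.
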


Except for the case of finite type domains in $\mathbb{C}^{2}$ where
such an estimate was proved by D. C. Chang, A. Nagel and E. M. Stein
(\cite{Chang-Nagel-Stein}) for the $\bar{\partial}$-Neumann problem,
this kind of result was always proved using explicit kernels solving
the $\bar{\partial}$-equation. The first result was obtained, independently,
by G. M. Henkin and H. Skoda for strictly pseudo-convex domains (\cite{Hen75,Sko76}).
Afterward, some generalizations to special pseudo-convex domains of
finite type (in dimension $n\geq3$) were obtained by several authors.
For example, the case of complex ellipsoids was obtained by A. Bonami
and Ph. Charpentier (\cite{BC82}), and, probably the most notable
result, the case of convex domains of finite type by J. Bruna, Ph.
Charpentier \& Y. Dupain, A. Cumenge and K. Diederich \& E. Mazzilli
(\cite{Bruna-Charp-Dupain-Annals,Cumenge-Navanlinna-convex,DM01}).

Here, we consider the more general case of lineally convex domains
of finite type. Our starting point is similar to the one used in \cite{Cumenge-Navanlinna-convex}
and \cite{DM01}. We try to construct a kernel solving the $\bar{\partial}$-equation
following the method described in the classical paper of B. Berndtsson
and M. Andersson \cite{BA82}. Such kernel is constructed using two
forms, $s(z,\zeta)$ and $Q(z,\zeta)$ satisfying some conditions.
In particular $Q(z,\zeta)$ is supposed to be holomorphic in $z$.
In many works using these constructions, the forms $s$ and $Q$ (or
only $Q$) are constructed using a holomorphic support function for
the domain. In the case of lineally convex domains of finite type
such support functions have been constructed by K. Diederich and J.
E. Fornaess in \cite{Diederich-Fornaess-Support-Func-lineally-cvx}.
Let us denote by $S_{0}=\sum Q_{i}^{0}(z,\zeta)\left(z_{i}-\zeta_{i}\right)$
this support function. If we want to use $S_{0}$ to define $s$ and/or
$Q$, a problem appears immediately: some precise global estimates
of $S_{0}$ are necessary since this function appears in the denominators
of the kernels, and Diederich and Fornaess result gives only local
estimates (i.e. when the two points $z$ and $\zeta$ are close and
close to the boundary of the domain). This problem has been noticed
previously in the case of convex domains by W. Alexandre in \cite{Ale01}
where a modification of the support function is done. Unfortunately,
this modification cannot apply for lineally convex domains, the convexity
being strongly used to solve a division problem with estimates. Another
way to construct a kernel without support function, introduced by
A. Cumenge for convex domains in \cite{Cumenge-Navanlinna-convex},
is to use the Bergman kernel with the estimates obtained in \cite{McNeal-convexes-94}.
These needed estimates on the Bergman kernel have been obtained for
lineally convex domains in \cite{CD08} but, once again, the method
cannot be carried out for lineally convex domains for the same reason.

Thus we start with the method of Berndtsson and Andersson with $Q$
constructed with $S_{0}$, but the form $Q$ being holomorphic in
$z$ only when the two points $z$ and $\zeta$ are close (and close
to the boundary). Then the construction does not give a kernel solving
the $\bar{\partial}$-equation but a representation formula of the
following form: if $f$ is a $\left(0,q\right)$-form smooth in $\bar{\Omega}$,
there exist kernels $K(z,\zeta)$, $K_{1}(z,\zeta)$ and $P(z,\zeta)$
such that\[
f(z)=\bar{\partial}\left(\int_{\Omega}f(\zeta)\wedge K(z,\zeta)\right)+\int_{\Omega}\bar{\partial}f(\zeta)\wedge K_{1}(z,\zeta)+\int_{\Omega}f(\zeta)\wedge P(z,\zeta).\]
In this formula one important point is that, by construction, the
kernel $P$ is $\mathcal{C}^{\infty}\left(\bar{\Omega}\times\bar{\Omega}\right)$.
If $f$ is $\bar{\partial}$-closed then the form $g=\int_{\Omega}f(\zeta)\wedge P(z,\zeta)$
is also $\bar{\partial}$-closed, and, by the regularity of $P$ (\prettyref{lem|P-C-infinity}),
for all integer $r$, the Sobolev norm $\left\Vert g\right\Vert _{W^{r}}$
of order $r$ is controlled by $C_{r}\left\Vert f\right\Vert _{L^{1}(\Omega)}$.
Then, using Kohn's theory (\cite{Kohn-defining-function}), it is possible,
using a linear operator, to solve the equation $\bar{\partial}v=g$ with an estimate
of the form $\left\Vert v\right\Vert _{W^{r}}\leq C_{r}\left\Vert f\right\Vert _{L^{1}(\Omega)}$ (\prettyref{lem|Sob-Est-P}).
By Sobolev Lemma, choosing $r$ sufficiently large (depending only on the dimension),
there exists a constant $C$ such that, if $f\in L^{1}(\Omega)$, this solution
$v$ is continuous on $\overline{\Omega}$ and $\left\Vert v \right\Vert _{L^{1}(\partial\Omega)}\leq C\left\Vert f\right\Vert _{L^{1}(\Omega)}$.  
Finally, to obtain a solution of the equation $\bar{\partial}u=f$, given by a linear operator,
satisfying the desired estimate it suffices to estimate the integral $\int_{\Omega}f(\zeta)\wedge K(z,\zeta)$
which can be done, as we will see, using only the local estimates
of the support function $S_{0}$ given in \cite{Diederich-Fornaess-Support-Func-lineally-cvx}.

\subsection{Geometry and local support function\label{sec|Geometry-and-local-support-function}}

\subsubsection{Geometry of lineally convex domains of finite type\label{sec|Geometry-of-lineally-convex}}

Adapting the construction made by J. McNeal for convex domains (\cite{McNeal-convexes-94})
to the case of lineally convex domains of finite type, M. Conrad defined,
in \cite{Conrad_lineally_convex}, the geometry of these domains and,
in particular, the notion of extremal basis in this context (note
that in his construction the basis are not maximal but minimal, see
\cite{Hef04,NPT09} for more details). Here we will only recall the
results which are useful for our purpose. A more detailed exposition
is given in \cite{Diederich-Fischer_Holder-linally-convex}.

For $\zeta$ close to $\partial\Omega$ and $\varepsilon\leq\varepsilon_{0}$,
$\varepsilon_{0}$ small, define, for all unitary vector $v$,\[
\tau\left(\zeta,v,\varepsilon\right)=\sup\left\{ c\mbox{ such that }\left|\rho\left(\zeta+\lambda v\right)-\rho(\zeta)\right|<\varepsilon,\,\forall\lambda\in\mathbb{C},\,\left|\lambda\right|<c\right\} .\]
Note that, if $v$ is tangent to the level set of $\rho$ passing
through $\zeta$, $\tau\left(\zeta,v,\varepsilon\right)\gtrsim\varepsilon^{\nicefrac{1}{2}}$
(with uniform constant in $\zeta$, $v$ and $\varepsilon$) and that,
$\Omega$ being of finite type $\leq2m$, $\tau\left(\zeta,v,\varepsilon\right)\lesssim\varepsilon^{\nicefrac{1}{2m}}$.

Let $\zeta$ and $\varepsilon$ be fixed. Then, an orthonormal basis
$\left(v_{1},v_{2},\ldots,v_{n}\right)$ is called \emph{$\left(\zeta,\varepsilon\right)$-extremal}
(or $\varepsilon$-\emph{extremal}, or simply \emph{extremal}) if
$v_{1}$ is the complex normal (to $\rho$) at $\zeta$, and, for
$i>1$, $v_{i}$ belongs to the orthogonal space of the vector space
generated by $\left(v_{1},\ldots,v_{i-1}\right)$ and minimizes $\tau\left(\zeta,v,\varepsilon\right)$
in that space. In association to this extremal basis, we denote\[
\tau(\zeta,v_{i},\varepsilon)=\tau_{i}(\zeta,\varepsilon).\]

Note that there may exist many \emph{$\left(\zeta,\varepsilon\right)$}-extremal
bases but they all give the same geometry we recall now.

With these notations, one defines polydiscs $AP_{\varepsilon}(\zeta)$
by\[
AP_{\varepsilon}(\zeta)=\left\{ z=\zeta+\sum_{k=1}^{n}\lambda_{k}v_{k}\mbox{ such that }\left|\lambda_{k}\right|\leq c_{0}A\tau_{k}(\zeta,\varepsilon)\right\} ,\]
$c_{0}$ depending on $\Omega$, $P_{\varepsilon}(\zeta)$ being the
corresponding polydisc with $A=1$ and we also define\[
d(\zeta,z)=\inf\left\{ \varepsilon\mbox{ such that }z\in P_{\varepsilon}(\zeta)\right\} .\]
The fundamental result here is that $d$ is a pseudo-distance which
means that, $\forall\alpha>0$, there exist constants $c(\alpha)$
and $C(\alpha)$ such that \begin{equation}
c(\alpha)P_{\varepsilon}(\zeta)\subset P_{\alpha\varepsilon}(\zeta)\subset C(\alpha)P_{\varepsilon}(\zeta)\mbox{ and }P_{c(\alpha)\varepsilon}(\zeta)\subset\alpha P_{\varepsilon}(\zeta)\subset P_{C(\alpha)\varepsilon}(\zeta).\label{eq|polydiscs-pseudodistance}\end{equation}
We insist on the fact that this pseudodistance is well defined and
is independent of the choice of the extremal bases.

We will make use of the following properties:
\begin{enumerate}
\item \label{geometry-1}Let $w=\left(w_{1},\ldots,w_{n}\right)$ be an
orthonormal system of coordinates centered at $\zeta$. Then\[
\left|\frac{\partial^{\left|\alpha+\beta\right|}\rho(\zeta)}{\partial w^{\alpha}\partial\bar{w}^{\beta}}\right|\lesssim\frac{\varepsilon}{\prod_{i}\tau\left(\zeta,w_{i},\varepsilon\right)^{\alpha_{i}+\beta_{i}}},\,\left|\alpha+\beta\right|\geq1.\]

\item \label{geometry-2}Let $\nu$ be a unit vector. Let $a_{\alpha\beta}^{\nu}(\zeta)=\frac{\partial^{\alpha+\beta}\rho}{\partial\lambda^{\alpha}\partial\bar{\lambda}^{\beta}}\left(\zeta+\lambda\nu\right)_{|\lambda=0}$.
Then\[
\sum_{1\leq\left|\alpha+\beta\right|\leq2m}\left|a_{\alpha\beta}^{\nu}(\zeta)\right|\tau(\zeta,\nu,\varepsilon)^{\alpha+\beta}\simeq\varepsilon.\]

\item \label{geometry-3}If $\left(v_{1},\ldots,v_{n}\right)$ is a $\left(\zeta,\varepsilon\right)$-extremal
basis and $\gamma=\sum_{1}^{n}a_{j}v_{j}\neq0$, then\[
\frac{1}{\tau(\zeta,\gamma,\varepsilon)}\simeq\sum_{j=1}^{n}\frac{\left|a_{j}\right|}{\tau_{j}(\zeta,\varepsilon)}.\]

\item \label{geometry-4}If $v$ is a unit vector then:

\begin{enumerate}
\item $z=\zeta+\lambda v\in P_{\delta}(\zeta)$ implies $\left|\lambda\right|\lesssim\tau(\zeta,v,\delta)$,
\item $z=\zeta+\lambda v$ with $\left|\lambda\right|\leq\tau(\zeta,v,\delta)$
implies $z\in CP_{\delta}(\zeta)$.
\end{enumerate}
\item \label{geometry-5}$\tau_{1}(\zeta,\varepsilon)=\varepsilon$, and,
for $j>1$ and $\lambda\geq1$, $\lambda^{\nicefrac{1}{m}}\tau_{j}(\zeta,\varepsilon)\lesssim\tau_{j}(\zeta,\lambda\varepsilon)\lesssim\lambda^{\nicefrac{1}{2}}\tau_{j}(\zeta,\varepsilon)$.\end{enumerate}
\begin{rem*}
Every lineally convex domain of finite type is completely geometrically
separated and the pseudo-distance defined here is equivalent to the
one defined in \cite{CD08} using tangent complex vector fields (see
Section 7.1 of \cite{CD08} for some details).

\medskip{}

With these notations, we define a punctual anisotropic norm $\left\Vert .\right\Vert _{k}$
for $\left(0,q\right)$-forms with functions coefficients $f$ by\begin{equation}
\left\Vert f(z)\right\Vert _{k}=\sup_{\left\Vert v_{i}\right\Vert =1}\frac{\left|\left\langle f;v_{1},\ldots,v_{q}\right\rangle (z)\right|}{\sum_{i=1}^{q}k\left(z,v_{i}\right)},\label{eq|anisotropic_distance_for_0-q_forms}\end{equation}
where $k\left(z,v\right)=\frac{\delta_{\partial\Omega}(z)}{\tau\left(z,v,\delta_{\partial\Omega}(z)\right)}$,
$\delta_{\partial\Omega}(z)$ being the distance of $z$ to the boundary
of $\Omega$. Note that this definition generalizes the definition
given in \cite{CD08} for $\left(0,1\right)$-forms. Moreover, in
the coordinate system associated to an $\left(z,\delta(z)\right)$-extremal
basis, we have $\left\Vert d\bar{z}^{I}\right\Vert _{k}\simeq\min_{i\in I}\frac{\tau_{i}\left(z,\delta_{\partial\Omega}(z)\right)}{\delta_{\partial\Omega}(z)}$,
and, if $f=\sum_{I}a_{I}d\bar{z}^{I}$, \[
\left\Vert f\right\Vert _{k}\simeq\sup_{I}\left|a_{I}\right|\min_{i\in I}\frac{\tau_{i}\left(z,\delta_{\partial\Omega}(z)\right)}{\delta_{\partial\Omega}(z)}.\]

If $f$ is a $\left(0,q\right)$-form with continuous coefficients, $\left\Vert f(z)\right\Vert_k$ is also continuous and we
define it's $|\!|\!|.|\!|\!|_{k}$ norm by\begin{equation}
|\!|\!|f|\!|\!|_{k}=\int_{\Omega}\left\Vert f(z)\right\Vert_kdV(z).\label{eq|k-morm-measure}\end{equation}
\end{rem*}

\subsubsection{The holomorphic support function\label{sec|The-holomorphic-support-function}}

In \cite{Diederich-Fornaess-Support-Func-lineally-cvx} the following
result is proved:
\begin{stthm}
[K. Diederich \& J. E. Fornaess]\label{thm|Die-For_support_function}Let
$\Omega$ be a bounded lineally convex domain in $\mathbb{C}^{n}$
of finite type $2m$ with $\mathcal{C}^{\infty}$ boundary. Then there
exist a neighborhood $W_{0}$ of the boundary of $\Omega$ and, for
any $\varepsilon>0$ small enough a function $S_{0}(z,\zeta)\in\mathcal{C}^{\infty}\left(\mathbb{C}^{n},W_{0}\right)$
which is a holomorphic polynomial of degree $2m$ in $z$ for any
$\zeta\in W_{0}$ fixed, such that $S_{0}(\zeta,\zeta)=0$, satisfying
the following precise properties:

Let $M$, $K>0$ be chosen sufficiently large and $\varepsilon>0$
sufficiently small. Choose $l_{\zeta}$ a family of affine unitary
transformations on $W_{0}$ translating $\zeta$ to $0$ and rotating
the complex normal $n_{\zeta}$ to $\rho$ at $\zeta$ to the vector
$\left(1,0,\ldots,0\right)$. Then there exists, on $W_{0}$, a family
of holomorphic polynomials $A_{\zeta}$, $A_{\zeta}(0)=0$, such that,
if $\Phi_{\zeta}$ is defined by $\Phi_{\zeta}^{-1}(z)_{1}=z_{1}\left(1-A_{\zeta}(z)\right)$,
$\Phi_{\zeta}^{-1}(z)_{k}=z_{k}$, $k=2,\ldots,n$, then\begin{equation}
S_{0}\left(l_{\zeta}\circ\Phi_{\zeta}(\xi),\zeta\right)=\xi_{1}+K\xi_{1}^{2}-\varepsilon\sum_{j=2}^{2m}M^{2^{j}}\sigma_{j}\sum_{\SU{\left|\alpha\right|=j\AS\alpha=\left(0,\alpha_{1},\ldots,\alpha_{n}\right)}}\frac{1}{\alpha!}\frac{\partial\rho_{\zeta}(0)}{\partial\xi^{\alpha}}\xi^{\alpha}\label{eq|definition-formula-supp-func}\end{equation}
where $\rho_{\zeta}(\xi)=\rho\left(l_{\zeta}\circ\Phi_{\zeta}(\xi)\right)-\rho(\zeta)$
and\[
\sigma_{j}=\left\{ \begin{array}{ll}
1 & \mbox{for }j=0\mbox{ mod }4\\
-1 & \mbox{for }j=2\mbox{ mod }4\\
0 & \mbox{otherwise}\end{array}\right..\]

Moreover, there exist $d=d(\varepsilon)>0$ and $c>0$ such that,
if $n_{\zeta}$ is the unit real exterior normal to $\rho$ at $\zeta$,
for $\left(w_{1},w_{2}\right)\in\mathbb{C}^{2}$ and $t$ a unit vector
in the holomorphic tangent space to $\rho$ at $\zeta$, for $\left|w\right|<d$,
the following estimate holds\begin{equation}
\Re\mathrm{e}S_{0}\left(\zeta+w_{1}n_{\zeta}+w_{2}t,\zeta\right)\leq\left[\rho\left(\zeta+w_{1}n_{\zeta}+w_{2}t\right)-\rho(\zeta)\right]h\left(\zeta+w_{1}n_{\zeta}+w_{2}t\right)-\varepsilon c\sum_{j=2}^{n}\left\Vert P_{\zeta,t}^{j}\right\Vert \left|w_{2}\right|^{j},\label{eq|local-estimate-supp-func}\end{equation}
where $h$ is a positive function bounded away from $0$, $P_{\zeta,t}^{j}(w)=P_{\zeta}^{j}\left(\zeta+w_{1}n_{\zeta}+w_{2}t\right)$,
with\[
P_{\zeta}^{j}(z)=\sum_{\left|\alpha\right|+\left|\beta\right|=j}\frac{1}{\alpha!\beta!}\frac{\partial^{j}\rho(\zeta)}{\partial z^{\alpha}\partial\bar{z}^{\beta}}\left(z-\zeta\right)^{\alpha}\left(\bar{z}-\bar{\zeta}\right)^{\beta}\]
and, for any polynomial $P=\sum a_{\alpha\beta}z^{\alpha}\bar{z}^{\beta}$,
$\left\Vert P\right\Vert =\sum\left|a_{\alpha\beta}\right|$.\end{stthm}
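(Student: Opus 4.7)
Since the statement is quoted from Diederich--Fornaess, my plan is to reconstruct their construction. The overall idea is to start from the Taylor expansion of $\rho$ at $\zeta$ and build $S_{0}$ by keeping only the holomorphic-in-$z$ monomials, signed and weighted so that their real parts pick up enough of the negative contribution of $\rho-\rho(\zeta)$ in all tangential directions. The normalization $l_{\zeta}$ moves $\zeta$ to the origin and aligns the complex normal with $e_{1}$; the polynomial $A_{\zeta}$ is then constructed so that, after applying $\Phi_{\zeta}$, the Taylor expansion at $0$ of $\rho_{\zeta}=\rho\circ(l_{\zeta}\circ\Phi_{\zeta})-\rho(\zeta)$ contains no pluriharmonic monomial that mixes $\xi_{1}$ with $(\xi_{2},\ldots,\xi_{n})$. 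Such an $A_{\zeta}$ is produced order by order as a formal power series and then truncated to a polynomial of degree $\leq 2m$; the $2m$-finite-type hypothesis makes the higher-order truncation errors harmless.

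With this normalization, formula \prettyref{eq|definition-formula-supp-func} defines $S_{0}$: retain the strictly convex normal piece $\xi_{1}+K\xi_{1}^{2}$ and subtract the purely tangential holomorphic part of the Taylor expansion of $\rho_{\zeta}$, weighted at order $j$ by $\varepsilon M^{2^{j}}\sigma_{j}$. Since the right-hand side is a polynomial in $\xi$ of degree $2m$ with coefficients depending smoothly on $\zeta$, and $z\mapsto(l_{\zeta}\circ\Phi_{\zeta})^{-1}(z)$ is, for each $\zeta$, a polynomial biholomorphism, $S_{0}$ is automatically a holomorphic polynomial of degree $2m$ in $z$ and $\mathcal{C}^{\infty}$ in $\zeta\in W_{0}$. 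The vanishing $S_{0}(\zeta,\zeta)=0$ is immediate because every monomial on the right of \prettyref{eq|definition-formula-supp-func} has order $\geq 1$ in $\xi$.

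The substantial work is the estimate \prettyref{eq|local-estimate-supp-func}. Along the line $\zeta+w_{1}n_{\zeta}+w_{2}t$, one expands $\rho_{\zeta}(\xi(w))$ as a polynomial in $(w_{1},\bar w_{1},w_{2},\bar w_{2})$ and splits the result into three parts: the normal contribution (involving $w_{1}$ or $\bar w_{1}$), the purely tangential pluriharmonic contribution (exactly what is subtracted in $S_{0}$), and the remaining tangential non-pluriharmonic contribution, which matches the $P^{j}_{\zeta,t}$'s up to lower-order perturbations. The normal part combines with $\xi_{1}+K\xi_{1}^{2}$ to yield $h\cdot[\rho-\rho(\zeta)]$ with $h$ bounded away from zero once $K$ is taken large enough (this is where the convexification constant is used), while the tangential pluriharmonic part produces the alternating sum that must be shown to have real part bounded above by $-\varepsilon c\sum_{j}\|P^{j}_{\zeta,t}\||w_{2}|^{j}$. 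This last bound is the main obstacle and rests on the classical Diederich--Fornaess damping trick: the doubly exponential weights $M^{2^{j}}$ are chosen so that, order by order, the negative contribution dictated by $\sigma_{j}$ strictly dominates all cross contributions coming from lower orders $j'<j$, while the sign pattern $\sigma_{j}$ prevents cancellation between consecutive orders. One completes the proof by an induction on $j$, fixing $M$ first (depending only on $m$ and on universal derivatives of $\rho$) and then choosing $\varepsilon$ small enough that the non-pluriharmonic remainder cannot destroy the sign; the lineally convex hypothesis enters to guarantee that the tangential expansion of $\rho_{\zeta}$ has no unexpected positive contribution, so that the damping argument can be closed.
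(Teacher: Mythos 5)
This statement is not proved in the paper: it is Theorem 2.2 of Diederich and Fornaess, imported verbatim from \cite{Diederich-Fornaess-Support-Func-lineally-cvx}, and the authors give no argument of their own. So there is no ``paper's proof'' to compare your proposal against; the only comparison point is the original reference.

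Taken on its own, your sketch reproduces the correct architecture of the Diederich--Fornaess construction (the unitary normalization $l_{\zeta}$, the polynomial biholomorphism $\Phi_{\zeta}$ killing pluriharmonic cross terms between $\xi_{1}$ and the tangential variables, the definition of $S_{0}$ via \prettyref{eq|definition-formula-supp-func}, and the doubly exponential weights $M^{2^{j}}$ with the sign pattern $\sigma_{j}$ used to run a domination-by-degree argument). But it is an outline, not a proof: the technical core of the theorem is precisely the inductive estimate showing that the alternating weighted pluriharmonic sum in $S_{0}$ has real part $\leq -\varepsilon c\sum_{j}\Vert P^{j}_{\zeta,t}\Vert\,\vert w_{2}\vert^{j}$ once the tangential non-pluriharmonic part of $\rho_{\zeta}$ is split off, and you assert this (``rests on the classical damping trick'', ``one completes the proof by an induction on $j$'') rather than carry it out. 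The step where the lineally convex hypothesis is actually used is also left vague (``no unexpected positive contribution''); in Diederich--Fornaess this is where one uses the fact that, on the complex tangent, $\rho$ restricted to a complex line has nonnegative average over circles, which is what makes the sign bookkeeping close. Without that the damping argument does not obviously terminate. So the proposal describes the right route but does not constitute a proof; if you want a checkable argument you would need to expand the induction and make the role of lineal convexity explicit, following \cite{Diederich-Fornaess-Support-Func-lineally-cvx} directly.
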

\begin{rem*}
In the above Theorem the function $S_{0}$ is globally defined for
$\zeta\in W_{0}$, and \prettyref{eq|definition-formula-supp-func}
is independent of the choice of $l_{\zeta}$. In particular, as it
is stated in \cite{Diederich-Fornaess-Support-Func-lineally-cvx},
if we restrict $\zeta$ to a small open set in $W_{0}$, the functions
$l_{\zeta}$, $h$ and $A_{\zeta}$ can be chosen $\mathcal{C}^{\infty}$
in that set (with respect to the two variables $\zeta$ and $z$).
\end{rem*}

\subsection{Koppelman formulas\label{sec|Koppelman-formulas}}

With the notations used for the holomorphic support function $S_{0}$,
we choose $R<d$ such that $\left|A_{\zeta}(z)\right|<10^{-1}$ if
$\left|z-\zeta\right|<R$ and, reducing $\eta_{0}$ if necessary,
we may suppose that $\delta_{\partial\Omega}(\zeta)<\eta_{0}$ implies
$\zeta\in W_{0}$.

Let us define two $\mathcal{C}^{\infty}$ functions $\chi_{1}(z,\zeta)=\hat{\chi}\left(\left|z-\zeta\right|\right)$
and $\chi_{2}(z)=\tilde{\chi}\left(\delta_{\partial\Omega}(z)\right)$
(where $\delta_{\partial\Omega}$ denotes the distance to the boundary
of $\Omega$) where $\chi$ and $\tilde{\chi}$ are $\mathcal{C}^{\infty}$
functions, $0\leq\hat{\chi},\,\tilde{\chi}\leq1$, such that $\hat{\chi}\equiv1$
on $\left[0,\nicefrac{R}{2}\right]$ and $\hat{\chi}\equiv0$ on $\left[R,+\infty\right[$
and $\tilde{\chi}\equiv1$ on $\left[0,\nicefrac{\eta_{0}}{2}\right]$
and $\tilde{\chi}\equiv0$ on $\left[\eta_{0},+\infty\right[$. Then
we define \[
\chi(z,\zeta)=\chi_{1}(z,\zeta)\chi_{2}(\zeta)\]
and\[
S(z,\zeta)=\chi(z,\zeta)S_{0}(z,\zeta)-\left(1-\chi(z,\zeta)\right)\left|z-\zeta\right|^{2}=\sum_{i=1}^{n}Q_{i}(z,\zeta)\left(z_{i}-\zeta_{i}\right).\]

Now we define the two forms $s$ and $Q$ used in \cite{BA82} in
the construction of the Koppelman formula by\[
s(z,\zeta)=\sum_{i=1}^{n}\left(\overline{\zeta_{i}}-\overline{z_{i}}\right)d\left(\zeta_{i}-z_{i}\right)\]
and\[
Q(z,\zeta)=\frac{1}{K_{0}\rho(\zeta)}\sum_{i=1}^{n}Q_{i}(z,\zeta)d\left(\zeta_{i}-z_{i}\right),\]
where $K_{0}$ is a large constant chosen so that \begin{equation}
\Re\mathrm{e}\left(\rho(\zeta)+\frac{1}{K_{0}}S(z,\zeta)\right)<\frac{\rho(\zeta)}{2}.\label{eq|real-part-Q-zeta-z-plus-1}\end{equation}
Notice that $\Re\mathrm{e}S\leq\chi\Re\mathrm{e}S_{0}\leq-C\rho(\zeta)$,
by \prettyref{eq|local-estimate-supp-func}, so it suffices to take
$K_{0}\geq2C$ . Remark also that, if $\zeta\in\partial\Omega$, \eqref{eq|local-estimate-supp-func}
implies $\Re\mathrm{e}S(z,\zeta)<0$ for $z\in\Omega$.

We point out also that $Q$ is not holomorphic in $z$ and that $s$
satisfies\[
\left|z-\zeta\right|^{2}=\left|\left\langle s,z-\zeta\right\rangle \right|\leq C\left|z-\zeta\right|,\, z,\zeta\in\Omega.\]

Following the construction done in \cite{BA82}, with $G(\xi)=\frac{1}{\xi}$,
we obtain two kernels\begin{equation}
K(z,\zeta)=C_{n}\sum_{k=0}^{n-1}\frac{(n-1)!}{k!}G^{(k)}\left(\frac{1}{K_{0}\rho(\zeta)}S(z,\zeta)+1\right)\frac{s(z,\zeta)\wedge\left(dQ\right)^{k}\wedge\left(ds\right)^{n-k-1}}{\left|\zeta-z\right|^{2(n-k)}}\label{eq|Koppelman-kernel-K}\end{equation}
and\begin{equation}
P(z,\zeta)=C{}_{n}'G^{(n)}\left(\frac{1}{K_{0}\rho(\zeta)}S(z,\zeta)+1\right)\left(dQ\right)^{n}\label{eq|Koppelman-kernel-P}\end{equation}
giving the following Koppelman formula:

\medskip{}

\emph{For all $\left(0,q\right)$-form $f$, with $\mathcal{C}^{1}(\bar{\Omega})$
coefficients, we have, for $z\in\Omega$,\begin{equation}
f(z)=\int_{\partial\Omega}f(\zeta)\wedge K^{0}(z,\zeta)+(-1)^{q+1}\overline{\partial_{z}}\int_{\Omega}f(\zeta)\wedge K^{1}(z,\zeta)+(-1)^{q}\int_{\Omega}\bar{\partial}f(\zeta)\wedge K^{2}(z,\zeta)-\int_{\Omega}f(\zeta)\wedge P(z,\zeta)\label{eq|General-Koppelman-formula}\end{equation}
where $K^{0}$ (resp. $K^{1}$, resp. $K^{2}$, resp. $P$) is the
component of $K$ of be-degree $\left(0,q\right)$ in $z$ and $\left(n,n-q-1\right)$
in $\zeta$ (resp. $\left(0,q-1\right)$ in $z$ and $\left(n,n-q\right)$
in $\zeta$, resp. $\left(0,q\right)$ in $z$ and $\left(n,n-q-1\right)$
in $\zeta$, resp. $\left(0,q\right)$ in $z$ and $\left(n,n-q\right)$
in $\zeta$).}

\medskip{}

Moreover, by definition of $S$, $G^{(k)}\left(S(z,\zeta)+1\right)=\frac{c_{k}\rho(\zeta)^{k+1}}{\left[\frac{1}{K_{0}}S(z,\zeta)+\rho(\zeta)\right]^{k+1}}$,
and, for $\zeta\in\partial\Omega$, $K^{0}(z,\zeta)=0$ so that the
first integral in the Koppelman formula disappears, and if $f$ is
$\bar{\partial}$-closed \prettyref{eq|General-Koppelman-formula}
becomes\begin{equation}
f(z)=(-1)^{q+1}\overline{\partial_{z}}\int_{\Omega}f(\zeta)\wedge K^{1}(z,\zeta)-g,\label{eq|Koppelman-formula-for-d-bar-closed}\end{equation}
with\[
g=\int_{\Omega}f(\zeta)\wedge P(z,\zeta)\]
and $g$ is $\bar{\partial}$-closed.

\medskip{}

To be able to estimate the kernels $K^{1}$ and $P$, we need a fundamental
estimate for $\left|\rho(\zeta)+\frac{1}{K_{0}}S(z,\zeta)\right|$.
\begin{sllem}
\label{lem|lemma-2.1}There exists $K_{0}$ such that, for $\zeta\in P_{\varepsilon}^{i}(z)=P_{2^{-i}\varepsilon}(z)\setminus P_{2^{-i-1}\varepsilon}(z)$,
we have:
\begin{enumerate}
\item $\left|\rho(\zeta)+\frac{1}{K_{0}}S(z,\zeta)\right|\gtrsim2^{-i}\varepsilon$,
$\left(z,\zeta\right)\in\bar{\Omega}\times\bar{\Omega}$;
\item $\left|\frac{1}{K_{0}}S(\zeta,z)\right|\gtrsim2^{-i}\varepsilon$,
$\left(z,\zeta\right)\in\partial\Omega\times\bar{\Omega}$.
\end{enumerate}
\end{sllem}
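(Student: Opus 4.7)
The plan is to dichotomize on the size of $|\rho(\zeta)|$ relative to $2^{-i}\varepsilon$, applying \eqref{eq|real-part-Q-zeta-z-plus-1} in the large regime and the local support-function estimate \eqref{eq|local-estimate-supp-func} together with the volume balance \eqref{geometry-2} in the small one. For part (1), \eqref{eq|real-part-Q-zeta-z-plus-1} gives at once
\[
\left|\rho(\zeta)+\tfrac{1}{K_{0}}S(z,\zeta)\right|\geq\left|\Re\bigl(\rho(\zeta)+\tfrac{1}{K_{0}}S(z,\zeta)\bigr)\right|\geq\tfrac{|\rho(\zeta)|}{2},
\]
so the bound is immediate whenever $|\rho(\zeta)|\gtrsim 2^{-i}\varepsilon$, as well as in the trivial regime where $\chi(z,\zeta)\neq 1$, since there $S=-|z-\zeta|^{2}$ with $|z-\zeta|\geq R/2$ and the bound is automatic for $\varepsilon$ small. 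The substantive case is $|\rho(\zeta)|\ll 2^{-i}\varepsilon$ and $\chi\equiv 1$.

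In this case I would retrace the derivation of \eqref{eq|real-part-Q-zeta-z-plus-1} while keeping \emph{all} terms of \eqref{eq|local-estimate-supp-func}. Choosing $K_{0}\geq 2\sup h$, one finds that, for any tangent direction $t$ and any decomposition $z=\zeta+w_{1}n_{\zeta}+w_{2}t$,
\[
-\Re\bigl(\rho(\zeta)+\tfrac{1}{K_{0}}S(z,\zeta)\bigr)\gtrsim|\rho(\zeta)|+\tfrac{|\rho(z)|h(z)}{K_{0}}+\tfrac{\varepsilon c}{K_{0}}\sum_{j=2}^{n}\|P_{\zeta,t}^{j}\|\,|w_{2}|^{j},
\]
with each of the three summands nonnegative. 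Since $\zeta\notin P_{2^{-i-1}\varepsilon}(z)$, in a $(\zeta,2^{-i-1}\varepsilon)$-extremal basis $(v_{1},\ldots,v_{n})$ one writes $z-\zeta=\sum\lambda_{k}v_{k}$ with $|\lambda_{k_{0}}|\gtrsim\tau_{k_{0}}(\zeta,2^{-i-1}\varepsilon)$ for some $k_{0}$. If $|\rho(z)|\gtrsim 2^{-i}\varepsilon$ the second summand suffices; otherwise both $z$ and $\zeta$ lie within $\ll 2^{-i}\varepsilon$ of $\partial\Omega$, so their normal separation is also $\ll 2^{-i}\varepsilon$, forcing $k_{0}>1$. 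Selecting $t=v_{k_{0}}$ and $w_{2}=\lambda_{k_{0}}$, property \eqref{geometry-2} at scale $2^{-i}\varepsilon$ yields $\sum_{j}\|P_{\zeta,t}^{j}\|\,\tau(\zeta,t,2^{-i}\varepsilon)^{j}\simeq 2^{-i}\varepsilon$, hence $\sum_{j}\|P_{\zeta,t}^{j}\|\,|w_{2}|^{j}\gtrsim 2^{-i}\varepsilon$, and after absorbing the fixed constants $\varepsilon c/K_{0}$ the third summand is $\gtrsim 2^{-i}\varepsilon$.

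Part (2) is analogous: since $\rho(z)=0$, the same computation with base point $z$ and $w=\zeta-z$ gives $\Re S_{0}(\zeta,z)\leq\rho(\zeta)h(\zeta)-\varepsilon c\sum_{j=2}^{n}\|P_{z,t}^{j}\|\,|w_{2}|^{j}\leq 0$, so $|S(\zeta,z)|\geq|\rho(\zeta)|h(\zeta)+\varepsilon c\sum_{j}\|P_{z,t}^{j}\|\,|w_{2}|^{j}$ and the same dichotomy closes it (with $\tau$'s at $z$ and $\zeta$ comparable thanks to \eqref{eq|polydiscs-pseudodistance}). The main obstacle is that \eqref{eq|local-estimate-supp-func} is only a two-dimensional slice---normal direction plus one chosen tangent $t$---whereas the condition $\zeta\notin P_{2^{-i-1}\varepsilon}(z)$ may in general manifest as a nontrivial mixture of several extremal directions; the resolution is to pick $t=v_{k_{0}}$ along the worst-offending direction and rely on the weights $M^{2^{j}}$ and signs $\sigma_{j}$ built into the Diederich-Fornaess expansion \eqref{eq|definition-formula-supp-func} to guarantee that the dominant term at scale $2^{-i}\varepsilon$ is not cancelled by lower-order contributions from the remaining directions. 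Verifying the absence of such disastrous cancellation is the only genuinely delicate point.
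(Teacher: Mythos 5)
Your overall plan — reduce by \eqref{eq|real-part-Q-zeta-z-plus-1} and the cut-off to the local regime, then exploit \eqref{eq|local-estimate-supp-func} together with \eqref{geometry-2} at scale $2^{-i}\varepsilon$ — is the right framework, and your reduction steps (part (2) from part (1) via symmetry of the pseudodistance, the easy regimes $|\rho(\zeta)|\gtrsim 2^{-i}\varepsilon$ and $\chi\neq1$) are fine. But there are two genuine gaps in the core case $|\rho(\zeta)|\ll 2^{-i}\varepsilon$, $\chi\equiv1$.

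First, the claim that small $|\rho(z)|$ and $|\rho(\zeta)|$ force the ``worst-offending'' index $k_{0}$ to be $>1$ is false. Writing $z-\zeta=\sum_k\lambda_kw_k$ in a $(\zeta,\varepsilon')$-extremal basis ($\varepsilon'=2^{-i-1}\varepsilon$), the normal coefficient $\lambda_1$ is \emph{complex}; its \emph{imaginary} part can be of order $\varepsilon'$ even when both $\rho(z)$ and $\rho(\zeta)$ are tiny. Your dichotomy leaves this case untreated, and it cannot be handled by \eqref{eq|local-estimate-supp-func} because the second term there is purely tangential. The paper's proof closes this case differently: when all tangential components satisfy $|\lambda_i|<k_0\tau_i(\zeta,\varepsilon')$, property \eqref{geometry-4} forces $|\lambda_1|\gtrsim\varepsilon'$, and then the \emph{exact} formula \eqref{eq|definition-formula-supp-func} is used to isolate the linear term $\lambda_1(1-A_\zeta)/K_0$, whose modulus dominates $|\rho(\zeta)|$ and all the error terms $O(k_0^2\varepsilon'+\varepsilon'^2)$; this argument works just as well when $\lambda_1$ is nearly purely imaginary.

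Second, in the tangential case you set $t=v_{k_{0}}$, but then $z-\zeta$ does not in general lie in the complex plane spanned by $n_\zeta$ and $v_{k_0}$, so \eqref{eq|local-estimate-supp-func} simply does not apply with this $t$. You flag exactly this as the ``only genuinely delicate point'' and propose to rely on the weights $M^{2^j}$ and signs $\sigma_j$ to rule out cancellation from the other directions, but that argument is not supplied and is not the mechanism the paper uses. The fix is to take $t$ to be the \emph{direction of the full tangential part} $v_1=z-\zeta-\lambda_1w_1$, i.e.\ $t=v_1/\|v_1\|$, so that $z=\zeta+\lambda_1 w_1+\|v_1\|t$ exactly. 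Then property \eqref{geometry-3} of the extremal basis, together with the hypothesis $|\lambda_{i}|\geq k_0\tau_i(\zeta,\varepsilon')$ for some $i\geq2$, gives $\|v_1\|\simeq\tau(\zeta,t,\varepsilon')$, and property \eqref{geometry-2} applied to the unit tangent $t$ gives $\sum_{j\geq2}\|P^j_{\zeta,t}\|\|v_1\|^j\gtrsim\varepsilon'$; now \eqref{eq|local-estimate-supp-func} delivers $\Re\, S_0(z,\zeta)\lesssim C\rho(\zeta)-c_1\varepsilon'$ and the bound follows. Until these two points are repaired your argument does not close.
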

\begin{proof}
Clearly (2) is a special case of (1) (because $d(z,\zeta)\simeq d(\zeta,z)$).
Let us prove (1).

First note that, by \prettyref{eq|local-estimate-supp-func}, if $z=\zeta+w_{1}n_{\zeta}+w_{2}t$,\[
\Re\mathrm{e}\left(\rho(\zeta)+\frac{1}{K_{0}}S(z,\zeta)\right)\leq\chi(z,\zeta)\left[c_{1}\rho(z)+\frac{1}{2}\rho(\zeta)\right]-\left(1-\chi(z,\zeta)\right)\left|z,\zeta\right|^{2}-c_{2}\sum_{j=2}^{n}\left\Vert P_{\zeta,t}^{j}\right\Vert \left|w_{2}\right|^{j},\]
and then, if $\left|z-\zeta\right|>\varepsilon_{0}$, $\Re\mathrm{e}\left(\rho(\zeta)+\frac{1}{K_{0}}S(z,\zeta)\right)\lesssim-c_{0}\left(\varepsilon_{0}\right)$,
and it is enough to prove the Lemma for $\varepsilon<\varepsilon_{0}$
small enough.

Let us denote $\varepsilon'=2^{-i-1}\varepsilon$ and let $\left(w_{1},\ldots,w_{n}\right)$
be an $\varepsilon'$-extremal basis at $\zeta$. Let us write $z=\zeta+\sum_{i=1}^{n}\lambda_{i}w_{i}=\zeta+\lambda_{1}w_{1}+v_{1}=\zeta+\lambda_{1}w_{1}+\left\Vert v_{1}\right\Vert v$.
Remark that, by \eqref{eq|real-part-Q-zeta-z-plus-1}, the result
is trivial if $\left|\rho(\zeta)\right|\gtrsim\varepsilon'$. Let
us suppose $\left|\rho(\zeta)\right|\ll\varepsilon'$.

Let $0<k_{0}\ll1$ be a real number which will be fixed later.

Suppose first that, $\forall i\geq2$, $\left|\lambda_{i}\right|<k_{0}\tau_{i}(\zeta,\varepsilon')$.
Then $\left|\lambda_{1}\right|\gtrsim\varepsilon'$ (property \eqref{geometry-4}
of \prettyref{sec|Geometry-of-lineally-convex}), and, as\[
\sum_{j=2}^{m}M^{2^{j}}\sigma_{j}\sum_{\SU{\left|\alpha\right|=j\AS\alpha_{1}=0}}\frac{\partial\rho_{\zeta}(0)}{\partial\xi^{\alpha}}\xi^{\alpha}=\sum_{j=2}^{m}M^{2^{j}}\sigma_{j}\sum_{\SU{\left|\alpha\right|=j\AS\alpha_{1}=0}}\frac{\partial\rho(z)}{\partial w^{\alpha}}_{|w=0}(\lambda w)^{\alpha},\]
by \eqref{geometry-1} of \prettyref{sec|Geometry-of-lineally-convex},
we have\[
\left|\sum_{j=2}^{m}M^{2^{j}}\sigma_{j}\sum_{\SU{\left|\alpha\right|=j\AS\alpha_{1}=0}}\frac{\partial\rho(z)}{\partial w^{\alpha}}_{|w=0}(\lambda w)^{\alpha}\right|\lesssim k_{0}^{2}\varepsilon'.\]

Using formula \prettyref{eq|definition-formula-supp-func}, this gives\begin{eqnarray*}
\left|\rho(\zeta)+\frac{1}{K_{0}}S_{0}(z,\zeta)\right| & \geq & \left|\rho(\zeta)+\frac{\lambda_{1}\left(1-A_{\zeta}(z)\right)}{K_{0}}+\frac{K\lambda_{1}^{2}\left(1-A_{\zeta}(z)\right)^{2}}{K_{0}}\right|-C_{1}k_{0}^{2}\varepsilon'\\
 & \geq & \left|\rho(\zeta)+\frac{\lambda_{1}\left(1-A_{\zeta}(z)\right)}{K_{0}}\right|-C_{2}\varepsilon'\left(k_{0}^{2}+\varepsilon'\right),\end{eqnarray*}
the last inequality following the fact that $\left|\lambda_{1}\right|\lesssim\varepsilon'$
(property \eqref{geometry-4} of \prettyref{sec|Geometry-of-lineally-convex}).
As $\left|A_{\zeta}(z)\right|<10^{-1}$, it follows that\[
\left|\rho(\zeta)+\frac{1}{K_{0}}S_{0}(z,\zeta)\right|\gtrsim\varepsilon',\]
if $k_{0}$ and $\varepsilon_{0}$ are chosen sufficiently small.

We now fix $\varepsilon_{0}$. As $\Re\mathrm{e}\left(\rho(\zeta)+\frac{1}{K_{0}}S_{0}(z,\zeta)\right)<0$,
we have\[
\left|\rho(\zeta)+\frac{1}{K_{0}}S(z,\zeta)\right|\gtrsim\chi(z,\zeta)\left|\rho(\zeta)+\frac{1}{K_{0}}S_{0}(z,\zeta)\right|+\left(1-\chi(z,\zeta)\right)\left|z-\zeta\right|^{2},\]
and the inequality $\left|z-\zeta\right|^{2}\gtrsim\left\Vert v_{1}\right\Vert ^{2}\gtrsim\tau(\zeta,v,\varepsilon)^{2}\gtrsim\varepsilon'$
(properties \eqref{geometry-3} and \eqref{geometry-5} of \prettyref{sec|Geometry-of-lineally-convex})
proves the Lemma in that case.

Suppose then there exists $i\geq2$ such that $\left|\lambda_{i}\right|\geq k_{0}\tau_{i}(\zeta,\varepsilon')$.
By (\ref{geometry-4}) of \prettyref{sec|Geometry-of-lineally-convex},
we have $\left\Vert v_{1}\right\Vert \simeq\tau(\zeta,v,\varepsilon')$,
and by (\ref{geometry-2}) of \prettyref{sec|Geometry-of-lineally-convex},
we get ($v$ being tangent to $\rho=\rho(\zeta)$ at $\zeta$)\[
\sum_{2\leq\left|\alpha+\beta\right|\leq m}\left|a_{\alpha\beta}^{v}(\zeta)\right|\tau(\zeta,v,\varepsilon)^{\alpha+\beta}=\sum_{1\leq\left|\alpha+\beta\right|\leq m}\left|a_{\alpha\beta}^{v}(\zeta)\right|\tau(\zeta,v,\varepsilon)^{\alpha+\beta}\simeq\varepsilon',\]
and \prettyref{eq|local-estimate-supp-func} implies\[
\Re\mathrm{e}S_{0}(z,\zeta)\lesssim C\rho(\zeta)-c_{1}\varepsilon',\]
 and we conclude by the same argument on $\left|z-\zeta\right|^{2}$.\end{proof}
\begin{sllem}\label{lem|P-C-infinity}
If $q\geq1$, all the derivatives, in $z$, of $P(z,\zeta)$ are uniformly
bounded in $\bar{\Omega}\times\bar{\Omega}$.\end{sllem}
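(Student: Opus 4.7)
The plan is to locate the support of the $(0,q)$-part of $(dQ)^{n}$ well away from the zeros of $\rho(\zeta)+S(z,\zeta)/K_{0}$, and then read off the boundedness of every $z$-derivative. Because $q\geq1$, each monomial occurring in this $(0,q)$-part must contain at least one factor $d\bar{z}_{j}$. Since $d(\zeta_{i}-z_{i})$ is of type $(1,0)$ and $Q_{i}^{0}$ is holomorphic in $z$, the only source of $d\bar{z}_{j}$ is $\bar{\partial}_{z}Q_{i}$, where $Q_{i}=\chi Q_{i}^{0}-(1-\chi)(\bar{z}_{i}-\bar{\zeta}_{i})$. A direct computation gives
\[
\bar{\partial}_{z}Q_{i}=\bar{\partial}_{z}\chi\bigl(Q_{i}^{0}+\bar{z}_{i}-\bar{\zeta}_{i}\bigr)-(1-\chi)\,d\bar{z}_{i},
\]
which vanishes as soon as $\chi\equiv1$ near $z$. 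Using the $\mathcal{C}^{\infty}$-flatness of $\hat{\chi}$ at $R/2$ and of $\tilde{\chi}$ at $\eta_{0}/2$, the whole $(0,q)$-coefficient is $\mathcal{C}^{\infty}$ in $(z,\zeta)$, supported in the compact set
\[
K=\bigl\{(z,\zeta)\in\bar{\Omega}\times\bar{\Omega}:|z-\zeta|\geq R/2\text{ or }\delta_{\partial\Omega}(\zeta)\geq\eta_{0}/2\bigr\},
\]
and flat at $\partial K$.

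The heart of the argument is to show that on $K$ the dangerous factor
\[
G^{(n)}\bigl(\tfrac{1}{K_{0}\rho(\zeta)}S(z,\zeta)+1\bigr)=c_{n}\frac{\rho(\zeta)^{n+1}}{\bigl[\rho(\zeta)+\tfrac{1}{K_{0}}S(z,\zeta)\bigr]^{n+1}}
\]
is uniformly bounded, i.e., that $|\rho(\zeta)+S(z,\zeta)/K_{0}|\geq c>0$ on $K$. When $\delta_{\partial\Omega}(\zeta)\geq\eta_{0}/2$, one has $|\rho(\zeta)|\gtrsim\eta_{0}$, and \eqref{eq|real-part-Q-zeta-z-plus-1} directly yields $|\rho(\zeta)+S(z,\zeta)/K_{0}|\geq|\rho(\zeta)|/2$. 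When instead $|z-\zeta|\geq R/2$, I would first choose $\varepsilon_{0}$ so small that $P_{\varepsilon_{0}}(z)\subset B(z,R/2)$ uniformly in $z$ (possible since $\tau_{j}(z,\varepsilon_{0})\lesssim\varepsilon_{0}^{1/(2m)}$ by property \eqref{geometry-5}); this forces $d(z,\zeta)\geq\varepsilon_{0}$, and \prettyref{lem|lemma-2.1} applied at the dyadic scale containing $\zeta$ gives $|\rho(\zeta)+S(z,\zeta)/K_{0}|\gtrsim\varepsilon_{0}$.

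To finish, each coefficient of $P$ is a product of the smooth data $Q_{i}^{0},\chi,\rho$ (and their $z$-derivatives) with a fixed power of $1/(\rho(\zeta)+S(z,\zeta)/K_{0})$, all multiplied by the $(0,q)$-factor supported in $K$. By the previous paragraph the denominator is bounded on $K$, while the $(0,q)$-factor is $\mathcal{C}^{\infty}$-flat at $\partial K$, so the coefficient extends to a $\mathcal{C}^{\infty}$ function on the compact set $\bar{\Omega}\times\bar{\Omega}$ (identically zero off $K$). Any $z$-derivative only raises the exponent of $1/(\rho(\zeta)+S(z,\zeta)/K_{0})$ (still bounded on $K$) and differentiates smooth factors, while keeping the support inside $K$; continuity on a compact set then delivers a uniform bound order by order.

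I expect the only real obstacle to be the second step: turning the local estimates on $S_{0}$ into a uniform lower bound for $|\rho+S/K_{0}|$ on the part of $K$ that touches $\partial\Omega$. This is precisely what \prettyref{lem|lemma-2.1} was designed to provide, so the argument reduces to calibrating $\varepsilon_{0}$ against $R$ and invoking that lemma at the correct scale.
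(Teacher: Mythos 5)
Your proposal is correct and takes essentially the same approach as the paper: for $q\geq1$ the $(0,q)$-part of $(dQ)^n$ can only arise from $\bar\partial_z\chi$, so $P$ is supported where $|z-\zeta|\geq R/2$ or $\delta_{\partial\Omega}(\zeta)\geq\eta_0/2$, and on that set the denominator $\rho(\zeta)+S(z,\zeta)/K_0$ is bounded below by \eqref{eq|real-part-Q-zeta-z-plus-1} in the first case and by \prettyref{lem|lemma-2.1} together with $\tau(z,v,\varepsilon)\lesssim\varepsilon^{1/2m}$ in the second. You are merely a bit more explicit than the paper about the smooth matching at the boundary of the support set.
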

\begin{proof}
Recall $P(z,\zeta)=c_{n}\frac{\rho(\zeta)^{n+1}}{\left(\frac{1}{K_{0}}S(z,\zeta)+\rho(\zeta)\right)^{n+1}}\left(dQ\right)^{n}$.
If $\delta_{\partial\Omega}(\zeta)>\nicefrac{\eta_{0}}{2}$, this
is clear by \prettyref{eq|real-part-Q-zeta-z-plus-1}. Suppose $\delta_{\partial\Omega}(\zeta)\leq\nicefrac{\eta_{0}}{2}$.
If $\left|z-\zeta\right|<\nicefrac{R}{2}$ then $Q$ is holomorphic
in $z$ and the component of $\left(dQ\right)^{n}$ of be-degree $\left(0,q\right)$
in $z$ (recall we suppose $q\geq1$) is identically $0$ which implies
$P=0$. If $\left|z-\zeta\right|\geq\nicefrac{R}{2}$, the preceding
Lemmas, give $\left|\frac{1}{K_{0}}S(z,\zeta)+\rho(\zeta)\right|\gtrsim\left(R\right)^{2m}$
(because, for unitary $v$, $\tau(z,v,\varepsilon)\lesssim\varepsilon^{\nicefrac{1}{2m}}$),
and the Lemma follows easily.\end{proof}
\begin{sllem}\label{lem|Sob-Est-P}
For each positive integer $s$ there exists a linear operator $\mathcal{L}_s$ and a constant $C(s)$ such
that, if $g=\int_{\Omega}f(\zeta)\wedge P(z,\zeta)$ is the $\bar{\partial}$-closed
form of \prettyref{eq|Koppelman-formula-for-d-bar-closed} there exists
a solution $v_{s}=\mathcal{L}_s(g)$ to the equation $\bar{\partial}v_{s}=g$ satisfying
the estimate\[
\left\Vert v_{s}\right\Vert _{s}\leq C(s)\left\Vert f\right\Vert _{L^{1}(\Omega)},\]
where $\left\Vert .\right\Vert _{s}$ denotes the Sobolev norm of
index $s$ in $\Omega$.\end{sllem}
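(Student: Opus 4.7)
The strategy is essentially contained in the two preceding lemmas: use the uniform bound on the $z$-derivatives of $P(z,\zeta)$ to control $g$ in arbitrary Sobolev norm by $\left\Vert f\right\Vert _{L^{1}(\Omega)}$, and then invoke Kohn's theory to solve the $\bar{\partial}$-equation on $\Omega$ with Sobolev estimates.

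First I would differentiate under the integral sign. By the previous Lemma, for every multi-index $\alpha$ there is a constant $C_{\alpha}$ such that $\left|\partial_{z}^{\alpha}P(z,\zeta)\right|\leq C_{\alpha}$ uniformly on $\bar{\Omega}\times\bar{\Omega}$. Since the coefficients of $f$ are integrable, one obtains
\[
\sup_{z\in\bar{\Omega}}\left|\partial_{z}^{\alpha}g(z)\right|\leq C_{\alpha}\left\Vert f\right\Vert _{L^{1}(\Omega)},
\]
hence $g\in\mathcal{C}^{\infty}(\bar{\Omega})$ and $\left\Vert g\right\Vert_{W^{s}(\Omega)}\leq C(s)\left\Vert f\right\Vert _{L^{1}(\Omega)}$ for every integer $s$, because $\Omega$ is bounded. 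Moreover, as recalled just before \prettyref{eq|Kopelman-formula-for-d-bar-closed}, $g$ is $\bar{\partial}$-closed in $\Omega$.

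Next I would invoke the $\bar{\partial}$-Neumann solution theory. A bounded smooth lineally convex domain of finite type is pseudoconvex of finite type, so Catlin's subelliptic estimate holds and, by the regularity theorem of Kohn (\cite{Kohn-defining-function}), the canonical solution $v_{s}=\bar{\partial}^{\ast}N g$ of $\bar{\partial}v_{s}=g$ belongs to $W^{s}(\Omega)$ with the estimate $\left\Vert v_{s}\right\Vert _{s}\leq C(s)\left\Vert g\right\Vert _{s}$. Combining this with the Sobolev bound on $g$ obtained in the previous step gives
\[
\left\Vert v_{s}\right\Vert _{s}\leq C(s)\left\Vert g\right\Vert _{s}\leq C'(s)\left\Vert f\right\Vert _{L^{1}(\Omega)},
\]
which is the desired inequality.

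There is no real obstacle here; the only point requiring care is that one must apply Kohn's theorem in the correct bi-degree $(0,q)$ and use that the canonical solution (not an arbitrary solution) is the one for which Sobolev regularity is automatic on smooth bounded pseudoconvex domains of finite type. The constant $C(s)$ of course depends on $s$ via both the bounds $C_{\alpha}$ on the derivatives of $P$ and the Kohn constant for the subelliptic gain in $W^{s}$.
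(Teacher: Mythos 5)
Your proof is correct and follows the same overall strategy as the paper: use the uniform boundedness of $\partial_z^\alpha P$ to get $\|g\|_{W^s(\Omega)}\lesssim_s\|f\|_{L^1(\Omega)}$, note $g$ is $\bar\partial$-closed, and apply a Kohn-type global regularity result to solve $\bar\partial v_s=g$ with Sobolev estimates. The one place where you depart from the paper is in which regularity theorem you invoke. The paper cites Kohn's 1973 weighted $\bar\partial$-Neumann theory (\cite{Kohn-defining-function}), which for a smooth bounded pseudoconvex domain (no finite type needed) produces, for each $s$, a weight and hence a solution operator giving a solution $v_s\in W^s$ with $\|v_s\|_s\leq C(s)\|g\|_s$; this is exactly why the solution is allowed to depend on $s$ in the statement of the Lemma. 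You instead invoke Catlin's subelliptic estimate (valid because $\Omega$ is of finite type) and take the canonical solution $\bar\partial^{\ast}Ng$, which is a single solution that is automatically in every $W^s$. Both routes work here: yours gives a slightly stronger conclusion (a solution independent of $s$) at the cost of leaning on the finite-type hypothesis, while the paper's is the more economical argument matching the $v_s$ notation in the Lemma. Be aware, though, that the subelliptic regularity you describe should be attributed properly (Catlin for the subelliptic estimate, Kohn--Nirenberg for the regularity consequence), whereas the paper's citation is specifically to Kohn's weighted theory; as stated your attribution conflates the two.
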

\begin{proof}
This is an immediate consequence of the preceding Lemma and the results
on the weighted $\bar{\partial}$-Neumann problem obtained by J. J.
Kohn in \cite{Kohn-defining-function}.
\end{proof}

As explained at the end of the introduction of \prettyref{sec|resolution-d-bar-equation}
this last Lemma shows that to obtain sharp estimates for solution
of the $\bar{\partial}$-equation in lineally convex domains of finite
type, like the one stated in \prettyref{thm|L1-estimate_boundary_d-bar},
it is enough to prove that the form $z\mapsto\int_{\Omega}f(\zeta)\wedge K^{1}(z,\zeta)$
(c.f.\prettyref{eq|Koppelman-formula-for-d-bar-closed}) is continuous on $\overline\Omega$
and that
\[\left\Vert\int_{\Omega}f(\zeta)\wedge K^{1}(z,\zeta)\right\Vert_{L^1(\partial\Omega)}\leq C|\!|\!|f|\!|\!|_k.\]
This will be done in the next section.

\subsection{Proof of \prettyref{thm|L1-estimate_boundary_d-bar}}

By formulas \prettyref{eq|Koppelman-kernel-K} and \prettyref{eq|General-Koppelman-formula},
we have\[
K^{1}(z,\zeta)=\sum_{k=n-q}^{n-1}C'_{k}\frac{\rho(\zeta)^{k+1}s\wedge\left(\partial_{\bar{\zeta}}Q\right)^{n-q}\wedge\left(\partial_{\bar{z}}Q\right)^{k+q-n}\wedge\left(\partial_{\bar{z}}s\right)^{n-k-1}}{\left|z-\zeta\right|^{2\left(n-k\right)}\left(\frac{1}{K_{0}}S(z,\zeta)+\rho(\zeta)\right)^{k+1}}.\]

A priory, formula \prettyref{eq|General-Koppelman-formula} is only
valid for $z\in\Omega$. But, as noted by H. Skoda in \cite{Sko76},
the form\[
\int_{\Omega}f(\zeta)\wedge K^{1}(z,\zeta)\]
is continuous in $\bar{\Omega}$ if the kernel $K^{1}$ satisfies
a condition of uniform integrability:\begin{equation}
\int_{\Omega\cap P_{\varepsilon}(z)}\left|K^{1}(z,\zeta)\right|dV(\zeta)=\mathrm{O}\left(\varepsilon^{\nicefrac{1}{m}}\right),\label{eq|uniform-integrability-condition}\end{equation}
uniformly for $z$ satisfying $\delta_{\partial\Omega}(z)\leq\varepsilon$
and $\varepsilon$ small enough.

Under these hypothesis ($\zeta\in P_{\varepsilon}(z)$), $Q(z,\zeta)=\frac{1}{K_{0}\rho(\zeta)}\sum_{i=1}^{n}Q_{i}(z,\zeta)d\left(\zeta_{i}-z_{i}\right)$
is holomorphic in $z$ and $K^{1}$ is reduced to\[
K^{1}(z,\zeta)=c\frac{\rho(\zeta)^{n-q+1}s\wedge\left(\partial_{\bar{\zeta}}Q\right)^{n-q}\wedge\left(\partial_{\bar{z}}s\right)^{q-1}}{\left|z-\zeta\right|^{2q}\left(\frac{1}{K_{0}}S(z,\zeta)+\rho(\zeta)\right)^{n-q+1}}.\]

To prove \prettyref{eq|uniform-integrability-condition}, we use the
coordinate system $\left(\zeta_{1},\ldots,\zeta_{n}\right)$ associated
to the $\left(z,\delta_{\partial\Omega}(z)\right)$-extremal basis
and the following estimates:
\begin{sllem}
\label{lem|Lemma-2.5}For $z$ close to $\partial\Omega$, $\varepsilon$
small and $\zeta\in P_{\varepsilon}(z)$, we have:
\begin{enumerate}
\item $\left|\frac{\partial\rho}{\partial\zeta_{i}}(\zeta)\right|\lesssim\frac{\varepsilon}{\tau_{i}(z,\varepsilon)}$
(\eqref{geometry-1} of \prettyref{sec|Geometry-of-lineally-convex});
\item $\left|Q_{i}(z,\zeta)\right|+\left|Q_{i}(\zeta,z)\right|\lesssim\frac{\varepsilon}{\tau_{i}(z,\varepsilon)}$
(see \cite{Diederich-Fischer_Holder-linally-convex});
\item $\left|\frac{\partial Q_{i}(z,\zeta)}{\partial\overline{\zeta_{j}}}\lesssim\frac{\varepsilon}{\tau_{i}(z,\varepsilon)\tau_{j}(z,\varepsilon)}\right|$
(see \cite{Diederich-Fischer_Holder-linally-convex}).
\end{enumerate}
\end{sllem}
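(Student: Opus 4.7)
The plan is to reduce all three estimates to the explicit formula \prettyref{eq|definition-formula-supp-func} for $S_{0}$ combined with property (\ref{geometry-1}) of \prettyref{sec|Geometry-of-lineally-convex}. First I would observe that under the hypotheses ($z$ close to $\partial\Omega$, $\varepsilon$ small, $\zeta\in P_{\varepsilon}(z)$), property (\ref{geometry-5}) gives $|z-\zeta|\lesssim\varepsilon^{1/2m}$, which for $\varepsilon$ small enough ensures that both $\chi_{1}(z,\zeta)=1$ and $\chi_{2}(\zeta)=1$. Hence $S(z,\zeta)=S_{0}(z,\zeta)$ and $Q_{i}(z,\zeta)=Q_{i}^{0}(z,\zeta)$. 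Moreover, standard McNeal-type swapping arguments, based on the pseudodistance estimates \prettyref{eq|polydiscs-pseudodistance} together with property (\ref{geometry-5}), show that $\tau_{i}(z,\varepsilon)\simeq\tau_{i}(\zeta,\varepsilon)$ and that the $(z,\varepsilon)$-extremal basis is, up to mixing of indices of the same $\tau$-order, also $(\zeta,\varepsilon)$-extremal.

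For (1), I would apply property (\ref{geometry-1}) at $\zeta$ with $|\alpha|=1$, $\beta=0$ in $(\zeta,\varepsilon)$-extremal coordinates, and then transfer the bound to the $(z,\varepsilon)$-extremal basis using the comparability just recalled.

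For (2), I would use a Hefer-type integral decomposition
\[
Q_{i}^{0}(z,\zeta)=\int_{0}^{1}\frac{\partial S_{0}}{\partial z_{i}}\bigl(\zeta+t(z-\zeta),\zeta\bigr)\,dt,
\]
valid because $S_{0}(\zeta,\zeta)=0$ and $S_{0}$ is holomorphic in $z$. Differentiating formula \prettyref{eq|definition-formula-supp-func} in the coordinates $\xi=\Phi_{\zeta}^{-1}\circ l_{\zeta}^{-1}(z)$, each coefficient $\partial^{|\alpha|}\rho_{\zeta}(0)/\partial\xi^{\alpha}$ is bounded by $\varepsilon/\prod_{j}\tau_{j}(\zeta,\varepsilon)^{\alpha_{j}}$ via property (\ref{geometry-1}); since $|\xi^{\alpha}|\lesssim\prod_{j}\tau_{j}(\zeta,\varepsilon)^{\alpha_{j}}$ on $P_{\varepsilon}(z)$ by property (\ref{geometry-4}), a derivative in $z_{i}$ loses exactly one factor $\tau_{i}(\zeta,\varepsilon)$, yielding $|\partial S_{0}/\partial z_{i}|\lesssim\varepsilon/\tau_{i}(\zeta,\varepsilon)$ on $P_{\varepsilon}(z)$. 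Integration and the comparability of the $\tau_{i}$ at $z$ and $\zeta$ give the stated bound on $|Q_{i}(z,\zeta)|$, and the symmetric argument gives the bound on $|Q_{i}(\zeta,z)|$.

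Estimate (3) proceeds in the same spirit but with one extra $\bar\zeta_{j}$-derivative, which by property (\ref{geometry-1}) applied one order higher costs an additional factor $1/\tau_{j}(\zeta,\varepsilon)$, producing the claimed $\varepsilon/(\tau_{i}\tau_{j})$ bound. The hard part here will be controlling the extra terms obtained by differentiating in $\bar\zeta_{j}$ the $\zeta$-dependent unitary transformation $l_{\zeta}$ and the holomorphic polynomial $A_{\zeta}$ in the definition of $\Phi_{\zeta}$; this requires the $\mathcal{C}^{\infty}$ dependence on $\zeta$ (stated in the remark following \prettyref{thm|Die-For_support_function}) together with an anisotropic control of these derivatives. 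The precise bookkeeping is carried out in \cite{Diederich-Fischer_Holder-linally-convex}, which is why I would cite that reference for (2) and (3).
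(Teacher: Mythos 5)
The paper supplies no proof of this lemma at all: item (1) is attributed directly to property~(\ref{geometry-1}) of \prettyref{sec|Geometry-of-lineally-convex}, and items (2)--(3) are simply cited from \cite{Diederich-Fischer_Holder-linally-convex}. So there is no argument in the paper to compare against; your task was really to reconstruct what those references do, and your sketch does that correctly. The localisation showing $\chi\equiv1$ (hence $S=S_{0}$ and $Q_{i}=Q_{i}^{0}$) for $\zeta\in P_{\varepsilon}(z)$ with $z$ close to $\partial\Omega$ and $\varepsilon$ small, the reduction of (1) to property~(\ref{geometry-1}) via the comparability $\tau_{i}(\zeta,\varepsilon)\simeq\tau_{i}(z,\varepsilon)$ on $P_{\varepsilon}(z)$, and the Hefer-type decomposition $Q_{i}^{0}(z,\zeta)=\int_{0}^{1}\partial_{z_{i}}S_{0}(\zeta+t(z-\zeta),\zeta)\,dt$ followed by differentiation of formula~\prettyref{eq|definition-formula-supp-func} are exactly the ingredients of the Diederich--Fischer estimates, and you correctly flag where the genuine technical work lies (controlling the $\bar\zeta_{j}$-derivatives of $l_{\zeta}$ and $A_{\zeta}$ for (3)).

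Two small points of precision. First, the bound $|z-\zeta|\lesssim\varepsilon^{1/2m}$ on $P_{\varepsilon}(z)$ does not come from property~(\ref{geometry-5}); it comes from the finite-type inequality $\tau(\zeta,v,\varepsilon)\lesssim\varepsilon^{1/2m}$ stated in the opening paragraph of \prettyref{sec|Geometry-of-lineally-convex}, property~(\ref{geometry-5}) only gives the comparison between different scales $\varepsilon$. Second, the ``engulfing'' comparability $\tau_{i}(z,\varepsilon)\simeq\tau_{i}(\zeta,\varepsilon)$ for $\zeta\in P_{\varepsilon}(z)$ is indeed standard (Conrad, Diederich--Fischer), but it is not among the five properties the paper explicitly recalls, so it deserves its own citation rather than being attributed to \prettyref{eq|polydiscs-pseudodistance}, which only controls rescalings in $\varepsilon$ at a fixed centre. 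Neither issue affects the correctness of the overall outline.
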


A straightforward calculus shows that\begin{eqnarray*}
\left(\partial_{\bar{\zeta}}Q\right)^{n-q} & = & \left(\frac{1}{\rho(\zeta)}\right)^{n-q}\sideset{}{'}\sum_{\SU{I=\left(i_{1},\ldots,i_{n-q}\right)\AS J=\left(j_{1},\ldots,j_{n-q}\right)}}\prod_{k=1}^{n-q}\frac{\partial Q_{i_{k}}(z,\zeta)}{\partial\overline{\zeta_{j_{k}}}}\bigwedge_{i\in I}d\zeta_{i}\bigwedge_{j\in J}d\overline{\zeta_{j}}\pm\\
 &  & \pm\left(\frac{1}{\rho(\zeta)}\right)^{n-q+1}\sideset{}{'}\sum_{\SU{I=\left(i_{1},\ldots,i_{n-q}\right)\AS J=\left(j_{1},\ldots,j_{n-q}\right)}}\sum_{k_{0}\in\left\{ 1,\ldots,n-q\right\} }\frac{\partial\rho(\zeta)}{\partial\overline{\zeta_{j_{k_{0}}}}}Q_{i_{k_{0}}}(z,\zeta)\prod_{\SU{1\leq k\leq n-q\AS k\neq k_{0}}}\frac{\partial Q_{i_{k}}(z,\zeta)}{\partial\overline{\zeta_{j_{k}}}}\bigwedge_{i\in I}d\zeta_{i}\bigwedge_{j\in J}d\overline{\zeta_{j}},\end{eqnarray*}
where $\sideset{}{'}\sum$ means that the $i_{r}$ (resp. $j_{r}$) are all different.

Suppose $\zeta\in P_{\varepsilon}^{0}=P_{\varepsilon}(z)\setminus P_{\nicefrac{\varepsilon}{2}}(z)$.
Then, by \prettyref{lem|lemma-2.1}, $\left|z-\zeta\right|^{2q}\left(\frac{1}{K_{0}}S(z,\zeta)+\rho(\zeta)\right)^{n-q+1}\gtrsim\varepsilon^{n-q+1}\left|z-\zeta\right|^{2q}$,
and, by \prettyref{lem|Lemma-2.5},\begin{eqnarray*}
\left|K^{1}(z,\zeta)\right| & \lesssim & \sideset{}{'}\sum_{\SU{I=\left(i_{1},\ldots,i_{n-q}\right)\AS J=\left(j_{1},\ldots,j_{n-q}\right)}}\frac{\left(\rho(\zeta)+\varepsilon\right)\varepsilon^{n-q}}{\prod_{k=1}^{n-q}\tau_{i_{k}}\tau_{j_{k}}\varepsilon^{n-q+1}\left|z-\zeta\right|^{2q-1}}\\
 & \lesssim & \sideset{}{'}\sum_{\SU{I=\left(i_{1},\ldots,i_{n-q}\right)\AS J=\left(j_{1},\ldots,j_{n-q}\right)}}\frac{1}{\prod_{k=1}^{n-q}\tau_{i_{k}}\tau_{j_{k}}\left|z-\zeta\right|^{2q-1}},\end{eqnarray*}
because $\delta_{\partial\Omega}(z)<\varepsilon$ and using \eqref{geometry-1}
of \prettyref{sec|Geometry-of-lineally-convex}.

The $\tau_{i}$ being supposed ordered increasingly, it is easy to
see that\[
\int_{P_{\varepsilon}^{0}}\frac{1}{\left|z-\zeta\right|^{2q-1}}\lesssim\int_{P_{\varepsilon}^{0}}\frac{1}{\left|z-\zeta\right|^{2q-1}+\varepsilon^{2q-1}}\lesssim\tau_{n-q+1}(z,\varepsilon)\prod_{i=1}^{n-q}\tau_{i}^{2}(z,\varepsilon),\]
and we obtain\[
\int_{P_{\varepsilon}^{0}}\left|K^{1}(z,\zeta)\right|\, dV(\zeta)\lesssim\tau_{n-q+1}(z,\varepsilon)=\mathrm{O}\left(\varepsilon^{\nicefrac{1}{m}}\right),\]
and, if we denote $P_{\varepsilon}^{i}=P_{\varepsilon}^{i}(z)=P\left(z,2^{-i}\varepsilon\right)\setminus P\left(z,2^{-(i+1)}\varepsilon\right)$,\[
\int_{P_{\varepsilon}}\left|K^{1}(z,\zeta)\right|\, dV(\zeta)=\sum_{i=0}^{\infty}\int_{P_{\varepsilon}^{i}}\left|K^{1}(z,\zeta)\right|\, dV(\zeta)\lesssim\sum_{i=0}^{\infty}\mathrm{O}\left(\left(\varepsilon2^{-i}\right)^{\nicefrac{1}{m}}\right)=\mathrm{O}\left(\varepsilon^{\nicefrac{1}{m}}\right),\]
which ends the proof of \prettyref{eq|uniform-integrability-condition}.

\medskip{}

To finish the proof of \prettyref{thm|L1-estimate_boundary_d-bar},
by Fubini's Theorem, we have to prove that\[
\int_{\partial\Omega}\left|K^{1}(z,\zeta)\wedge f(\zeta)\right|\, d\sigma(z)\lesssim\left\Vert f(\zeta)\right\Vert _{k},\]
and, by \prettyref{lem|lemma-2.1}, it is enough to see that, for
$\zeta$ near the boundary and $\eta$ small enough (to have the reduced
form of $K^{1}$),\[
\int_{\partial\Omega\cap P_{\eta}(\zeta)}\left|K^{1}(z,\zeta)\wedge f(\zeta)\right|\, d\sigma(z)\lesssim\left\Vert f(\zeta)\right\Vert _{k}.\]

To see it, let us denote $\varepsilon=\delta_{\partial\Omega}(\zeta)$,
$Q_{\varepsilon}^{0}(\zeta)=P_{\varepsilon}(\zeta)$ and $Q_{\varepsilon}^{i}(\zeta)=P_{2^{i}\varepsilon}(\zeta)\setminus P_{2^{i-1}\varepsilon}(\zeta)$,
$i\geq1$, and let us estimate\[
\int_{\partial\Omega\cap Q_{\varepsilon}^{i}(\zeta)}\left|K^{1}(z,\zeta)\wedge f(\zeta)\right|\, d\sigma(z)\]
using the coordinate system associated to the $\left(\zeta,2^{i}\varepsilon\right)$-extremal
basis. By the properties of the norm $\left\Vert .\right\Vert _{k}$
it is enough to prove the estimate when the form $f$ is (in the extremal
coordinate system) $f=\left(d\overline{\zeta}\right)^{I}$, with $\left|I\right|=q$.
Then, denoting $I=\left(i_{1},\ldots,i_{q}\right)$ and $J=\left(j_{1},\ldots,j_{n-q}\right)$,
$I\cup J=\left\{ 1,\ldots,n\right\} $, $K^{1}(z,\zeta)\wedge f(\zeta)$
is a sum of expressions of the form $\frac{W_{i}}{D}$, $i=1,2$,
with \[
D(\zeta,z)=\left|z-\zeta\right|^{2q}\left(\frac{1}{K_{0}}S(z,\zeta)+\rho(\zeta)\right)^{n-q+1}\]
and\[
W_{1}=\left(\zeta_{m}-z_{m}\right)\rho(\alpha)\prod_{k=1}^{n-q}\frac{\partial Q_{i_{k}}(z,\zeta)}{\partial\overline{\zeta_{j_{k}}}}\bigwedge_{i=1}^{n}\left(d\zeta_{i}\wedge d\overline{\zeta_{i}}\right)\bigwedge_{\SU{l\in L\AS\left|L\right|=q-1}}d\overline{z_{l}}\]
and, if $1\notin I$,\[
W_{2}=\left(\zeta_{m}-z_{m}\right)\frac{\partial\rho(\zeta)}{\partial\overline{\zeta_{1}}}Q_{i_{k_{0}}}(\zeta,z)\prod_{\SU{1\leq k\leq n-q\AS k\neq k_{0}}}\frac{\partial Q_{i_{k}}(z,\zeta)}{\partial\overline{\zeta_{j_{k}}}}\bigwedge_{i=1}^{n}\left(d\zeta_{i}\wedge d\overline{\zeta_{i}}\right)\bigwedge_{\SU{l\in L\AS\left|L\right|=q-1}}d\overline{z_{l}}.\]

Now, remarking that\[
\int_{Q_{\varepsilon}^{i}(\zeta)\cap\partial\Omega}\frac{d\sigma(z)}{\left|\zeta-z\right|^{2q-1}}\lesssim2^{i}\varepsilon\tau_{n-q+1}\left(\zeta,2^{i}\varepsilon\right)\prod_{i=2}^{n-q}\tau_{i}^{2}\left(\zeta,2^{i}\varepsilon\right),\]
we get, using property (\ref{geometry-5}) of \prettyref{sec|Geometry-of-lineally-convex},\[
\int_{Q_{\varepsilon}^{i}(\zeta)\cap\partial\Omega}\left|\frac{W_{1}(\zeta,z)}{D(\zeta,z)}\wedge f(\zeta)\right|\, d\sigma(z)\lesssim\frac{\rho(\zeta)}{2^{i}\varepsilon}\min_{j\in I}\frac{\tau_{j}\left(\zeta,2^{i}\varepsilon\right)}{2^{i}\varepsilon}\lesssim\frac{\rho(\zeta)}{2^{i}\varepsilon}\min_{j\in I}\frac{\tau_{j}(\zeta,\varepsilon)}{\varepsilon}\lesssim\min_{j\in I}\frac{\tau_{j}\left(\zeta,\delta_{\partial\Omega}(\zeta)\right)}{2^{i}\delta_{\partial\Omega}(\zeta)}\]
and, because $1\notin I$,\[
\int_{Q_{\varepsilon}^{i}(\zeta)\cap\partial\Omega}\left|\frac{W_{2}(\zeta,z)}{D(\zeta,z)}\wedge f(\zeta)\right|\, d\sigma(z)\lesssim\min_{j\in I}\frac{\tau_{j}\left(\zeta,2^{i}\varepsilon\right)}{2^{i}\varepsilon}\lesssim\min_{j\in I}\frac{\tau_{j}(\zeta,\delta_{\partial\Omega}(\zeta))}{2^{\nicefrac{i}{2}}\delta_{\partial\Omega}(\zeta)},\]
and the proof is complete.

\section{\label{sec|Nonisotropic-estimates-of-positive-currents}Nonisotropic
estimates of closed positive currents on geometrically separated domains}

In this Section, $\Omega$ is a pseudo-convex domain in $\mathbb{C}^{n}$
with $\mathcal{C}^{\infty}$ boundary which is geometrically separated
at a point $p_{0}\in\partial\Omega$. To state the main result of
this Section we fix some notations.

We denote by $\rho$ a defining function of $\Omega$ and $N=\frac{1}{\left|\nabla\rho\right|^{2}}\sum_{i}\frac{\partial\rho}{\partial\overline{z_{i}}}dz_{i}$
the unit normal defined in a neighborhood $U$ of $\partial\Omega$.
Recall (see \cite{CD08}) that the hypothesis made on $\Omega$ means
that there exists a constant $K>0$, a neighborhood $V=V(p_{0})\subset U$
of $p_{0}$ and a $(n-1)$-dimensional complex vector space $E^{0}$
of $(1,0)$-vector fields $\mathcal{C}^{\infty}$ in $V$, tangent
to $\rho$ (i.e. $L(\rho)\equiv0$ in $V$ for $L\in E^{0}$) such
that, at every point $p$ of $V\cap\overline{\Omega}$ and for every
$0<\delta<\delta_{0}$ there exists a $\left(K,p,\delta)\right)$-extremal
(or $\left(p,\delta\right)$-extremal) basis whose elements belong
to $E^{0}$. We will denote by $E^{1}$ the complex vector space generated
by $E^{0}$ and $N$.

For $L\in E^{1}$, $0<\varepsilon<\delta_{0}$ and $z\in V\cap\overline{\Omega}$,
in \cite{CD08} we defined the weight $F(L,z,\varepsilon)$ as follows:

If $L=L_{\tau}+a_{n}N$, the weight is defined by\[
F(L,z,\varepsilon)=\sum_{\mathcal{L}\in\mathcal{L}_{M}(L)}\left|\frac{\mathcal{L}(\partial\rho)(z)}{\varepsilon}\right|^{2/\left|\mathcal{L}\right|}+\frac{\left|a_{n}\right|^{2}}{\varepsilon^{2}},\]
where $M$ is an integer larger than the type, $\mathcal{L}_{M}(L)$
denotes the set of lists $\left(L^{1},\ldots L^{k}\right)$, of length
$k\leq M$, $L^{j}\in\left\{ L_{\tau},\overline{L_{\tau}}\right\} $,
and\[
\mathcal{L}(\partial\rho)(z)=L^{1}\ldots L^{k-2}\left(\left\langle \partial\rho,\left[L^{k-1},L^{k}\right]\right\rangle \right)(z).\]

\begin{rem*}
To keep the same notations as in \cite{CD08}, the normal is denoted
by $L_{n}$ in opposite of the preceding Section where it was denoted
by $L_{1}$.
\end{rem*}
Now we denote $\tau\left(L,z,\varepsilon\right)=F\left(L,z,\varepsilon\right)^{-\nicefrac{1}{2}}$.
If $v$ is any non zero vector in $\mathbb{C}^{n}$, and $z\in V\cap\overline{\Omega}$,
there exists a unique vector field $L=L_{z}$ in $E^{1}$ such that
$L(z)=v$. Then we denote\[
\tau\left(z,v,\varepsilon\right)=\tau\left(L,z,\varepsilon\right)\mbox{ and }k\left(z,v,\varepsilon\right)=\frac{\delta_{\partial\Omega}(z)}{\tau\left(z,v,\varepsilon\right)},\]
 where $\delta_{\partial\Omega}(z)$ is the distance from $z$ to
the boundary of $\Omega$. When $\varepsilon=\delta_{\partial\Omega}(z)$
we denote $\tau\left(z,v\right)=\tau\left(z,v,\delta_{\partial\Omega}(z)\right)$
and $k\left(z,v\right)=k\left(z,v,\delta_{\partial\Omega}(z)\right)$.

To simplify the exposition, we will also use the following terminology.
For each point $z\in V\cap\overline{\Omega}$ and each $0<\delta\leq\delta_{0}$,
if $\left(v_{i}\right)_{1\leq i\leq n}$ is a basis of $\mathbb{C}^{n}$
such that $v_{n}=N(z)$ and $\left(L_{i}\right)_{1\leq i\leq n-1}$
($L_{i}\in E^{0}$) is a $\left(z,\delta\right)$-extremal basis such
that $L_{i}(z)=v_{i}$ we will say that $\left(v_{i}\right)_{1\leq i\leq n}$
is a $\left(z,\delta\right)$-extremal basis.

Suppose now that $\Theta$ is a $\mathcal{C}^{\infty}$ $\left(1,1\right)$-form
defined in $V\cap\overline{\Omega}$. Then, following \cite{Bruna-Charp-Dupain-Annals},
we define, for $z\in V\cap\overline{\Omega}$, \[
\left\Vert \Theta(z)\right\Vert _{k}=\left\Vert \Theta(z)\right\Vert _{k}^{p_{0}}=\sup_{u,v\in\mathbb{C}^{*}}\frac{\left|\Theta(z)\left(u,v\right)\right|}{k(z,u)k(z,v)}\]
 and\[
\left\Vert \Theta(z)\right\Vert _{E}=\sup_{u,v\in\mathbb{C}^{*}}\frac{\left|\Theta(z)\left(u,v\right)\right|}{\left\Vert u\right\Vert \left\Vert v\right\Vert },\]
 where $\left\Vert .\right\Vert $ denotes the euclidean norm.

Similarly, with the above notations, we extend the notations defined
in formulas \prettyref{eq|anisotropic_distance_for_0-q_forms} and
\prettyref{eq|k-morm-measure} for $\left(0,q\right)$-forms to forms
defined in a geometrically separated domain.

The aim of the Section is to prove the following result:
\begin{stthm}
\label{thm|theorem3.1}Let $\Omega$ be a pseudo-convex domain in
$\mathbb{C}^{n}$ which is geometrically separated at a boundary point
$p_{0}$. Then there exist two neighborhoods $V$ and $W\subset V$
of $p_{0}$ and a constant $C>0$ such that, if $\Theta$ is a smooth
positive closed $\left(1,1\right)$-form defined in $V$ then\[
\int_{W}\delta_{\partial\Omega}(z)\left\Vert \Theta(z)\right\Vert _{k}dV(z)\leq C\int_{V}\delta_{\partial\Omega}(z)\left\Vert \Theta(z)\right\Vert _{E}dV(z).\]

\end{stthm}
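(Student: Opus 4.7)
The plan is to imitate the strategy of Bruna--Charpentier--Dupain for the convex case, replacing the convex geometry by the commutator weight structure $F(L,z,\varepsilon)$ that defines geometric separation, and using positivity plus closedness of $\Theta$ to perform integrations by parts.

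First I would reduce to diagonal components. Fix $z\in W$ close to $\partial\Omega$ and a $(z,\delta_{\partial\Omega}(z))$--extremal basis $(v_{1},\ldots,v_{n})$ with $v_{n}=N(z)$. Writing $\Theta(z)=\sum\Theta_{ij}(z)\,i\,v_{i}^{*}\wedge\bar v_{j}^{*}$ and applying Cauchy--Schwarz to the positive hermitian matrix $(\Theta_{ij})$, one has $|\Theta_{ij}|\leq \Theta_{ii}^{1/2}\Theta_{jj}^{1/2}$. Using property \eqref{geometry-3} and the characterization $k(z,v)\simeq\sum|a_{i}|k(z,v_{i})$ when $v=\sum a_{i}v_{i}$, this gives
\[
\|\Theta(z)\|_{k}\lesssim \max_{i}\Theta_{ii}(z)\,\frac{\tau_{i}(z,\delta_{\partial\Omega}(z))^{2}}{\delta_{\partial\Omega}(z)^{2}}.
\]
The normal term $i=n$ has $\tau_{n}=\delta_{\partial\Omega}(z)$, so it contributes at most $\Theta_{nn}(z)\lesssim\|\Theta(z)\|_{E}$ and is already pointwise controlled. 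Thus the theorem reduces to showing, for each tangential index $i<n$,
\[
\int_{W}\Theta(L_{i},\bar L_{i})(z)\,\frac{\tau_{i}(z,\delta_{\partial\Omega}(z))^{2}}{\delta_{\partial\Omega}(z)}\,dV(z)\lesssim\int_{V}\delta_{\partial\Omega}(z)\,\|\Theta(z)\|_{E}\,dV(z),
\]
where $L_{i}\in E^{0}$ is the extremal vector field representing $v_{i}$.

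Next I would unfold $\tau_{i}^{-2}$ through the weight. By definition
\[
\frac{1}{\tau_{i}(z,\delta_{\partial\Omega}(z))^{2}}=F(L_{i},z,\delta_{\partial\Omega}(z))=\sum_{\mathcal{L}\in\mathcal{L}_{M}(L_{i})}\Bigl|\frac{\mathcal{L}(\partial\rho)(z)}{\delta_{\partial\Omega}(z)}\Bigr|^{2/|\mathcal{L}|},
\]
so it suffices to prove, for each list $\mathcal{L}=(L^{1},\ldots,L^{k})$ with $L^{j}\in\{L_{i},\bar L_{i}\}$ and $k\leq M$,
\[
\int_{W}\Theta(L_{i},\bar L_{i})(z)\,\delta_{\partial\Omega}(z)^{1-2/k}\,|\mathcal{L}(\partial\rho)(z)|^{2/k}\,dV(z)\lesssim\int_{V}\delta_{\partial\Omega}(z)\,\|\Theta(z)\|_{E}\,dV(z).
\]
After applying H\"older's inequality in the form $ab^{2/k}\leq a^{(k-2)/k}(ab)^{2/k}\cdot\text{(dummy)}$ (distributing the exponent $2/k$ on $|\mathcal{L}(\partial\rho)|$ against $k-1$ harmless copies of $\Theta(L_{i},\bar L_{i})\cdot\delta_{\partial\Omega}$), this further reduces to an estimate of the type
\[
\int_{W}\Theta(L_{i},\bar L_{i})(z)\,|\mathcal{L}(\partial\rho)(z)|\,dV(z)\lesssim\int_{V}\|\Theta(z)\|_{E}\,dV(z).
\]
This is the heart of the proof, and the closedness of $\Theta$ enters here: writing $\mathcal{L}(\partial\rho)=L^{1}\cdots L^{k-2}\langle\partial\rho,[L^{k-1},L^{k}]\rangle$ and recognizing that $\langle\partial\rho,[L^{k-1},L^{k}]\rangle$ is (up to sign and a lower order term) a coefficient of the Levi form, I would integrate by parts successively to move the vector fields $L^{1},\ldots,L^{k-2}$ off $\partial\rho$ and onto the current $\Theta$. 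Because $d\Theta=0$, each such integration by parts produces either another coefficient $\Theta(L',\bar L'')$ bounded by $\|\Theta\|_{E}$, or a term involving $\partial\rho$ itself, which by the defining property $\rho=-\delta_{\partial\Omega}$ (to first order) and the cutoff of $W$ inside $V$ yields a factor $\delta_{\partial\Omega}$ that absorbs the power of $\delta_{\partial\Omega}$ on the right. A smooth cutoff equal to $1$ on $W$ and supported in $V$ handles the boundary terms at $\partial W$.

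The main obstacle will be the last step: making the iterated integration by parts truly uniform in the vector field $L_{i}$, the list $\mathcal{L}$, the point $z$, and the scale $\varepsilon=\delta_{\partial\Omega}(z)$. One has to track all commutators $[L^{j},L^{j'}]$ and $[L^{j},\bar L^{j'}]$ produced when derivatives are pushed onto $\Theta$; each such commutator decomposes in the basis $E^{1}$ with coefficients controlled, via the definition of $F$, by new iterated weights of smaller length. The bookkeeping is the same as in the convex case (\cite{Bruna-Charp-Dupain-Annals}) but carried out with respect to the minimal extremal basis of $E^{0}$ rather than a maximal convex one, and one has to verify that the pointwise estimates on $|\mathcal{L}(\partial\rho)|$ coming from geometric separation remain valid after the successive differentiations. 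Once this combinatorial/commutator lemma is in place, summing over $\mathcal{L}\in\mathcal{L}_{M}(L_{i})$ and over the finitely many indices $i$ yields the theorem with $W\Subset V$ and a constant $C$ depending only on $\Omega$ and the separation constant $K$.
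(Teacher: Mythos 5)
Your opening reduction --- Cauchy--Schwarz on the positive matrix $(\Theta_{ij})$ in a $(z,\delta_{\partial\Omega}(z))$--extremal basis, giving $\left\Vert \Theta(z)\right\Vert _{k}\simeq\sum_{i}\Theta(v_{i},v_{i})\,\tau_{i}^{2}/\delta_{\partial\Omega}^{2}$ --- is sound and matches the corresponding lemma in the paper. But the next step, ``unfolding $\tau_{i}^{-2}$ through the weight,'' is algebraically in the wrong direction and the whole chain fails there. You have $\tau_{i}^{-2}=F(L_{i},z,\delta)=\sum_{\mathcal{L}}\left|\mathcal{L}(\partial\rho)/\delta\right|^{2/|\mathcal{L}|}$, so picking out a single term gives an upper bound for $\tau_{i}^{2}$ of the form $\tau_{i}^{2}\lesssim\delta^{2/k}\,|\mathcal{L}(\partial\rho)|^{-2/k}$, with the derivative quantity in the \emph{denominator}. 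The displayed target $\Theta_{ii}\,\delta^{1-2/k}\,|\mathcal{L}(\partial\rho)|^{2/k}$, with it in the numerator, is not what results from this substitution. Moreover, even if one rearranges and then applies H\"older as you suggest, the second factor that appears is $\bigl(\int\Theta_{ii}\,|\mathcal{L}(\partial\rho)|\bigr)^{2/k}$, which has no $\delta$--weight. Your final reduced claim $\int_{W}\Theta_{ii}\,|\mathcal{L}(\partial\rho)|\,dV\lesssim\int_{V}\left\Vert \Theta\right\Vert _{E}dV$ is in fact trivially true by positivity alone (since $\Theta_{ii}\leq\left\Vert \Theta\right\Vert _{E}$ and $\mathcal{L}(\partial\rho)$ is bounded), so it cannot carry the content of the theorem; and the unweighted mass $\int\left\Vert \Theta\right\Vert _{E}dV$ is not controlled by the Blaschke mass $\int\delta_{\partial\Omega}\left\Vert \Theta\right\Vert _{E}dV$, so the chain does not close. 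In short, the closedness of $\Theta$ never gets used where it is needed.

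The paper's mechanism is structurally different, and the difference is not cosmetic. It builds a Whitney-type decomposition of a boundary layer into anisotropic boxes $Q_{j}$ (and dilates $Q_{j}^{*}$) using the polydiscs of the pseudodistance, with bounded overlap; it then applies Stokes' formula, on $\Omega$, to the $(2n-1)$--form $(-\rho)^{2/m}\,\varphi\,\Theta\wedge\eta_{l}$, where $\eta_{l}$ is a $(n-1,n-1)$ wedge of adapted coordinate differentials and $\varphi$ is a product cutoff tailored to $Q_{j}^{*}$ at the scale of the extremal radii $\tau(z_{j},v_{i})$. Closedness and positivity of $\Theta$ enter precisely in this Stokes identity. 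The upshot is not a direct bound but a \emph{self-improving} inequality of the form
\[
\int_{Q_{j}}\delta_{\partial\Omega}\left\Vert \Theta\right\Vert _{k}\,dV\;\leq\;\bigl[\beta(\alpha)+\mathrm{O}(\mathrm{diam}\,Q_{j})/\alpha+\varepsilon C(\alpha)\bigr]\int_{Q_{j}^{*}}\delta_{\partial\Omega}\left\Vert \Theta\right\Vert _{k}\,dV+C(\varepsilon)C(\alpha)\int_{Q_{j}^{*}}\delta_{\partial\Omega}\left\Vert \Theta\right\Vert _{E}\,dV,
\]
which is then summed over $j$ using the bounded overlap and absorbed by choosing first $\alpha$, then $\varepsilon$, then $k_{0}$ in sequence. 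This absorption scheme is the essential idea that lets the weak $\delta$--weighted Euclidean mass dominate the anisotropic $k$--norm; your proposal has no analogue of it and cannot compensate by a one-shot integration by parts.

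A secondary, more technical concern: the iterated commutators $[L^{j},L^{j'}]$ that you propose to track live in $E^{1}$, not in the extremal basis directions, and their coefficients need the comparison lemmas (the paper's Lemmas establishing $\tau(w,v,\varepsilon)\simeq\tau(z,v,\varepsilon)$ inside a polydisc and along $\pi_{\delta}$). You gesture at this, but it is those lemmas --- not the commutator bookkeeping --- together with the Whitney/Stokes absorption that constitute the proof.
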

If $\Omega$ is a bounded pseudo-convex domain which is geometrically
separated at every point of it's boundary, we choose a finite number
of points $p_{i}$, $1\leq i\leq N$, in $\partial\Omega$ such that
the set of the neighborhoods $V_{i}=V\left(p_{i}\right)$ is a covering
$\mathcal{V}$of $\partial\Omega$, $V_{0}=\Omega\setminus\bigcup_{i\geq1}V_{i}$,
and we define\[
\left\Vert \Theta(z)\right\Vert _{k}^{\mathcal{V}}=\max_{z\in V_{i}}\left\Vert \Theta(z)\right\Vert _{k}^{i},\]
where, $\left\Vert \Theta(z)\right\Vert _{k}^{i}=\left\Vert \Theta(z)\right\Vert _{k}^{p_{i}}$
if $i\geq1$ and $\left\Vert \Theta(z)\right\Vert _{k}^{0}=\left\Vert \Theta(z)\right\Vert _{E}$.

With this notation, the local result immediately implies a global
one:
\begin{stthm}
\label{thm|theorem-global-estimate-currents}If $\Omega$ is a pseudo-convex
domain in $\mathbb{C}^{n}$ which is geometrically separated at every
point of it's boundary, a covering $\mathcal{V}$ of $\partial\Omega$
being chosen, there exists a constant $C>0$ such that\[
\int_{\Omega}\delta_{\partial\Omega}(z)\left\Vert \Theta(z)\right\Vert _{k}^{\mathcal{V}}dV(z)\leq C\int_{\Omega}\delta_{\partial\Omega}(z)\left\Vert \Theta(z)\right\Vert _{E}dV(z)\]
 for all smooth closed positive $\left(1,1\right)$-form $\Theta$
in $\Omega$.
\end{stthm}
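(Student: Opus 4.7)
The strategy is to deduce the global inequality from \prettyref{thm|theorem3.1} by a standard finite-covering argument. For every $p\in\partial\Omega$ the local theorem furnishes neighborhoods $W(p)\subset V(p)$ on which the local estimate holds. By compactness of $\partial\Omega$ I select $p_{1},\dots,p_{N}\in\partial\Omega$ such that $\partial\Omega\subset\bigcup_{i=1}^{N}W(p_{i})$, set $V_{i}=V(p_{i})$ and $W_{i}=W(p_{i})$, and introduce $K=\overline{\Omega}\setminus\bigcup_{i\ge1}W_{i}$. Since the $W_{i}$ cover $\partial\Omega$, the set $K$ is closed and disjoint from $\partial\Omega$, hence a compact subset of $\Omega$, so $\delta_{\partial\Omega}\ge\delta_{0}>0$ on $K$; note also that $V_{0}\subset K$.

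I then split
\[
\int_{\Omega}\delta_{\partial\Omega}(z)\|\Theta(z)\|_{k}^{\mathcal{V}}\,dV(z)\le\int_{K}\delta_{\partial\Omega}\|\Theta\|_{k}^{\mathcal{V}}\,dV+\sum_{i=1}^{N}\int_{W_{i}}\delta_{\partial\Omega}\|\Theta\|_{k}^{\mathcal{V}}\,dV.
\]
The first term is trivial: $\tau(z,v,\delta_{\partial\Omega}(z))$ is uniformly bounded above on $\overline{\Omega}$, so the weight $k(z,v)=\delta_{\partial\Omega}(z)/\tau(z,v,\delta_{\partial\Omega}(z))$ is bounded below on $K$ by a positive constant; homogeneity of $k$ in $v$ then gives $\|\Theta(z)\|_{k}^{i}\le C_{K}\|\Theta(z)\|_{E}$ for every admissible $i\ge1$ and every $z\in K$, while on $K\cap V_{0}$ the $\mathcal{V}$-norm equals $\|\Theta\|_{E}$ by definition. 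Hence $\int_{K}\delta_{\partial\Omega}\|\Theta\|_{k}^{\mathcal{V}}\,dV\le C_{K}\int_{\Omega}\delta_{\partial\Omega}\|\Theta\|_{E}\,dV$.

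For each $W_{i}$ I invoke the intrinsic character of the extremal pseudo-distance on a geometrically separated domain---recorded in the remark following property \eqref{geometry-5} of \prettyref{sec|Geometry-of-lineally-convex} and proved in \cite{CD08}---which ensures that the weights $k(z,\cdot)$, and hence the local norms $\|\cdot\|_{k}^{j}$ attached to different base points $p_{j}$, are mutually equivalent on each overlap $V_{i}\cap V_{j}$, with a constant depending only on the covering. Consequently, for $z\in W_{i}$,
\[
\|\Theta(z)\|_{k}^{\mathcal{V}}=\max_{j\,:\,z\in V_{j}}\|\Theta(z)\|_{k}^{j}\le C\,\|\Theta(z)\|_{k}^{i},
\]
and \prettyref{thm|theorem3.1} applied at $p_{i}$ yields $\int_{W_{i}}\delta_{\partial\Omega}\|\Theta\|_{k}^{i}\,dV\le C_{i}\int_{V_{i}}\delta_{\partial\Omega}\|\Theta\|_{E}\,dV\le C_{i}\int_{\Omega}\delta_{\partial\Omega}\|\Theta\|_{E}\,dV$. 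Summing the $N$ contributions and combining with the bound on $K$ produces the asserted global inequality with a constant depending only on $\Omega$ and the chosen covering. The only non-routine step is the mutual equivalence of the local norms on overlaps, which is precisely the intrinsicness property cited above and is what makes the maximum-over-$\mathcal{V}$ definition of $\|\cdot\|_{k}^{\mathcal{V}}$ a sensible global object.
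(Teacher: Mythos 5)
Your argument is correct, and it is the ``immediate'' deduction of \prettyref{thm|theorem-global-estimate-currents} from \prettyref{thm|theorem3.1} that the paper alludes to in the sentence preceding the statement. The paper's own written proof, however, is organized differently: the ``Third step'' establishes the local and the global inequalities \emph{simultaneously} by summing the Stokes inequality on the anisotropic boxes $Q_j$ over the whole covering at once, and absorbing the $\left\Vert\Theta\right\Vert_k$-term on the right into the left by choosing $\alpha$, $\varepsilon$, $k_0$ so small that the error factor times the overlap constant $MN$ is less than one. Your version treats \prettyref{thm|theorem3.1} as a black box and is therefore more modular, but it quietly transfers the burden onto one extra ingredient: the mutual equivalence, on overlaps $V_i\cap V_j$, of the local weights $k_i$ and $k_j$ attached to different base points $p_i$, $p_j$. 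You cite the remark after property \eqref{geometry-5} for this, but be aware that that remark is stated for lineally convex domains; for general geometrically separated domains this equivalence must be taken from \cite{CD08} (it is what makes the pseudo-distance intrinsic and the definition of $\left\Vert\cdot\right\Vert_k^{\mathcal{V}}$ sensible), and it is worth stating explicitly, since \prettyref{thm|theorem-global-estimate-currents} is formulated in exactly that generality. Apart from this mild mis-citation the argument is sound; in particular your treatment of the compact interior piece $K$ (using $\delta_{\partial\Omega}\geq\delta_0$ and the upper bound $\tau(z,v,\varepsilon)\lesssim\varepsilon^{\nicefrac{1}{2m}}$ to bound $k$ from below, hence $\left\Vert\cdot\right\Vert_k$ by $\left\Vert\cdot\right\Vert_E$) is exactly what the paper glosses over when it concludes the Third step with an estimate only over $\{\delta_{\partial\Omega}\leq\delta_0\}$.
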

In fact the two statements are equivalent. In \cite{CD08} it is proved
that if $\Omega$ is geometrically separated at $p_{0}\in\partial\Omega$
then there exists a bounded pseudo-convex domain $D$ with $\mathcal{C}^{\infty}$
boundary contained in $\Omega$ which is geometrically separated at
every point of it's boundary and whose boundary contains a neighborhood
of $p_{0}$ in the boundary of $\Omega$. Thus, \prettyref{thm|theorem3.1}
for $\Omega$ follows immediately \prettyref{thm|theorem-global-estimate-currents}.

\medskip{}

We now prove the Theorems. As it is very similar to the proof of the
same result for convex domains given in Section 2 of \cite{Bruna-Charp-Dupain-Annals},
we will refer to that paper for many details. Precisely, we will only
give the mains articulations and the proof of Lemmas where the differences
due to the fact that the weights $F$ are defined with vector fields
instead of coordinates systems are relevant.

We still use the previous notations for the defining function $\rho$
of $\Omega$, the point $p_{0}\in\partial\Omega$, the neighborhood
$V$ of $p_{0}$ and $W$ a neighborhood of $p_{0}$ relatively compact
in $V$.

In Section 3.3 of \cite{CD08} it is shown that there exists a $\left(z,\varepsilon\right)$-adapted
coordinate system $\left(\xi_{i}\right)_{i}$ used, in particular,
to define {}``polydiscs'' centered at the point $z$ by\[
P_{\varepsilon}(z)=P(z,\varepsilon)=\left\{ q=\left(\xi_{i}\right)_{i}\mbox{ such that }\left|\xi_{i}\right|\leq c\tau(z,v_{i},\varepsilon)\right\} ,\]
where $v_{i}=L_{i}(z)$, $1\leq i\leq n$, $\left(L_{i}\right)_{1\leq i\leq n}$
being the $\left(z,\varepsilon\right)$-extremal basis, and $c$ a
sufficiently small constant (depending on $\Omega$). Moreover, the
set of these polydiscs are associated to a pseudo-distance. With these
notations we have
\begin{sllem}
\label{lem|lemma-1.2}For $z\in V\cap\overline{\Omega}$ and $v\in\mathbb{C}^{n}$,
$v\neq0$, if $w\in P_{\varepsilon}(z)$ we have $\tau(w,v,\varepsilon)\simeq\tau(z,v,\varepsilon)$.\end{sllem}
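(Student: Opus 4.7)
The plan is to reduce the statement to a stability property of the weight $F$ under movement of the base point inside a polydisc. By definition $\tau(z,v,\varepsilon) = F(L_z,z,\varepsilon)^{-1/2}$ where $L_z \in E^1$ is the unique vector field with $L_z(z) = v$, and similarly $\tau(w,v,\varepsilon) = F(L_w,w,\varepsilon)^{-1/2}$ with $L_w(w) = v$. The claim therefore amounts to showing $F(L_z,z,\varepsilon) \simeq F(L_w,w,\varepsilon)$ whenever $w \in P_\varepsilon(z)$.

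The heart of the argument is the following stability lemma: for any fixed vector field $L \in E^1$ and any $w \in P_\varepsilon(z)$, one has $F(L,w,\varepsilon) \simeq F(L,z,\varepsilon)$. To prove it, I would decompose $L = L_\tau + a_n N$ and treat each ingredient of $F$ separately. For an iterated-bracket term $\mathcal{L}(\partial\rho)(w)$, Taylor-expand around $z$ along the $(z,\varepsilon)$-extremal basis $(L_i)$: each derivative produces another iterated bracket whose size at $z$ is controlled by products of $\varepsilon / \tau_i(z,\varepsilon)$ (the bracket-weight estimates recorded in \cite{CD08}), while the components of $w - z$ in the extremal basis are of size at most $c\,\tau_i(z,\varepsilon)$ by definition of $P_\varepsilon(z)$. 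A careful count of powers then shows that every correction term in the Taylor expansion can be absorbed into $F(L,z,\varepsilon)^{|\mathcal{L}|/2}$ up to a universal constant. The normal component $a_n$ is handled trivially by the smoothness of $N$ and the uniform bound on $|w-z|$.

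Granted this stability, the passage to the $\tau$-comparison is essentially linear-algebraic. Applying the stability to both $L_z$ and $L_w$ (and using that $L_z$ and $L_w$ themselves are fixed vector fields in $E^1$), it suffices to compare $F(L_z,z,\varepsilon)$ and $F(L_w,z,\varepsilon)$ at the common base point $z$. Expanding $L_z = \sum a_i L_i$ and $L_w = \sum b_i L_i$ in the $(z,\varepsilon)$-extremal basis, the extremal-basis norm equivalence proved in \cite{CD08} gives $F(L_z,z,\varepsilon)^{1/2} \simeq \sum_i |a_i|/\tau_i(z,\varepsilon)$ and similarly for $L_w$. The coefficients are linked by the identity $v = \sum_i a_i L_i(z) = \sum_i b_i L_i(w)$; since the $L_i$ are smooth and $w - z$ has anisotropically small components (again by $w \in P_\varepsilon(z)$), a direct computation yields $\sum_i |a_i|/\tau_i(z,\varepsilon) \simeq \sum_i |b_i|/\tau_i(z,\varepsilon)$.

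The main obstacle is the stability lemma of the second paragraph: it is the intrinsic, vector-field counterpart of the coordinate-derivative stability used in the lineally convex setting (property \eqref{geometry-1} of \prettyref{sec|Geometry-of-lineally-convex}), and its proof requires the full bracket-weight machinery of \cite{CD08}. Every subsequent step is formal, relying only on the extremal-basis norm equivalence and the pseudo-distance properties of the polydiscs already established.
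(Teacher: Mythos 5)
Your overall plan---reduce to comparing $F(L_z,z,\varepsilon)$ with $F(L_w,w,\varepsilon)$, then split the problem into a base-point-stability statement for $F$ at fixed $L$ and a coefficient-comparison at the common base point $z$---is exactly the structure of the paper's proof. The paper's entire argument consists of two applications of Proposition~3.6 of \cite{CD08}: once, via the matrix $\bigl(a_j^i(w)\bigr)_{i,j}$ expressing the $(z,\varepsilon)$-extremal vector fields in the $(z,\varepsilon)$-adapted coordinate frame, to show $F(L_z,z,\varepsilon)\simeq F(L_w,z,\varepsilon)$; and a second time to obtain $F(L_w,z,\varepsilon)\simeq F(L_w,w,\varepsilon)$, which is precisely your stability lemma.

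Where your proposal has a genuine gap is in the coefficient-comparison step, which you dismiss as a ``direct computation.'' Writing $L_z=\sum_i a_iL_i$, $L_w=\sum_i b_iL_i$, the relation $\sum_i a_iL_i(z)=v=\sum_i b_iL_i(w)$ translates, in the $(z,\varepsilon)$-adapted coordinates $\xi$ with $L_j(\cdot)=\sum_i a_j^i(\cdot)\,\partial/\partial\xi_i$ and $a_j^i(z)=\delta_j^i$, into $a_j=\sum_i b_i\,a_i^j(w)$. To conclude $\sum_j |a_j|/\tau_j(z,\varepsilon)\simeq\sum_j|b_j|/\tau_j(z,\varepsilon)$ one needs the anisotropic matrix estimate
\[
\bigl|a_i^j(w)\bigr|\lesssim \frac{F_i^{\nicefrac{1}{2}}(z,\varepsilon)}{F_j^{\nicefrac{1}{2}}(z,\varepsilon)}=\frac{\tau_j(z,\varepsilon)}{\tau_i(z,\varepsilon)}.
\]
This does not follow from smoothness of the $L_i$ together with the (anisotropic) smallness of the components of $w-z$: smoothness only gives $\bigl|\partial a_i^j/\partial\xi_k\bigr|\lesssim 1$, hence $\bigl|a_i^j(w)-\delta_i^j\bigr|\lesssim\sum_k\tau_k(z,\varepsilon)\lesssim\tau_n(z,\varepsilon)$, which is far too weak when $\tau_j\ll\tau_i$. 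The required anisotropic bound is exactly Proposition~3.6 of \cite{CD08} and needs the same bracket-weight machinery you invoke for the stability lemma; you need that machinery in both steps, not just one. Once you replace the ``direct computation'' with the explicit Proposition~3.6 estimate, your proof coincides with the paper's.
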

\begin{proof}
Let $L_{z}$ (resp. $L_{w}$) the vector field belonging to $E_{1}$
such that $L_{z}(z)=v$ (resp. $L_{w}(w)=v$). Let $\left(L_{i}\right)_{1\leq i\leq n}$
be the $\left(z,\varepsilon\right)$-extremal basis and let us write
$L_{z}=\sum_{i=1}^{n}b_{i}L_{i}$ and $L_{w}=\sum_{i=1}^{n}b_{i}'L_{i}$,
$b_{i},\, b_{i}'\in\mathbb{C}$. If $\left(\xi_{i}\right)_{1\leq i\leq n}$
is the $\left(z,\varepsilon\right)$-adapted coordinate system let
us write $L_{j}(.)=\sum_{i}a_{j}^{i}(.)\frac{\partial}{\partial\xi_{i}}$,
with $a_{j}^{i}(z)=\delta_{i}^{j}$ so that\[
L_{z}(z)=\sum_{i=1}^{n}b_{i}\frac{\partial}{\partial\xi_{i}}\mbox{ and }L_{w}(w)=\sum_{i=1}^{n}\left(\sum_{j=1}^{n}b_{j}'a_{j}^{i}(w)\right)\frac{\partial}{\partial\xi_{i}}.\]

In \cite{CD08} we proved (Proposition 3.6) that,if $w\in P_{\varepsilon}(z)$,\[
\left|a_{j}^{i}(w)\right|\lesssim F_{j}^{\nicefrac{1}{2}}(z,\varepsilon)F_{i}^{-\nicefrac{1}{2}}(z,\varepsilon).\]

The basis $\left(L_{i}\right)$ being $\left(z,\varepsilon\right)$-extremal
we have immediately that $F(L_{z},z,\varepsilon)\simeq F(L_{w},z,\varepsilon)$,
and, using another time Proposition 3.6 of \cite{CD08}, that $F(L_{w},z,\varepsilon)\simeq F(L_{w},w,\varepsilon)$
finishing the proof of the Lemma.
\end{proof}

The proof of the Theorems is done in three steps.

\medskip{}

\emph{First step: definition of a family of polydiscs.} As the polydiscs
$P_{\varepsilon}(z)$ are associated to a pseudodistance, there exists
a constant $M$ such that, for $\delta$ and $\varepsilon$ sufficiently
small, there exists points $z_{i}$, $1\leq i\leq n(\delta,\varepsilon)$
belonging to the set $\left\{ \rho=-\delta\right\} $ such that, if
$S_{\delta,\varepsilon}(z)=S_{\varepsilon}(z)=P_{\varepsilon}(z)\cap\left\{ \rho=-\delta\right\} $
and $S_{\delta,\varepsilon}^{*}(z)=S_{\varepsilon}^{*}(z)=2P_{\varepsilon}(z)\cap\left\{ \rho=-\delta\right\} $
then
\begin{enumerate}
\item $\left\{ S_{\varepsilon}(z_{i})\right\} _{i}$ is a covering of $W\cap\left\{ \rho=-\delta\right\} $;
\item for all $i$, $S_{\varepsilon}^{*}(z_{i})$ is contained in $V\cap\left\{ \rho=-\delta\right\} $;
\item for all $z\in\left\{ \rho=-\delta\right\} $, there exist at most
$M$ index $i$ such that $r\in S_{\varepsilon}^{*}(z_{i})$.
\end{enumerate}

Let $\alpha\in\left]0,1\right[$ be a parameter that will be fixed
later. For $\rho=\alpha^{k}$, $k\geq k_{0}$, let $\left(z_{i}^{k}\right)_{1\leq i\leq n_{k}}$
be a family of points satisfying these properties for $\delta=\alpha^{k}$
and $\varepsilon=\nicefrac{\alpha^{k}}{2}$. The family $\bigcup_{k\geq k_{0}}\left\{ z_{i}^{k},\,1\leq i\leq n_{k}\right\} $
will be denoted by $\left(z_{i}\right)_{i}$.

For $z$ close enough to $\partial\Omega$ and $\delta>0$ sufficiently
small, let $\pi_{\delta}(z)$ denote the point where the integral
curve of $\nabla\rho$ passing through $z$ meets the set $\left\{ \rho=-\delta\right\} $.
$V$ being small enough, $\pi_{\delta}$ is $\mathcal{C}^{\infty}$
in $V$ and any finite number of derivatives of $\pi_{\delta}$ is
bounded independently of $\delta$.

For all $z_{j}$ belonging to $\left(z_{i}\right)_{i}$, if $-\rho(z_{j})=\alpha^{k}$
we set $S_{j}=S_{\nicefrac{\alpha^{k}}{2}}(z_{j})$, $S_{j}^{*}=S_{\nicefrac{\alpha^{k}}{2}}^{*}(z_{j})$,
\[
Q_{j}=\left\{ w\in\Omega\mbox{ such that }\alpha^{k}\geq-\rho(w)\geq\alpha^{k+1},\,\pi_{\delta}(w)\in S_{j}\right\} ,\]
and\[
Q_{j}^{*}=\left\{ w\in\Omega\mbox{ such that }\alpha^{k-1}\geq-\rho(w)\geq\alpha^{k+2},\,\pi_{\delta}(w)\in S_{j}^{*}\right\} .\]

We suppose $k_{0}$ chosen sufficiently large so that $\bigcup_{j}Q_{j}^{*}\subset V$.
Moreover, note that there exist at most $4M$ index $j$ such that
$z\in Q_{j}^{*}$.

\medskip{}

\emph{Second step: some estimates related to the radius $\tau_{i}$.}
\begin{sllem}
\label{lem|lemma-1.1}Let $z\in V\cap\overline{\Omega}$, $v\in\mathbb{C}^{n}$,
$v\neq0$, and $m$ an integer larger than the type of $p_{0}$.
\begin{enumerate}
\item If $\varepsilon_{1}\geq\varepsilon_{2}$, $\left(\frac{\varepsilon_{1}}{\varepsilon_{2}}\right)^{\nicefrac{1}{2}}\tau\left(z,v,\varepsilon_{2}\right)\gtrsim\tau\left(z,v,\varepsilon_{1}\right)\gtrsim\left(\frac{\varepsilon_{1}}{\varepsilon_{2}}\right)^{\nicefrac{1}{m}}\tau\left(z,v,\varepsilon_{2}\right).$
\item If $v=\sum_{i=1}^{n}w_{i}L_{i}(z)$, where $\left(L_{i},\,1\leq i\leq n\right)$
is the $\left(z,\varepsilon\right)$-extremal basis, then\[
\sum_{i=1}^{n}\left|w_{i}\right|k\left(z,v_{i},\varepsilon\right)\simeq k\left(z,v,\varepsilon\right).\]

\end{enumerate}
\end{sllem}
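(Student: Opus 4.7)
The plan is to prove each part by direct computation from the explicit expression for the weight $F(L,z,\varepsilon)$ recalled at the beginning of this section.

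For part~(1), I would exploit the fact that every summand of $F$ is a pure monomial in $\varepsilon$. Writing $L=L_{\tau}+a_{n}N$, one has
\[
F(L,z,\varepsilon)=\sum_{\mathcal{L}\in\mathcal{L}_{M}(L)}\bigl|\mathcal{L}(\partial\rho)(z)\bigr|^{2/|\mathcal{L}|}\,\varepsilon^{-2/|\mathcal{L}|}+|a_{n}|^{2}\varepsilon^{-2},
\]
with coefficients independent of $\varepsilon$ and with $|\mathcal{L}|\in\{2,\dots,M\}$. Setting $t=\varepsilon_{1}/\varepsilon_{2}\geq1$, each commutator term rescales by the factor $t^{-2/|\mathcal{L}|}\in[t^{-1},t^{-2/M}]$ and the normal term by $t^{-2}$. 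The upper endpoint of this range gives $F(L,z,\varepsilon_{1})\leq t^{-2/M}F(L,z,\varepsilon_{2})$, hence, after applying $(\cdot)^{-1/2}$ and choosing $m\geq M$, the claimed lower bound on $\tau$. The upper bound on $\tau$ follows symmetrically from $t^{-2/|\mathcal{L}|}\geq t^{-1}$ (valid whenever $|\mathcal{L}|\geq2$), applied to the commutator part of $F$, which governs $\tau$ outside the pure normal direction; the pure normal direction is handled directly since $\tau(N,z,\varepsilon)=\varepsilon$.

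For part~(2), I would split into the two directions. The inequality $k(z,v,\varepsilon)\lesssim\sum_{i}|w_{i}|k(z,v_{i},\varepsilon)$ is equivalent, after dividing by $\delta_{\partial\Omega}(z)$, to $F(L_{v},z,\varepsilon)^{1/2}\lesssim\sum|w_{i}|F(L_{i},z,\varepsilon)^{1/2}$. Substituting $L_{v}=\sum w_{i}L_{i}$ in each commutator list $\mathcal{L}(\partial\rho)(z)$, expanding multilinearly, bounding the mixed-commutator coefficients by geometric means of the diagonal ones via the structural estimates of the extremal basis from \cite{CD08}, and applying the subadditivity of $x\mapsto x^{1/2}$ leads to the desired bound.

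For the reverse direction $\sum_{i}|w_{i}|k(z,v_{i},\varepsilon)\lesssim k(z,v,\varepsilon)$, the extremality of $(L_{i})$ is essential. By construction $L_{i}$ minimises $F^{1/2}$ among vector fields in $E^{0}$ orthogonal to $L_{1},\dots,L_{i-1}$, and, combined with the equivalence $F(L_{j},z,\varepsilon)\simeq\tau_{j}(z,\varepsilon)^{-2}$ established in \cite{CD08}, this translates into the per-component bound $|w_{i}|F(L_{i},z,\varepsilon)^{1/2}\lesssim F(L_{v},z,\varepsilon)^{1/2}$; summing over $i\in\{1,\dots,n\}$ concludes. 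The main obstacle will be precisely this per-component bound: converting the abstract minimality of the extremal basis into a quantitative estimate on each coordinate $w_{i}$ forces one to track mixed-commutator terms carefully, and this is exactly the point where the vector-field definition of the weights $F$ used here departs from the coordinate expressions of $\rho$ that made the analogous lemma in \cite{Bruna-Charp-Dupain-Annals} essentially automatic.
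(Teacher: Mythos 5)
The paper states this lemma without proof, regarding both parts as consequences of the definition of the weights $F$ and of the structural theory of extremal bases in \cite{CD08} (in the lineally convex case they reduce to properties \eqref{geometry-3} and \eqref{geometry-5} of \prettyref{sec|Geometry-of-lineally-convex}); your direct-computation approach is the right instinct, but it has two gaps. For part (1), your $\gtrsim(\varepsilon_1/\varepsilon_2)^{1/m}$ estimate on $\tau$ is correct, since every summand of $F$ rescales by a power of $t=\varepsilon_1/\varepsilon_2$ lying in $[t^{-2},t^{-2/M}]$, whence $F(L,z,\varepsilon_1)\leq t^{-2/M}F(L,z,\varepsilon_2)$. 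But your $\lesssim t^{1/2}$ upper bound does not follow from the argument given: you would need $F(L,z,\varepsilon_1)\gtrsim t^{-1}F(L,z,\varepsilon_2)$, yet the normal term $|a_n|^2/\varepsilon^2$ rescales by $t^{-2}$, strictly worse than $t^{-1}$. Setting aside ``the pure normal direction'' does not repair this for mixed vectors: whenever $a_n\neq0$, the normal term dominates all the commutator terms (which scale at most like $\varepsilon^{-1}$) once $\varepsilon$ is small enough, and in that regime $\tau(z,v,\varepsilon)\simeq\varepsilon/|a_n|$ scales linearly in $\varepsilon$, not like $\varepsilon^{1/2}$. Note that property \eqref{geometry-5} in \prettyref{sec|Geometry-of-lineally-convex} is deliberately stated only for $j>1$, so your argument must either restrict to tangential directions explicitly or explain why the normal contribution is harmless in the lemma's applications.

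For part (2), the forward inequality $k(z,v,\varepsilon)\lesssim\sum_i|w_i|k(z,v_i,\varepsilon)$ is plausibly sketched, but the reverse inequality is the substantive content of the lemma, and you flag it yourself as ``the main obstacle'' without closing it. That is a genuine gap, not a technicality to note in passing: this inequality is precisely the quantitative expression of what it means for the basis $(L_i)$ to be extremal, and it is what \cite{CD08} establishes. The correct move is to cite that result (or its lineally convex form, property \eqref{geometry-3} of \prettyref{sec|Geometry-of-lineally-convex}) rather than attempt a re-derivation by multilinear expansion that the sketch does not carry out.
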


\begin{sllem}
If $z=\pi_{\delta}(w)$, then
\begin{enumerate}
\item if $\delta_{\partial\Omega}(w)\leq\delta_{\partial\Omega}(z)$, $\tau(z,v)\left(\frac{\delta_{\partial\Omega}(w)}{\delta_{\partial\Omega}(z)}\right)^{\nicefrac{1}{2}}\lesssim\tau(w,v)\lesssim\tau(z,v)\left(\frac{\delta_{\partial\Omega}(w)}{\delta_{\partial\Omega}(z)}\right)^{\nicefrac{1}{m}}$,
\item if $\delta_{\partial\Omega}(w)\geq\delta_{\partial\Omega}(z)$, $\tau(z,v)\left(\frac{\delta_{\partial\Omega}(w)}{\delta_{\partial\Omega}(z)}\right)^{\nicefrac{1}{m}}\lesssim\tau(w,v)\lesssim\tau(z,v)\left(\frac{\delta_{\partial\Omega}(w)}{\delta_{\partial\Omega}(z)}\right)^{\nicefrac{1}{2}}$.
\end{enumerate}
\end{sllem}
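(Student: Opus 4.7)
The plan is a two-step comparison: first shift the base point from $w$ to $z$ (or vice versa) at a common value of the scale parameter $\varepsilon$ using \prettyref{lem|lemma-1.2}, then use \prettyref{lem|lemma-1.1}(1) at that fixed point to pass between the scales $\delta_w:=\delta_{\partial\Omega}(w)$ and $\delta_z:=\delta_{\partial\Omega}(z)$. The geometric input making this go through is that $z=\pi_\delta(w)$ lies on the integral curve of $\nabla\rho$ through $w$, so $|w-z|\simeq|\delta_w-\delta_z|$ and the displacement points essentially along the complex normal $N$, whose extremal radius satisfies $\tau(\cdot,N,\varepsilon)\simeq\varepsilon$.

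For (1), where $\delta_w\leq\delta_z$, I note that $|w-z|\lesssim\delta_z$, so since the normal side of the extremal polydisc $P_{\delta_z}(z)$ is of size $\simeq\delta_z$, we have $w\in CP_{\delta_z}(z)$ for a uniform $C$; absorbing $C$ via \prettyref{eq|polydiscs-pseudodistance} and invoking \prettyref{lem|lemma-1.2} at scale $\varepsilon=\delta_z$ yields
\[
\tau(w,v,\delta_z)\simeq\tau(z,v,\delta_z)=\tau(z,v).
\]
Then \prettyref{lem|lemma-1.1}(1) applied at the point $w$ with $\varepsilon_1=\delta_z\geq\delta_w=\varepsilon_2$ gives
\[
\Bigl(\tfrac{\delta_w}{\delta_z}\Bigr)^{\nicefrac{1}{2}}\tau(w,v,\delta_z)\lesssim\tau(w,v)\lesssim\Bigl(\tfrac{\delta_w}{\delta_z}\Bigr)^{\nicefrac{1}{m}}\tau(w,v,\delta_z),
\]
and combining with the equivalence above produces exactly the inequalities of (1).

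Part (2) is entirely symmetric: now $|w-z|\lesssim\delta_w$, so $z\in CP_{\delta_w}(w)$ and \prettyref{lem|lemma-1.2} at scale $\delta_w$ gives $\tau(z,v,\delta_w)\simeq\tau(w,v)$; applying \prettyref{lem|lemma-1.1}(1) at $z$ with $\varepsilon_1=\delta_w\geq\delta_z=\varepsilon_2$ then compares $\tau(z,v,\delta_w)$ to $\tau(z,v)$ with factors $(\delta_w/\delta_z)^{\nicefrac{1}{m}}$ below and $(\delta_w/\delta_z)^{\nicefrac{1}{2}}$ above, from which (2) follows. The only point requiring care is the polydisc containment feeding \prettyref{lem|lemma-1.2}; once one verifies that an essentially normal displacement of size $|\delta_w-\delta_z|$ stays inside a uniform dilate of the extremal polydisc at the larger of the two boundary distances, the remainder is purely algebraic.
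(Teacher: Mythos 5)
Your proposal is correct and the overall strategy matches the paper's: both arguments first reduce the assertion, via \prettyref{lem|lemma-1.1}(1), to a base--point comparison at a single common scale (the paper uses $\nicefrac{\delta_{\partial\Omega}(z)}{2}$, you use $\delta_{\partial\Omega}(z)$; this is immaterial), and both observe that case (2) is case (1) with the roles of $z$ and $w$ swapped. The one place you diverge is in how the fixed-scale equivalence $\tau\left(z,v,\delta_{\partial\Omega}(z)\right)\simeq\tau\left(w,v,\delta_{\partial\Omega}(z)\right)$ is proved: the paper argues directly with the weights, noting that the vector fields $L_z,L_w\in E^1$ satisfying $L_z(z)=L_w(w)=v$ differ by $\mathrm{O}\left(\delta_{\partial\Omega}(z)\right)$, so the $F$-weights differ by $\mathrm{O}(1)$ both under the change $L_z\to L_w$ and under the change of base point $z\to w$; whereas you invoke \prettyref{lem|lemma-1.2}, which requires first checking the polydisc containment $w\in CP_{\delta_{\partial\Omega}(z)}(z)$. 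Your containment argument is sound (the displacement is along the integral curve of $\nabla\rho$, hence essentially complex-normal and of length $\simeq\left|\delta_{\partial\Omega}(w)-\delta_{\partial\Omega}(z)\right|\le\delta_{\partial\Omega}(z)$, which fits inside a uniform dilate of the polydisc since $\tau\left(z,N,\delta_{\partial\Omega}(z)\right)\simeq\delta_{\partial\Omega}(z)$ and the tangential radii are $\gtrsim\delta_{\partial\Omega}(z)^{\nicefrac{1}{2}}\ge\delta_{\partial\Omega}(z)$), and since \prettyref{lem|lemma-1.2} itself is established via the same Proposition 3.6 of \cite{CD08}, the two routes are in substance equivalent; the paper's direct computation just avoids the containment check. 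One small inaccuracy worth flagging: the equation \prettyref{eq|polydiscs-pseudodistance} you cite for absorbing the dilation constant belongs to the lineally convex setting of Section \ref{sec|resolution-d-bar-equation}, not the geometrically separated setting of Section \ref{sec|Nonisotropic-estimates-of-positive-currents}; the correct reference is the pseudo-distance property of the polydiscs asserted for the $\left(z,\varepsilon\right)$-adapted coordinate system together with \prettyref{lem|lemma-1.1}(1).
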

\begin{proof}
We only need to prove 1., and, by \prettyref{lem|lemma-1.1} (1),
it is enough to prove that\[
\tau\left(z,v,\nicefrac{\delta_{\partial\Omega}(z)}{2}\right)\simeq\tau\left(w,v,\nicefrac{\delta_{\partial\Omega}(z)}{2}\right).\]

This follows the fact that, if $L_{z}=\sum_{i}a_{i}L_{i}^{0}$ and
$L_{w}=\sum_{i}b_{i}L_{i}^{0}$ are so that $L_{z}(z)=L_{w}(w)=v$,
then $L_{z}=L_{w}+\mathrm{O}\left(\delta_{\partial\Omega}(z)\right)$,
and, thus $F\left(L_{z},z,\nicefrac{\delta_{\partial\Omega}(z)}{2}\right)=F\left(L_{w},z,\nicefrac{\delta_{\partial\Omega}(z)}{2}\right)+\mathrm{O}(1)$
and $F\left(L_{w},w,\nicefrac{\delta_{\partial\Omega}(z)}{2}\right)=F\left(L_{w},z,\nicefrac{\delta_{\partial\Omega}(z)}{2}\right)+\mathrm{O}(1)$.\end{proof}
\begin{sllem}
If $\left(v_{i}\right)_{1\leq i\leq n}$ is a $\left(z_{j},\nicefrac{\delta_{\partial\Omega}(z_{j})}{2}\right)$-extremal
basis then, for $w\in S_{j}^{*}$, $\sup_{u,v}\frac{\left|\Theta(w)(u,v)\right|}{k(z_{j},u)k(z_{j},v)}\simeq\sum_{l=1}^{n}\frac{\Theta(w)(v_{l},v_{l})}{k(z_{j},v_{l})^{2}}$.\end{sllem}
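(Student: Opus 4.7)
The plan is a straightforward Cauchy–Schwarz argument exploiting the positivity of $\Theta(w)$ and the comparison of norms provided by \prettyref{lem|lemma-1.1}(2); the point $w\in S_j^*$ only enters to guarantee that $\Theta(w)$ is a well-defined positive Hermitian form.

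For the easy inequality $\gtrsim$, I would simply test the supremum on the diagonal pairs $(u,v)=(v_l,v_l)$ for each $l=1,\ldots,n$, which yields
\[
\frac{\Theta(w)(v_l,v_l)}{k(z_j,v_l)^2}\leq \sup_{u,v}\frac{|\Theta(w)(u,v)|}{k(z_j,u)k(z_j,v)},
\]
since $\Theta(w)(v_l,v_l)\geq 0$ by positivity. Summing over $l$ (at the cost of a factor $n$) produces the desired bound of the sum by the supremum.

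For the opposite inequality, I would fix $u\in\mathbb{C}^n$, decompose $u=\sum_i u_i v_i$ in the extremal basis, and use bilinearity together with the Cauchy–Schwarz inequality for the positive Hermitian form $\Theta(w)$:
\[
\Theta(w)(u,u)=\sum_{i,j}u_i\overline{u_j}\Theta(w)(v_i,v_j)\leq \Bigl(\sum_i|u_i|\,\Theta(w)(v_i,v_i)^{1/2}\Bigr)^2.
\]
Then I would factor $k(z_j,v_i)$ in and out and bound each ratio by the full sum:
\[
\sum_i|u_i|\,\Theta(w)(v_i,v_i)^{1/2}=\sum_i|u_i|k(z_j,v_i)\cdot\frac{\Theta(w)(v_i,v_i)^{1/2}}{k(z_j,v_i)}\leq\Bigl(\sum_i|u_i|k(z_j,v_i)\Bigr)\Bigl(\sum_{l}\frac{\Theta(w)(v_l,v_l)}{k(z_j,v_l)^2}\Bigr)^{1/2}.
\]
Applying \prettyref{lem|lemma-1.1}(2) at the point $z_j$ with $\varepsilon=\delta_{\partial\Omega}(z_j)/2$ gives $\sum_i|u_i|k(z_j,v_i)\simeq k(z_j,u)$, hence
\[
\Theta(w)(u,u)\lesssim k(z_j,u)^2\sum_{l}\frac{\Theta(w)(v_l,v_l)}{k(z_j,v_l)^2}.
\]
The same estimate holds for $v$, so a final Cauchy–Schwarz $|\Theta(w)(u,v)|\leq\Theta(w)(u,u)^{1/2}\Theta(w)(v,v)^{1/2}$ gives
\[
|\Theta(w)(u,v)|\lesssim k(z_j,u)k(z_j,v)\sum_{l}\frac{\Theta(w)(v_l,v_l)}{k(z_j,v_l)^2},
\]
which after dividing and taking the supremum over $u,v$ yields the $\lesssim$ direction.

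There is no real obstacle: this is essentially a linear-algebra statement about positive $(1,1)$-forms, and the only non-trivial input is the equivalence $\sum_i|u_i|k(z_j,v_i)\simeq k(z_j,u)$ supplied by \prettyref{lem|lemma-1.1}(2), which is the precise point where the extremality of the basis $(v_i)$ is used. The hypothesis $w\in S_j^*$ does not enter the algebraic manipulation; it merely ensures that the form is evaluated at a point of the domain of definition of $\Theta$ and will be needed later when one transfers the estimate from $z_j$ to $w$ via \prettyref{lem|lemma-1.2} in the global integration step.
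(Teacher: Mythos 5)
Your argument is correct and is exactly the route the paper takes: the paper's one-line proof says the result follows from its Lemma (the one you cite as \prettyref{lem|lemma-1.1}(2)) and the Cauchy--Schwarz inequality, which is precisely the combination you use. The only cosmetic point is that the bound $\sum_i a_i b_i\leq\bigl(\sum_i a_i\bigr)\bigl(\sum_l b_l^2\bigr)^{1/2}$ you invoke mid-proof is a $\max$/H\"older-type bound rather than Cauchy--Schwarz proper, but it is valid and the conclusion is unaffected.
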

\begin{proof}
The proof is exactly the same as in \cite{Bruna-Charp-Dupain-Annals}.
It follows \prettyref{lem|lemma-1.1} (2) and Cauchy-Schwarz inequality.\end{proof}
\begin{sllem}
\label{lem|lemma-1.5}Let $w$ be a point in $Q_{j}^{*}$ and $\left(\tilde{w}_{i}\right)_{i}$
be the coordinates of $\pi_{-\rho(z_{j})}(w)$ in the $\left(z_{j},\nicefrac{\delta_{\partial\Omega}(z_{j})}{2}\right)$-adapted
coordinate system $\left(z_{j}^{i}\right)_{i}$. Then\[
\left|\frac{\partial\tilde{w}_{r}}{\partial z_{j}^{l}}(w)\right|\lesssim\frac{\tau(z_{j},v_{r})}{\tau(z_{j}v_{l})},\, r\leq n-1,\, l\leq n,\]
where $\left(v_{i}\right)_{1\leq i\leq n}$ is $\left(z_{j},\nicefrac{\delta_{\partial\Omega}(z_{j})}{2}\right)$-extremal,
and\[
\left|\frac{\partial\Im\mathrm{m}(\tilde{w}_{n})}{\partial z_{j}^{l}}\right|\lesssim\frac{\delta_{\partial\Omega}(z_{j})}{\tau(z_{j},v_{l})},\]
the constants depending on $\Omega$ and $\alpha$.\end{sllem}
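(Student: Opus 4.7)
The plan is to parametrize the projection $\pi = \pi_{-\rho(z_j)}$ explicitly, then read off the derivative bounds by combining the (standard) smoothness of the gradient flow with the anisotropic scaling built into the adapted coordinate system. Concretely, if $\gamma_w(s)$ denotes the integral curve of the Euclidean gradient $\nabla \rho$ with $\gamma_w(0)=w$, and $t(w)$ is the unique small time with $\rho(\gamma_w(t(w)))=\rho(z_j)$, then $\tilde w=\gamma_w(t(w))$ and, taking $r$-th coordinates in the $(z_j,\delta_{\partial\Omega}(z_j)/2)$-adapted system,
\begin{equation*}
\tilde{w}^{\,r}-w^{r}=\int_{0}^{t(w)}[\nabla\rho]^{r}(\gamma_{w}(s))\,ds,
\end{equation*}
where $[\nabla\rho]^{r}$ stands for the $r$-th adapted component of $\nabla\rho$. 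I would first verify, from the fact that $\nabla\rho$ is a smooth non-vanishing vector field and using the uniform non-degeneracy of the foliation on $Q_j^{*}$, that $\pi$ is $\mathcal{C}^{\infty}$ with all its Euclidean derivatives bounded uniformly in $j$, and that $|t(w)|\lesssim\delta_{\partial\Omega}(z_j)$.

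Next I would control the adapted components $[\nabla\rho]^{r}$ on $P(z_j,\varepsilon)$, $\varepsilon\simeq\delta_{\partial\Omega}(z_j)$. Since the $\left(z_j,\varepsilon\right)$-extremal basis aligns with the adapted directions at the origin and the complex normal is $v_{n}$, property \eqref{geometry-1} of \prettyref{sec|Geometry-of-lineally-convex} (applied to $\rho$ in the adapted system) yields $|[\nabla\rho]^{r}(w)|\lesssim\varepsilon/\tau_{r}(z_j,\varepsilon)$ for $r\leq n-1$ and for the imaginary part of the $n$-th component, while the real part of the $n$-th component remains of order one. Inserting these estimates into the integral expression gives, to leading order, $|\tilde{w}^{\,r}-w^{r}|\lesssim\varepsilon^{2}/\tau_{r}$ for $r\leq n-1$ and an analogous bound for $|\Im(\tilde{w}^{n})-\Im(w^{n})|$, with $\Re(\tilde{w}^{n})$ being pinned by the level-set equation.

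Now I would differentiate the integral representation with respect to $z_{j}^{l}$. For the term coming from $w^{r}$ itself the answer is $\delta_{l}^{r}\leq\tau_{r}/\tau_{l}$. For the remainder there are two contributions: a boundary term at $s=t(w)$ multiplied by $\partial t(w)/\partial z_{j}^{l}$ and an interior integrand whose $l$-derivative is handled by chain-ruling through $\gamma_{w}(s)$. The derivative $\partial t(w)/\partial z_{j}^{l}$ is obtained by implicit differentiation of $\rho(\gamma_{w}(t(w)))=\rho(z_j)$, which, because $\partial\rho/\partial z_{j}^{n}\simeq 1$ near the origin and $|\partial\rho/\partial z_{j}^{l}|\lesssim\varepsilon/\tau_{l}$ for $l<n$, gives $|\partial t(w)/\partial z_{j}^{l}|\lesssim 1/\tau_{l}$ for $l<n$ and $\lesssim 1$ for $l=n$. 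Multiplying by the pointwise bound $\varepsilon/\tau_{r}$ on $[\nabla\rho]^{r}$ reproduces the target estimate $\lesssim\tau_{r}/\tau_{l}$, after using $\tau_{r}\gtrsim\varepsilon^{1/m}$ when needed and $\tau_{n}=\varepsilon$ for the $\Im(\tilde{w}^{n})$ statement.

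The main obstacle is the bookkeeping in the last step: one must check that every one of the cross-terms produced by chain-ruling (including the variation of $\gamma_{w}(s)$ with respect to $w$, treated by Gronwall-type comparison of flows) is of the desired size $\tau_{r}/\tau_{l}$ rather than the loose $\sum_{i}\tau_{i}/\tau_{l}$ one would get by ignoring the alignment of $\nabla\rho$ with the normal direction. The key algebraic input that makes this work is precisely the fact that $[\nabla\rho]^{r}$ is small (of size $\varepsilon/\tau_{r}$) for each tangential $r$, together with the Jacobian estimates between $(\partial/\partial z_{j}^{l})$ and the extremal basis used already in the proof of \prettyref{lem|lemma-1.2}.
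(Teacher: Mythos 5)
Your route is genuinely different from the paper's. The paper (following \cite{Bruna-Charp-Dupain-Annals}) first reduces to the case $\rho(w)=\rho(z_j)$, then passes to a $\left(w,\nicefrac{\delta_{\partial\Omega}(z_j)}{2}\right)$-extremal basis and invokes \prettyref{lem|lemma-1.2} to transfer the radius estimates between $w$ and $z_j$; the whole normal-transport step is absorbed into the statement that $\pi_\delta$ has uniformly bounded derivatives, and the anisotropic content comes in through the extremal-basis comparison. You instead compute directly from the gradient-flow integral representation, differentiating $t(w)$ implicitly and controlling the flow Jacobian $\partial\gamma^m/\partial w^l$ by a Gronwall argument. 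The two approaches rest on the same ultimate input (the Hessian bounds on $\rho$ from property \eqref{geometry-1}), but yours is more self-contained and computational, while the paper's leans on the structural \prettyref{lem|lemma-1.2}. A word on the bookkeeping you flag as the main obstacle: it does close. The anisotropic ansatz $\bigl|\partial\gamma^m(s)/\partial w^l\bigr|\lesssim\tau_m/\tau_l$ propagates under the variational ODE $\dot E^l_m=H_{ml}+\sum_pH_{mp}E^l_p$, because after rescaling by $\tau_l/\tau_m$ the worst coefficient is $\sim1/\varepsilon$ and you integrate over $|s|\lesssim\varepsilon$, so Gronwall only costs a bounded factor $e^{O(1)}$. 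Feeding this back into the boundary term $\bigl(\partial t/\partial w^l\bigr)[\nabla\rho]^r$ and the interior integral gives $\varepsilon^2/(\tau_r\tau_l)$, which is $\lesssim\tau_r/\tau_l$ (resp. $\delta_{\partial\Omega}/\tau_l$ when $r=n$) precisely because $\tau_r\gtrsim\varepsilon^{1/2}$ for tangential $r$ and $\tau_n\simeq\varepsilon$. Two small corrections to your text: implicit differentiation actually gives the sharper $|\partial t/\partial w^l|\lesssim\varepsilon/\tau_l$ rather than $1/\tau_l$ (though the weaker bound also suffices), and the lower bound you should quote is $\tau_r\gtrsim\varepsilon^{1/2}$ for tangential directions (not $\varepsilon^{1/m}$; the latter happens to be weaker still and coincidentally adequate here, but $\varepsilon^{1/2}$ is the estimate the geometry actually provides). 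Finally, note that this lemma sits in the geometrically separated setting of \prettyref{sec|Nonisotropic-estimates-of-positive-currents}, so the Hessian bounds you cite from \prettyref{sec|Geometry-of-lineally-convex} should be replaced by their counterparts from \cite{CD08} (Proposition 3.6 there), exactly as is done in the proof of \prettyref{lem|lemma-1.2}.
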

\begin{proof}
As in \cite{Bruna-Charp-Dupain-Annals}, the proof is reduced to the
case where $\rho(w)=\rho(z_{j})$, then one consider a $\left(w,\nicefrac{\delta_{\partial\Omega}(z_{j})}{2}\right)$-extremal
basis and uses \prettyref{lem|lemma-1.2}.
\end{proof}

\begin{proof}
[Third step: proof of \prettyref{thm|local-resolution-of-d-equation} and \prettyref{thm|theorem-global-estimate-currents}]Let
$\varphi_{0}$ be a $\mathcal{C}^{\infty}$ function supported in
$\left[-2,2\right]$ and identically equal to $1$ in $\left[-1,1\right]$,
$0\leq\varphi_{0}\leq1$. Let $\Psi=\Psi_{\alpha}$ be a $\mathcal{C}^{1}$
function supported in $\left[\alpha^{2},\alpha^{-1}\right]$ equal
to $1$ on $\left[\alpha,1\right]$, $0\leq\Psi\leq1$.

With the notations of \prettyref{lem|lemma-1.5} we define, for $w\in Q_{j}^{*}$,\[
\varphi(w)=\Psi_{\alpha}\left(\frac{\delta_{\partial\Omega}(w)}{\delta_{\partial\Omega}(z_{j})}\right)\varphi_{0}\left(\frac{\left|\Im\mathrm{m}(\tilde{w}_{n})\right|}{\tau(z_{j},v_{n})}\right)\prod_{i=2}^{n}\varphi_{0}\left(\frac{\left|\tilde{w}_{i}\right|}{\tau(z_{j},v_{i})}\right).\]

Applying Stokes's formula to the form $\left(-\rho\right)^{\nicefrac{2}{m}}\varphi\Theta\wedge\eta_{l}$,
where $\eta_{l}=i^{n-1}dz_{j}^{n}\bigwedge_{\SU{r=1\AS r\neq l}}^{n-1}dz_{j}^{r}\wedge d\overline{z_{j}^{r}}$,
on $\Omega$, using the previous Lemmas and a convenient function
$\Psi_{\alpha}$ (see \cite{Bruna-Charp-Dupain-Annals,Char-Dup-Pise}),
for all $\alpha>0$, the calculus made p. 404-408 in \cite{Bruna-Charp-Dupain-Annals}
leads to the following estimate\begin{eqnarray*}
\int_{Q_{j}}\delta_{\partial\Omega}(w)\left\Vert \Theta(w)\right\Vert _{k}dV(w) & \leq & \left[\beta(\alpha)+\nicefrac{\mathrm{O}(\mathrm{diam}(Q_{j})}{\alpha}+\varepsilon C(\alpha)\right]\int_{Q_{j}^{*}}\delta_{\partial\Omega}(w)\left\Vert \Theta(w)\right\Vert _{k}dV(w)\\
 &  & +C(\varepsilon)C(\alpha)\int_{Q_{j}^{*}}\delta_{\partial\Omega}(w)\left\Vert \Theta(w)\right\Vert _{E}dV(w),\end{eqnarray*}
with $\lim_{\alpha\rightarrow0}\beta(\alpha)=0$.

To finish the proof of the Theorems choose points $p_{i}$, $0\leq i\leq N$
such that the neighborhood $V_{0}(p_{i})$ is a covering of $\partial\Omega$.
Then choosing $\alpha$ small enough so that $\beta(\alpha)MN$ is
small, then $\varepsilon$ small enough so that $\varepsilon C(\alpha)MN$
is small and finally $k_{0}$ large enough so that $\nicefrac{\mathrm{O}(\mathrm{diam}(Q_{j})}{\alpha}$
is small, we get a $\delta_{0}>0$ such that\[
\int_{W\cap\left\{ \delta_{\partial\Omega}(w)\leq\delta_{0}\right\} }\delta_{\partial\Omega}(w)\left\Vert \Theta(w)\right\Vert _{k}dV(w)\lesssim\int_{V}\delta_{\partial\Omega}(w)\left\Vert \Theta(w)\right\Vert _{E}dV(w)\]
and\[
\int_{\Omega\cap\left\{ \delta_{\partial\Omega}(w)\leq\delta_{0}\right\} }\delta_{\partial\Omega}(w)\left\Vert \Theta(w)\right\Vert _{k}dV(w)\lesssim\int_{\Omega}\delta_{\partial\Omega}(w)\left\Vert \Theta(w)\right\Vert _{E}dV(w)\]
which proves the Theorems.
\end{proof}

\section{\label{sec|Non-isotropic-estimates-for-d}Non isotropic estimates
for the $d$-operator for geometrically separated domains}
\begin{stthm}
\label{thm|local-resolution-of-d-equation}Let $\Omega$ be a pseudo-convex
domain in $\mathbb{C}^{n}$ which is geometrically separated at a
boundary point $p_{0}$. Then there exist two neighborhoods of $p_{0}$,
$W\Subset V$ such that there exists a constant $C>0$ such that,
for any smooth closed $\left(1,1\right)$-form $\Theta$ in $V$,
there exists a smooth solution $w$ of the equation $dw=\Theta$ satisfying
the following estimate\[
\int_{W}\left\Vert w\right\Vert _{k}dV\leq C\int_{W}\delta_{\partial\Omega}(z)\left\Vert \Theta(z)\right\Vert _{k}dV(z).\]

\end{stthm}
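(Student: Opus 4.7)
The plan is to construct a primitive $w$ of $\Theta$ that respects the anisotropic geometry of $\Omega$, adapting the method developed by Bruna, Charpentier and Dupain for convex domains (\cite{Bruna-Charp-Dupain-Annals}) to the geometrically separated setting via the extremal vector fields in $E^{0}$ and the projections $\pi_{\delta}$ introduced in \prettyref{sec|Nonisotropic-estimates-of-positive-currents}. As a preliminary step, the Poincar\'e lemma applied to $\Theta$ on the (possibly shrunk) pseudo-convex neighborhood $V$ yields a smooth primitive $w_{0}$ with $dw_{0}=\Theta$; equivalently, using Kohn's weighted $\bar{\partial}$-theory (\cite{Kohn-defining-function}) one solves $\Theta=i\partial\bar{\partial}u$ for a smooth complex function $u$, and sets $w_{0}=-i\partial u$. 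This global primitive will not in general satisfy the anisotropic estimate.

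The core construction replaces $w_{0}$ by local primitives built from integration of $\Theta$ along paths adapted to the geometry. Decompose $W\cap\{\delta_{\partial\Omega}\leq\delta_{0}\}$ using the Carleson-type regions $\{Q_{j}\}$ from the first step of the proof of \prettyref{thm|theorem3.1}, and on each $Q_{j}^{*}$ define $w^{(j)}$ by integrating $\Theta$ along a piecewise path from $z_{j}$ to $z$ that follows successively the integral curve of $\nabla\rho$ (of length $\lesssim\delta_{\partial\Omega}(z_{j})$) and then the flows of the vector fields $L_{i}\in E^{0}$ belonging to the $(z_{j},\delta_{\partial\Omega}(z_{j}))$-extremal basis (each covered portion having length $\lesssim\tau_{i}:=\tau(z_{j},v_{i},\delta_{\partial\Omega}(z_{j}))$). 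Since each $Q_{j}^{*}$ is contained in a polydisc centered at $z_{j}$, these paths remain in $Q_{j}^{*}$ by \prettyref{lem|lemma-1.2}. Patch the $w^{(j)}$ via a partition of unity subordinate to $\{Q_{j}\}$; on overlaps, the discrepancy $w^{(j)}-w^{(l)}$ is a $d$-closed form which is made $d$-exact with sharp anisotropic control by an auxiliary $\bar{\partial}$ problem solved by Kohn's weighted theory, with weights adapted to the polydisc geometry.

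For the pointwise bound, the $L_{i}$-leg of the path contributes to $\|w^{(j)}(z)\|_{k}$ a term dominated by $\tau_{i}\cdot\sup_{Q_{j}^{*}}\|\Theta\|_{k}$, and by \prettyref{lem|lemma-1.2} the weights $k(z,v)$ and $k(z_{j},v)$ are equivalent on $Q_{j}^{*}$. Summing the $n$ tangential contributions and the normal contribution (bounded by $\delta_{\partial\Omega}(z_{j})\cdot\sup\|\Theta\|_{k}$), one obtains $\|w(z)\|_{k}\lesssim\delta_{\partial\Omega}(z_{j})\sup_{Q_{j}^{*}}\|\Theta\|_{k}$. Integrating over $Q_{j}$, using $\delta_{\partial\Omega}(w)\simeq\delta_{\partial\Omega}(z_{j})$ on $Q_{j}^{*}$ and the bounded overlap of $\{Q_{j}^{*}\}$, Fubini yields
\[
\int_{W\cap\{\delta_{\partial\Omega}\leq\delta_{0}\}}\|w\|_{k}dV\lesssim\sum_{j}\int_{Q_{j}^{*}}\delta_{\partial\Omega}(w)\|\Theta(w)\|_{k}dV(w)\lesssim\int_{W}\delta_{\partial\Omega}(w)\|\Theta(w)\|_{k}dV(w).
\]
The complementary region $W\cap\{\delta_{\partial\Omega}>\delta_{0}\}$ is handled trivially since $\delta_{\partial\Omega}$ is bounded below there and $\Theta$ is smooth.

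The main obstacle is the construction and consistent patching of the local primitives $w^{(j)}$ so that the anisotropic estimate is preserved. Unlike the convex case, where the McNeal coordinates are global and straight-line integration immediately gives the correct bounds, here one only has the vector fields $L_{i}\in E^{0}$ whose flows only approximately follow the polydisc edges, and whose commutators with $N$ contribute lower-order terms that must be absorbed using the weight $F$ and properties \textbf{(1)} of \prettyref{lem|lemma-1.1} and \prettyref{lem|lemma-1.2}. Moreover the patching corrections on the overlapping $Q_{j}^{*}$ have to be controlled with respect to the sharp anisotropic norm $\|\cdot\|_{k}$, which forces us to invoke Kohn's weighted $L^{2}$ theory with weights tuned to the polydiscs --- the same mechanism used in \prettyref{sec|resolution-d-bar-equation} for the regular kernel $P(z,\zeta)$.
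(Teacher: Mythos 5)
Your proposal diverges substantially from the paper's proof, and the divergence introduces a genuine gap at the patching step. The paper's argument is much more direct: after shrinking $V$ it chooses a single base point $P$ and a starshaped open set $U\subset\Omega$ containing $(W\cap\Omega)\cup\{P\}$, applies the Poincar\'e homotopy formula $w=\int_{0}^{1}M_{t}^{*}\bigl(i_{Z_{t}}(\Theta)\bigr)dt$ with $M_{t}(z)=tz$ (after translating $P$ to the origin), and then the entire estimate reduces to one comparison of the anisotropic radii along the ray, namely
\[
\tau\left(tz,tv,\delta_{\partial\Omega}(tz)/2\right)\gtrsim\left(\frac{\delta_{\partial\Omega}(tz)}{\delta_{\partial\Omega}(z)}\right)^{\nicefrac{1}{m}}\tau\left(z,v,\delta_{\partial\Omega}(z)/2\right),
\]
which is proved from \prettyref{lem|lemma-1.2} (since $z\in P_{K\delta_{\partial\Omega}(tz)}(tz)$) together with \prettyref{lem|lemma-1.1}. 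There is a single globally defined primitive; no Carleson decomposition, no local primitives, and no patching are needed.

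In your proposal the construction of local primitives $w^{(j)}$ on the $Q_{j}^{*}$ by integrating $\Theta$ along adapted paths is plausible in spirit, but the step where you glue them is where the argument breaks. If $w=\sum_{j}\varphi_{j}w^{(j)}$ with a partition of unity $\{\varphi_{j}\}$, then $dw=\Theta+\sum_{j}d\varphi_{j}\wedge w^{(j)}$ and the error term is a $1$-form, not automatically zero; correcting it requires solving a $d$-equation (or the corresponding \v{C}ech problem for the cocycle $w^{(j)}-w^{(l)}$) with the same sharp anisotropic $L^{1}$ control as the statement itself, which is circular. You propose to handle this with ``Kohn's weighted $\bar\partial$-theory with weights adapted to the polydiscs,'' but Kohn's theory yields $L^{2}$-Sobolev estimates for $\bar\partial$, not $L^{1}$ anisotropic estimates for $d$ on a $1$-form defined on overlaps of Carleson boxes; there is no indication of how such estimates for the corrections would follow, and indeed you flag this yourself as ``the main obstacle'' without resolving it. The paper sidesteps the whole issue by the choice of a single base point and the homogeneity of the ray, so that no gluing occurs. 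If you want to keep your local construction, you would at least need to produce a primitive on each $Q_{j}^{*}$ that agrees on overlaps (for instance by always integrating from the same base point $P$, which is essentially the paper's formula in disguise) rather than hoping to correct arbitrary discrepancies afterward.
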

The proof of this result is quite standard and we will just adapt
the proof made in Section 3 of \cite{Bruna-Charp-Dupain-Annals}.
\begin{proof}
We choose for $V$ a neighborhood contained in the neighborhood $V(p_{0})$
defined in \prettyref{sec|Nonisotropic-estimates-of-positive-currents}
and sufficiently small so that there exists a neighborhood $W\Subset V$
of $p_{0}$ and a point $P\in V$ such that there exists an open set
$U\subset\Omega$ containing $\left(W\cap\Omega\right)\cup\left\{ P\right\} $
which is starshaped with respect to $P$. Solving the equation $dw=\Theta$
in $U$, we can assume, without loss of generality, that the support
of $\Theta$ does not contain $P$. By translation, we can also assume
that $P$ is the origin in $\mathbb{C}^{n}$.

Now, for $z\in W\cap\Omega$, $M_{t}\,:z\mapsto tz$ being the homotopy
between $0$ and $z$, Poincaré's formula\[
w=\int_{0}^{1}M_{t}^{*}\left(i_{z_{t}}(\Theta)\right)dt,\]
where $i_{z_{t}}(\Theta)$ denotes the inner contraction of $\Theta$
with the field $Z_{t}=\frac{z}{t}$ and $M_{t}^{*}$ the pull-back
operator, gives a solution $w$ of the equation $dw=\Theta$ in $W\cap\Omega$.

To finish the proof, we have to estimate this solution, and, for this,
following the calculus made in \cite{Bruna-Charp-Dupain-Annals} p.
409, we only have to verify the following Lemma:
\begin{lem*}
There exists a constant $\alpha>0$ such that, for $0\leq t\leq1$
and $v\in\mathbb{C}^{*}$,\[
\tau\left(tz,tv,\delta_{\partial\Omega}(tz)/2\right)\geq\alpha\left(\frac{\delta_{\partial\Omega}(tz)}{\delta_{\partial\Omega}(z)}\right)^{\nicefrac{1}{m}}\tau\left(z,v,\delta_{\partial\Omega}(z)/2\right).\]
\end{lem*}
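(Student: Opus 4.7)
The plan is to reduce the inequality, step by step, to a combination of the homogeneity of $\tau$ in its second argument, a point-wise comparison of $\tau$ at $z$ and $tz$, and the scale behaviour of $\tau$ dictated by the finite-type exponent.

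First, the map $L\mapsto L(p)$ is a $\mathbb{C}$-linear bijection from $E^{1}$ onto $\mathbb{C}^{n}$ at every $p\in V$, and the weight $F(L,p,\varepsilon)$ is quadratic homogeneous in $L$: each term $|\mathcal{L}(\partial\rho)(p)|^{2/|\mathcal{L}|}$ is a product of $|\mathcal{L}|$ factors of $L_{\tau},\overline{L_{\tau}}$ raised to the power $2/|\mathcal{L}|$, hence of degree $2$ in $L$, and the term $|a_{n}|^{2}/\varepsilon^{2}$ is manifestly quadratic in $L$. Consequently $\tau(p,\lambda w,\varepsilon)=|\lambda|^{-1}\tau(p,w,\varepsilon)$ for $\lambda\in\mathbb{C}^{*}$. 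Applied with $\lambda=t$, this reduces the Lemma to
\[
\tau\bigl(tz,v,\delta_{\partial\Omega}(tz)/2\bigr)\;\geq\;\alpha\,t\,\left(\frac{\delta_{\partial\Omega}(tz)}{\delta_{\partial\Omega}(z)}\right)^{1/m}\tau\bigl(z,v,\delta_{\partial\Omega}(z)/2\bigr).
\]

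Second, I would compare $\tau(\cdot,v,\cdot)$ at the two base points and then bring in the change of scale. Since $U$ is starshaped with respect to the origin, the radial segment $[tz,z]$ remains in $V\cap\overline{\Omega}$ and can be covered by a finite chain of polydiscs $P_{\varepsilon_{j}}(w_{j})$, whose radii $\varepsilon_{j}$ are taken comparable to $\max\bigl(\delta_{\partial\Omega}(tz),\delta_{\partial\Omega}(z)\bigr)/2$. Iterating \prettyref{lem|lemma-1.2} along this chain yields $\tau(tz,v,\varepsilon)\simeq\tau(z,v,\varepsilon)$ for any $\varepsilon$ in this range, with a constant uniform in $z\in W\cap\overline{\Omega}$ and $t\in[0,1]$. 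Then \prettyref{lem|lemma-1.1}~(1) and property~\eqref{geometry-5} of \prettyref{sec|Geometry-of-lineally-convex} convert between the scales $\delta_{\partial\Omega}(tz)/2$ and $\delta_{\partial\Omega}(z)/2$, the $1/m$-exponent in the final factor being precisely the one produced by the finite-type-$2m$ side of the scale inequality. The extra factor $t$ is absorbed into $\alpha$ when $t$ stays away from $0$; for $t$ close to $0$, the point $tz$ lies deep in $\Omega$, so $\delta_{\partial\Omega}(tz)$ is bounded below and $\tau(tz,v,\delta_{\partial\Omega}(tz)/2)$ is of fixed order while the right-hand side is bounded, so the inequality is immediate.

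The main obstacle is the base-point comparison when $z$ and $tz$ are macroscopically separated, which prevents a single application of \prettyref{lem|lemma-1.2} as in the proof of the preceding Lemma (where the two points shared their fiber under $\pi_{\delta}$). The chaining argument above then requires careful control of the cumulative constants, using the uniform smoothness of the fields of $E^{0}$ and of the normal $N$ on $\overline{V}$; together with the correct choice of chain scales so that the finite-type exponent $1/m$ (rather than merely $1/2$) survives in the final estimate, this is where the main technical work lies.
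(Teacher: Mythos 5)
Your overall strategy matches the paper's outline: peel off the scalar $t$ via the quadratic homogeneity of the weight $F$ in the vector field, compare $\tau$ at the two base points $z$ and $tz$ with Lemma~\ref{lem|lemma-1.2}, then convert scales with Lemma~\ref{lem|lemma-1.1}~(1) to produce the exponent $1/m$. The difference, and the genuine gap, is in the base-point comparison. You propose to cover the segment $[tz,z]$ by a finite chain of polydiscs of radius comparable to $\max\bigl(\delta_{\partial\Omega}(tz),\delta_{\partial\Omega}(z)\bigr)/2$ and iterate Lemma~\ref{lem|lemma-1.2} along it, but you leave unaddressed exactly the point on which the argument hinges: a uniform bound on the length of the chain, and hence on the accumulated constant. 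You flag this yourself as ``where the main technical work lies,'' but without that bound there is no proof; a priori the number of links, and thus the constant $C^{N}$, could depend on $z$ and $t$.

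The paper closes this gap with a single observation that makes the chaining unnecessary: because $|z-tz|=(1-t)\,|z|$ and $U$ is starshaped about the origin, there is a constant $K$ (depending only on the constant $c$ in the definition of the polydiscs) such that $z\in P_{K\delta_{\partial\Omega}(tz)}(tz)$ for every $t\in[0,1]$. A single application of Lemma~\ref{lem|lemma-1.2} at the scale $K\delta_{\partial\Omega}(tz)$ then gives $\tau\bigl(tz,tv,K\delta_{\partial\Omega}(tz)\bigr)\simeq\tau\bigl(z,tv,K\delta_{\partial\Omega}(tz)\bigr)$, after which Lemma~\ref{lem|lemma-1.1}~(1) rescales on both sides, the inequality $\tau(z,tv,\cdot)\ge\tau(z,v,\cdot)$ (from $t\le1$ and the homogeneity you noted) removes the $t$, and one lands directly on the stated estimate. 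In particular the case split you make according to whether $t$ is near $0$ is unnecessary: the single-polydisc comparison is uniform in $t$. Had you pushed your own chaining argument through, the bound $|z-tz|\lesssim\delta_{\partial\Omega}(tz)$ needed to control the chain length is precisely the inclusion $z\in P_{K\delta_{\partial\Omega}(tz)}(tz)$, at which point the chain collapses to a single link.
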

\begin{proof}
[Proof of the Lemma]By definition of the polydiscs, there exists
a constant $K$ (depending only on $c$) such that $z\in P_{K\delta_{\partial\Omega}(tz)}(tz)$.
Then, by \prettyref{lem|lemma-1.2}, $\tau\left(tz,tv,K\delta_{\partial\Omega}(tz)\right)\simeq\tau\left(z,tv,K\delta_{\partial\Omega}(tz)\right)$,
and thus $\tau\left(tz,tv,\delta_{\partial\Omega}(tz)/2\right)\simeq_{K}\tau\left(z,tv,\delta_{\partial\Omega}(tz)\right)\geq\tau\left(z,v,\delta_{\partial\Omega}(tz)\right)$,
because $t\leq1$, and the Lemma follows \prettyref{lem|lemma-1.1}.
\end{proof}
\end{proof}

From \prettyref{thm|local-resolution-of-d-equation}
the following global result is easily deduced:
\begin{stthm}
\label{thm|global-resolution-d-equation}Let $\Omega$ be a bounded
pseudo-convex domain in $\mathbb{C}^{n}$ which is geometrically separated
at every point of it's boundary. There exists a constant $C>0$ such
that, for any smooth closed $\left(1,1\right)$-form $\Theta$
in $\Omega$ whose cohomology class in $H^{2}(\Omega,\mathbb{C})$
is $0$, there exists a smooth solution of the equation
$dw=\Theta$ in $\Omega$ satisfying\[|\!|\!|w|\!|\!|_{k}\leq C|\!|\!|\Theta|\!|\!|_{k}.\]
\end{stthm}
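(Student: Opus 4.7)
The plan is to deduce this global statement from the local Theorem \ref{thm|local-resolution-of-d-equation} via two classical ingredients mentioned after the statement: a regularization step which reduces the result for positive currents to smooth forms, and a Čech--de Rham patching step which uses the hypothesis $[\Theta]=0$ in $H^{2}(\Omega,\mathbb{C})$ to glue the local primitives into a global one. Both ingredients are present in the reference \cite{Sko76}; what is specific here is only that the bookkeeping has to be carried out in the anisotropic norm $|\!|\!|\cdot|\!|\!|_{k}$ associated to the geometry of the domain.

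First, I would reduce to $\Theta$ smooth. Convolving $\Theta$ with a smooth positive bump of size $\varepsilon$ produces smooth positive closed $(1,1)$-forms $\Theta_{\varepsilon}$ on $\{\delta_{\partial\Omega}>\varepsilon\}$ with $|\!|\!|\Theta_{\varepsilon}|\!|\!|_{k}\leq C|\!|\!|\Theta|\!|\!|_{k}$, the uniformity of this anisotropic bound following from the quasi-invariance of the weights $\tau\left(z,v,\varepsilon\right)$ recorded in Lemma \ref{lem|lemma-1.2}. If one can produce, for each $\varepsilon$, a solution $w_{\varepsilon}$ of $dw_{\varepsilon}=\Theta_{\varepsilon}$ on a slightly shrunken subdomain with $|\!|\!|w_{\varepsilon}|\!|\!|_{k}\leq C|\!|\!|\Theta_{\varepsilon}|\!|\!|_{k}$, then a weak compactness argument on currents of measure type yields, along a subsequence, a limit $w$ solving $dw=\Theta$ with the claimed bound; this is exactly the argument announced after Theorem \ref{thm|general-d-bar-resolution}.

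Next, with $\Theta$ smooth, I would cover $\overline{\Omega}$ by finitely many $W_{i}\Subset V_{i}$, $0\leq i\leq N$: for $i\geq 1$ the $V_{i}$ are neighborhoods of boundary points $p_{i}$ provided by Theorem \ref{thm|local-resolution-of-d-equation}, while $V_{0}$ is a relatively compact open subset of $\Omega$ on which Poincaré's lemma applies trivially. This gives smooth primitives $w_{i}$ on each $W_{i}$ with $dw_{i}=\Theta$ and, for $i\geq1$, $\int_{W_{i}}\|w_{i}\|_{k}\,dV\lesssim\int_{V_{i}}\delta_{\partial\Omega}\|\Theta\|_{k}\,dV$. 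On each overlap $W_{i}\cap W_{j}$ the form $w_{i}-w_{j}$ is $d$-closed, so $\{w_{i}-w_{j}\}$ is a Čech $1$-cocycle with values in the sheaf of closed smooth $1$-forms. The vanishing of $[\Theta]$ in $H^{2}(\Omega,\mathbb{C})$ translates, after a standard double complex argument, into the fact that this cocycle is a coboundary: one finds smooth $\gamma_{i}$ on $W_{i}$ with $w_{i}-w_{j}=d\gamma_{i}-d\gamma_{j}$ on $W_{i}\cap W_{j}$, and then the forms $w_{i}-d\gamma_{i}$ agree on overlaps and define a single global primitive $w$ of $\Theta$.

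The only delicate point is to keep the anisotropic estimate alive through the patching. Equivalently, one can form $\tilde{w}=\sum_{i}\chi_{i}w_{i}$ with a partition of unity $\{\chi_{i}\}$ subordinate to $\{W_{i}\}$, so that $d\tilde{w}=\Theta+\eta$ with $\eta=\sum_{i}d\chi_{i}\wedge w_{i}$ a smooth, $d$-closed, globally $d$-exact correction supported in the finitely many overlaps; one then has to solve $d\xi=\eta$ with $|\!|\!|\xi|\!|\!|_{k}$ controlled by the $|\!|\!|w_{i}|\!|\!|_{k}$'s on the overlaps, which is done by reapplying Theorem \ref{thm|local-resolution-of-d-equation} to the local pieces of $\eta$. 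The main obstacle throughout is purely bookkeeping: one must check that multiplication by the smooth cutoffs $\chi_{i}$ and $d\chi_{i}$ is bounded on $\|\cdot\|_{k}$ in the relevant directions, which follows from the fact that the covering and the partition of unity are fixed and finite, and from property (\ref{geometry-5}) of \prettyref{sec|Geometry-of-lineally-convex} bounding $\tau_{i}$ from below. Once this is verified, both the regularization and the patching go through exactly as in \cite{Sko76,Bruna-Charp-Dupain-Annals}, and the theorem follows.
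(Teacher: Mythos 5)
Your proposal follows the same route the paper gestures at: it proves nothing beyond citing Skoda for "a standard regularization procedure applied to \prettyref{thm|local-resolution-of-d-equation} and classical topological arguments", and your expansion of that one-liner into regularization plus a \v{C}ech--de~Rham patching (with the cohomological hypothesis killing the constant $2$-cocycle) is exactly the intended argument. Two small inaccuracies are worth flagging. First, your alternative patching route ($\tilde w=\sum\chi_i w_i$, then solve $d\xi=\eta$ "by reapplying \prettyref{thm|local-resolution-of-d-equation} to the local pieces of $\eta$") is circular as stated: re-applying the local theorem to $\eta$ would again require gluing local primitives of $\eta$, producing a new error term of the same type, and you do not explain why the regress terminates; the \v{C}ech argument you give first is the one that actually closes the loop, because there the vanishing of $[\Theta]$ is used once and for all. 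Second, the assertion that "multiplication by the smooth cutoffs $\chi_i$ and $d\chi_i$ is bounded on $\|\cdot\|_k$" is not literally true: $\|d\chi_i\|_k\simeq\sup_v|d\chi_i(v)|\,\tau(z,v,\delta)/\delta$ degenerates like a negative power of $\delta_{\partial\Omega}$ as one approaches $\partial\Omega$ (since $\tau\gtrsim\delta^{1/2}$ while the denominator is $\delta$). What saves the estimate is not boundedness of this multiplication but the compensating weight $\delta_{\partial\Omega}$ already present on the right-hand side of \prettyref{thm|local-resolution-of-d-equation}: one needs $\int\delta\,\|\eta\|_k$, and there the extra $\delta$ absorbs the loss $\tau/\delta$. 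Your invocation of property \eqref{geometry-5} as giving a \emph{lower} bound on $\tau_i$ is not the relevant fact here. With these two points repaired, the argument is the standard one the paper refers to.
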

Here, the $|\!|\!|.|\!|\!|_{k}$ norm of a smooth $\left(1,1\right)$-form
$\Theta$ is defined using the covering $V_{i}$ defined before the statement of
\prettyref{thm|theorem-global-estimate-currents}:
\[|\!|\!|\Theta|\!|\!|_{k}=\sum_{i=0}^N\int_{V_i}\left(\sup_{ u,v\in\mathbb{C}^{*}}\frac{\left|\left\langle \Theta;u,v\right\rangle \right|}{k_{i}(z,u)k_{i}(z,v)}\right)\delta_{\partial\Omega}(z)dV(z),\]
with $k_{0}(z,u)=k_{0}(z,v)=1$.

\section{Characterization of the zero-sets of the functions of the Nevanlinna
class for lineally convex domains of finite type}

It is a well-known fact that if $X$ is the zero-set of a function
$f$ of the Nevanlinna class $N(\Omega)$, then it satisfies the Blaschke
condition: if $X_{k}$ are the irreducible components of $X$ and
$n_{k}$ the multiplicity of $f$ on $X_{k}$, then\[
\sum_{k}n_{k}\int_{X_{k}}\delta_{\partial\Omega}\, d\mu_{k}<+\infty,\]
where $\mu_{k}$ is the Euclidean measure on the regular part of $X_{k}$.
Classically the data $\left\{ X_{k},n_{k}\right\} $ is called a divisor.

If $\Theta$ is the $\left(1,1\right)$-positive current classically
associated to the divisor $\left\{ X_{k},n_{k}\right\} $, this condition is
\[\int_{\Omega}\delta_{\partial\Omega}d\left(\sum_{j,k} \left|\Theta_{j,k}\right|\right)<+\infty.\]
For more details we refer to \cite{Bruna-Charp-Dupain-Annals}.

To prove \prettyref{thm|zeros-Nevalinna}, $\Omega$ satisfying automatically
some topological condition, by a standard regularization process,
it is sufficient to prove that there exists a constant $C>0$ such
that if $\Theta$ is a $\left(1,1\right)$-closed positive current,
$\mathcal{C}^{\infty}$ in $\bar{\Omega}$, there exists a function
$u$ in $\bar{\Omega}$ solution of the equation $i\partial\bar{\partial}u=\Theta$
satisfying the estimate\[
\int_{\partial\Omega}\left|u\right|\, d\sigma\leq C\int_{\Omega}\delta_{\partial\Omega}\left\Vert \Theta\right\Vert _{E}.\]
For details see \cite{Bruna-Charp-Dupain-Annals} and \cite{Sko76}.

As usual, the main part of the proof is done in two steps: resolution
of the $d$-equation and, then, resolution of the $\bar{\partial}$-equation.
The second step is done in \prettyref{sec|resolution-d-bar-equation}.
For the first one we use the results of Sections \ref{sec|Nonisotropic-estimates-of-positive-currents}
and \ref{sec|Non-isotropic-estimates-for-d}. As this two Sections
are written for general geometrically separated domains and in a local
context, we give some precisions (see also \cite{Conrad_lineally_convex}).

\subsection{Non isotropic estimates on closed positive currents in lineally convex
domains}

$\Omega$ being lineally convex of finite type it is completely geometrically
separated (\cite{CD08}). On the other hand, the proof of the Corollary
at the beginning of Section 7.1 of \cite{CD08} shows that the radius
$\tau(z,v,\varepsilon)$ defined for general geometrically separated
domains are equivalents to the ones defined by M. Conrad in \cite{Conrad_lineally_convex}
(and used in \prettyref{sec|resolution-d-bar-equation})

Then, the norm $\left\Vert \Theta(z)\right\Vert _{k}^{\nu}$ used
in \prettyref{thm|theorem-global-estimate-currents} is equivalent
to the norm $\left\Vert \Theta(z)\right\Vert _{k}$ defined with the
radius $\tau_{i}$ of \cite{Conrad_lineally_convex} (and is independent
of $\nu$). Thus, \prettyref{thm|theorem-global-estimate-currents}
means that there exists a constant $C_{1}$ such that\[
\int_{\Omega}\delta_{\partial\Omega}(z)\left\Vert \Theta(z)\right\Vert _{k}dV\leq C_{1}\int_{\Omega}\delta_{\partial\Omega}(z)\left\Vert \Theta(z)\right\Vert _{E}dV.\]

\subsection{Resolution of the $d$ equation in lineally convex domains}

As $\Omega$ is lineally convex we have $H^{2}(\Omega,\mathbb{C})=0$
(see for example \cite{Conrad_lineally_convex}). Thus, by \prettyref{thm|global-resolution-d-equation}
there exists a smooth form $w$ such that $dw=\Theta$ in $\Omega$
and $\int_{\Omega}\left\Vert w(z)\right\Vert _{k}\leq C_{2}\int_{\Omega}\delta_{\partial\Omega}(z)\left\Vert \Theta(z)\right\Vert _{k}dV$.

\section{Remarks}

The method of resolution of the equation $\bar{\partial}u=f$ presented
in \prettyref{sec|resolution-d-bar-equation} can be used to obtain
various other estimates.

For example, the estimates obtained for convex domains of finite type
in \cite{Cumenge-estimates-holder,DFF99,Fis04,Hef04,Ale05,Ale06}
can be proved for lineally convex domains using our method (see also
\cite{Diederich-Fischer_Holder-linally-convex}).

\bibliographystyle{amsalpha}

\providecommand{\bysame}{\leavevmode\hbox to3em{\hrulefill}\thinspace}
\providecommand{\MR}{\relax\ifhmode\unskip\space\fi MR }
\providecommand{\MRhref}[2]{%
  \href{http://www.ams.org/mathscinet-getitem?mr=#1}{#2}
}
\providecommand{\href}[2]{#2}

\end{document}